\newcommand{\dsp}{\displaystyle}
\newcommand{\eps}{\varepsilon}
\newcommand{\om}{\omega}
\newcommand{\Om}{\Omega}
\newcommand{\mrm}[1]{\mathrm{#1}}
\newcommand{\Cplx}{\mathbb{C}}
\newcommand{\N}{\mathbb{N}}
\newcommand{\R}{\mathbb{R}}
\newcommand{\Z}{\mathbb{Z}}
\newcommand{\mL}{\mrm{L}}
\newtheorem{theorem}{Theorem}[section]
\newtheorem{lemma}[theorem]{Lemma}
\newtheorem{remark}[theorem]{Remark}
\newtheorem{proposition}[theorem]{Proposition}
\newtheorem{procedure}[theorem]{Procedure}
\begin{document}

~\vspace{0.0cm}
\begin{center}
{\sc \bf\huge Acoustic passive cloaking using \\[6pt]thin outer resonators}
\end{center}

\begin{center}
\textsc{Lucas Chesnel}$^1$, \textsc{J\'er\'emy Heleine}$^1$, \textsc{Sergei A. Nazarov}$^{2}$\\[16pt]
\begin{minipage}{0.95\textwidth}
{\small
$^1$ INRIA/Centre de math\'ematiques appliqu\'ees, \'Ecole Polytechnique, Institut Polytechnique de Paris, Route de Saclay, 91128 Palaiseau, France;\\
$^2$ Institute of Problems of Mechanical Engineering, Russian Academy of Sciences, V.O., Bolshoj pr., 61, St. Petersburg, 199178, Russia;\\[2pt]

E-mails: \texttt{lucas.chesnel@inria.fr}, \texttt{jeremy.heleine@inria.fr}, \texttt{srgnazarov@yahoo.co.uk} \\[-14pt]
\begin{center}
(\today)
\end{center}
}
\end{minipage}
\end{center}
\vspace{0.4cm}

\noindent\textbf{Abstract.} 
We consider the propagation of acoustic waves in a 2D waveguide unbounded in one direction and containing a compact obstacle. The wavenumber is fixed so that only one mode can propagate. The goal of this work is to propose a method to cloak the obstacle. More precisely, we add to the geometry thin outer resonators of width $\eps$ and we explain how to choose their positions as well as their lengths to get a transmission coefficient approximately equal to one as if there were no obstacle. In the process we also investigate several related problems. In particular, we explain how to get zero transmission and how to design phase shifters. The approach is based on asymptotic analysis in presence of thin resonators. An essential point is that we work around resonance lengths of the resonators. This allows us to obtain effects of order one with geometrical perturbations of width $\eps$. Various numerical experiments illustrate the theory.\\

\noindent\textbf{Key words.} Acoustic waveguide, passive cloaking, asymptotic analysis, thin resonator, scattering coefficients, complex resonance.

\section{Introduction}

In this article, we propose a method to cloak, in some sense that we describe hereafter, an obstacle embedded in an acoustic waveguide. The problem of cloaking an object has a large number of applications and has been the subject of intense studies over the last decade in the theory of waves propagation. In particular, the methods based on anomalous resonances 
\cite{MiNi06,BoSc10,KLSW14,Nguy17,MaSh16} and on transformation optics \cite{PeSS06,Leon06,Norr08,Nguy10,ChCh10,GKLU09b} have encountered a large success which is related to the development of metamaterials. In the present study, we consider a rather academic but universal problem of wave propagation in a waveguide which is unbounded in one direction. This problem also arises in electromagnetism and in water-wave theory in certain configurations. We work in time-harmonic regime and the wavenumber is set so that one mode can propagate in the structure. The scattering of this mode by the obstacle generates a reflection and a transmission characterised by some complex reflection and transmission coefficients (see (\ref{Field1}), (\ref{Field2}) for precise definitions). The goal of this work is to explain how to perturb the initial geometry to obtain a new waveguide where the reflection coefficient is approximately zero and the transmission coefficient is approximately equal to one as in the reference strip without obstacle.\\ 
\newline
We emphasize that our objective is less ambitious than what people try to do when working with transformation optics. In particular, we do not aim at having the same field as if there were no obstacle outside of some cloaking  device. Our goal is simply to get invisibility at infinity with one single propagating mode. The interesting counterpart is that we will be able to achieve it without using metamaterials with unconventional properties which are still hard to produce. Besides, we think that this setting is relevant in many situations because in waveguides, the evanescent part of the field that we neglect is exponentially decaying at infinity and therefore is really difficult to distinguish from noise a few wavelengths far from the obstacle. Actually we do not use particular material but instead play with the geometry of the waveguide. Let us mention also that we do not add active sources in the system as people do in active cloaking \cite{Mill06,VaMO09,VaMO11,NoAP12,OSMMM15,Chee16,CDGG21}. No, what we realize is passive cloaking at infinity by perturbing the shape of the waveguide.\\
\newline
The main difficulty of the problem lies in the fact that the dependence of the scattering coefficients with respect to the geometry is not explicit and not linear. In order to address it, techniques of optimization have been considered. We refer the reader in particular to \cite{AlLS17,LDOHG19,LGHDO19,Lebb19}. However the functionals involved in the analysis are non convex and unsatisfying local minima exist. Moreover, these methods do not allow the user to control the main features of the shape compare to the approach we propose below.\\
\newline
To cloak obstacles, we will work with thin outer resonators of width $\eps$ as depicted for example in Figures \ref{FigCloakElephant}--\ref{FigCloakPenetrable}. We already used similar techniques in \cite{ChNa20Distrib} and \cite{ChHN21} to design respectively energy distributors and mode converters. Thus the present article constitutes somehow the third opus of the trilogy started with \cite{ChNa20Distrib,ChHN21}. In general, due to the geometrical properties of waveguides as in Figures \ref{FigCloakElephant}--\ref{FigCloakPenetrable}, the thin resonators produce only a perturbation of order $\eps$ on the fields and so on the scattering coefficients. However working around the resonance lengths of the resonators (see (\ref{ConditionResonance})), we can get effects of order one. This will be a key property in our approach. Note that this has been studied for example in \cite{Krieg,BoTr10,LiZh17,LiZh18,LiSZ19} in a context close to ours, namely in the study of the scattering of an incident wave by a periodic array of subwavelength slits. The core of our approach is based on an asymptotic expansion of the scattering solutions with respect to $\eps$ as $\eps$ tends to zero. This will allow us to derive formula for the scattering coefficients with a relatively explicit dependence on the geometrical features. To obtain the expansions, we will apply techniques of matched asymptotic expansions. For related methods, we refer the reader to \cite{Beal73,Gady93,KoMM94,Naza96,Gady05,Naza05,JoTo06,BaNa15,BoCN18}. We emphasize that an important feature of our study distinguishing it from the previous references is that the lengths, and not only the widths, of the resonators depend on $\eps$ (see (\ref{DefL})). This way of considering the problem, which was proposed in \cite{NaCh21a,NaCh21b,ChNa20Distrib,ChHN21}, is an essential ingredient to reveal the resonant phenomena. From this perspective, our work shares similarities with \cite{HoSc19,BrHS20,BrSc20} (see also references therein). On the other hand, we will observe in the asymptotic procedure that when $\eps$ tends to zero, everything happens like if punctual sources were located at the junction points between the resonators and the main part of the waveguide (see e.g. (\ref{DefGamma})). Therefore our approach somehow has some connections with works on active cloaking \cite{Mill06,VaMO09,VaMO11,NoAP12,OSMMM15,Chee16,CDGG21}.\\
\newline
The outline is as follows. We describe the setting and the notation in Section \ref{SectionSetting}. Then we explain how to achieve almost zero reflection by working with one thin outer resonator of width $\eps$ in Section \ref{SectionZeroR}. This section is rather long. First, we describe in detail the computation of an asymptotic expansion of the scattering solution with respect to $\eps$. Then we use it to obtain almost zero reflection. Finally, we discuss the particular case of symmetric geometries  and make some digressions concerning zero transmission. In Section \ref{SectionPhaseShift}, we show how to construct phase shifters, that is geometries where the reflection coefficient is approximately zero and the phase of the transmission coefficient can take any value on the unit circle. To proceed, we work in the reference strip with two thin resonators.
In Section \ref{SectionCloaking}, first we gather the results of the two previous sections to explain how to cloak obstacles with three resonators. Then we show that cloaking can be achieved with only two resonators. In Section \ref{SectionConclusion}, we discuss possible extensions and related open questions. Finally, we establish some auxiliary results needed in the analysis in a short appendix. The main results of this work are Procedure \ref{MainProce} (almost zero reflection), Procedure \ref{ProcePhaseShift2} (phase shifter) and Procedure \ref{ProceCloakingTwo} (cloaking with two resonators).

\newpage 
\section{Setting}\label{SectionSetting}

\begin{figure}[!ht]
\centering
\begin{tikzpicture}[scale=2]
\draw[fill=gray!30] (0.3,1) circle (0.4) ;
\draw[fill=gray!30,draw=none](-1.5,0) rectangle (1.5,1);
\draw (-1.5,0)--(1.5,0);
\draw (-1.5,1)--(-0.1,1);
\draw (0.7,1)--(1.5,1);
\draw[fill=white] (-0.5,0.2)--(0.1,0.8)--(0.4,0.3)--cycle;
\draw[dashed] (-1.5,0)--(-2,0);
\draw[dashed] (-1.5,1)--(-2,1);
\draw[dashed] (1.5,0)--(2,0);
\draw[dashed] (1.5,1)--(2,1);
\draw (-0.8,-0.05)--(-0.8,0.05);
\draw (1.3,-0.05)--(1.3,0.05);
\node at (-0.8,-0.2){\small $-d$};
\node at (1.3,-0.2){\small $d$};
\begin{scope}[shift={(-3,0)}]
\draw[->] (0,0.2)--(0.6,0.2);
\draw[->] (0.1,0.1)--(0.1,0.7);
\node at (0.6,0.3){\small $x$};
\node at (0.2,0.7){\small $y$};
\end{scope}
\node at (-1.3,0.1){\small $\Om$};
\end{tikzpicture}
\caption{Examples of geometry with obstacle. \label{DomainOriginal}} 
\end{figure}
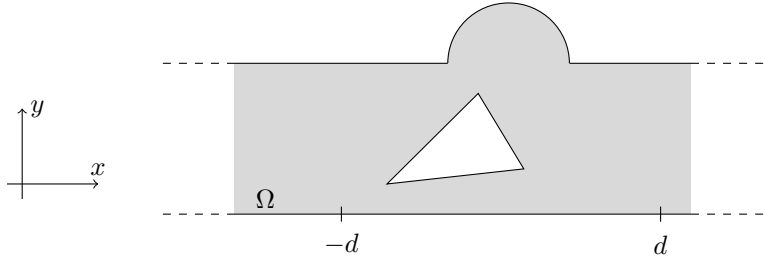

Let $\Om$ be a domain which coincides with the reference strip $S:=\{z=(x,y)\in\R\times(0;1)\}$ outside of a bounded region, say $[-d;d]^2$ for a certain $d>0$ (see Figure \ref{DomainOriginal}). We assume that $\Om$ is connected and that its boundary $\partial\Om$ is Lipschitz. In the sequel we shall often refer to the region where $\Om$ differs from $S$ as an ``obstacle'' though perturbations of the walls as depicted in Figure \ref{DomainOriginal} can be considered as well. Interpreting the domain $\Om$ as an acoustic waveguide, we are led to study the following problem with Neumann Boundary Conditions (BC)
\begin{equation}\label{MainPb}
 \begin{array}{|rcll}
 \Delta u +\omega^2  u&=&0&\mbox{ in }\Omega\\
 \partial_\nu u &=&0 &\mbox{ on }\partial\Omega.
\end{array}
\end{equation}
Here, $\Delta$ is the Laplace operator while $\partial_\nu$ corresponds to the derivative along the exterior normal. Furthermore, $u$ is the acoustic pressure of the medium while $\omega>0$ is the wave number. We fix $\om\in(0;\pi)$ so that only the  modes
\[
\mrm{w}^\pm(x,y)=e^{\pm i\om x}
\]
can propagate. We are interested in the solutions to the diffraction problem \eqref{MainPb} generated by the incoming waves $\mrm{w}^{\pm}$ coming from $\mp\infty$. These scattering solutions admit the decompositions
\begin{equation}\label{Field1}
W^+(x,y)=\begin{array}{|ll}
\mrm{w}^+(x,y)+R_+\mrm{w}^-(x,y)+\dots &\quad\mbox{ for }x<-d\\[3pt]
\phantom{\mrm{w}^+(x,y)+\ \,} T\,\mrm{w}^+(x,y)+\dots &\quad\mbox{ for }x>d\end{array}
\end{equation}
\begin{equation}\label{Field2}
W^-(x,y)=\begin{array}{|ll}
\phantom{\mrm{w}^-(x,y)+\ \,} T\,\mrm{w}^-(x,y)+\dots &\quad\mbox{ for }x<-d\\[3pt]
\mrm{w}^-(x,y)+R_-\mrm{w}^+(x,y)+\dots &\quad\mbox{ for }x>d\end{array}
\end{equation}
where $R_{\pm}\in\mathbb{C}$ are reflection coefficients and $T\in\mathbb{C}$ is a transmission coefficient. Here the ellipsis stand for remainders which decay at infinity with the rate $e^{-(4\pi^2-\om^2)^{1/2}|x|}$. To set ideas, if trapped modes\footnote{We remind the reader that trapped modes are non zero solutions of (\ref{MainPb}) which are in $\mL^2(\Om)$ and therefore which decay exponentially at infinity.} exist for the problem (\ref{MainPb}), which happens only in rare circumstances, $W^{\pm}$ are defined as the functions which are orthogonal to these trapped modes for the inner product of $\mL^2(\Om)$. From $R_{\pm}$, $T$, we form the scattering matrix
\[
\mathbb{S}:=\left(\begin{array}{cc}
R_+ & T \\[2pt]
T & R_-
\end{array}\right)\in\Cplx^{2\times2}.
\]
The matrix $\mathbb{S}$ is symmetric not necessarily hermitian and it is known that it is also unitary: $\mathbb{S}\overline{\mathbb{S}}^{\top}=\mrm{Id}_{2\times 2}$. For a proof of these results with similar notation, we refer the reader for example to \cite{na489} or \cite[Proposition 3.2]{BeBC21}. Here the unitary of $\mathbb{S}$ implies that we have the relations
\begin{equation}\label{RelConsNRJ}
|R_{\pm}|^2+|T|^2=1,\qquad\qquad \overline{R_+}T+\overline{T}R_-=0.
\end{equation}
Above, the identities on the left traduce the conservation of energy. In the analysis below, we will make use of a result which says that the structure of the scattering matrix translates into a structure for the scattering solutions (see \cite[Theorem 2]{na489} or \cite[Proposition 3.4]{BeBC21}). 
\begin{proposition}\label{propositionStructure}
Set $W:=(W^+,W^-)^{\top}$ where $W^{\pm}$ are the scattering solutions introduced in (\ref{Field1}), (\ref{Field2}). Then in $\Om$ we have the identities
\begin{equation}\label{IdentityStructure}
\mathbb{S}\overline{W}=W\qquad\Leftrightarrow\qquad
\left\{\begin{array}{rcl}
R_+\overline{W^+}+T\overline{W^-}&=&W^+ \\[2pt]
T\overline{W^+}+R_-\overline{W^-}&=&W^-.
\end{array}\right.
\end{equation}
\end{proposition}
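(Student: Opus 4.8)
The plan is to exploit the two structural facts recalled just above the statement: the scattering matrix $\mathbb{S}$ is symmetric and unitary, i.e. $\mathbb{S}^\top=\mathbb{S}$ and $\mathbb{S}\overline{\mathbb{S}}^\top=\mrm{Id}$, which together give $\mathbb{S}\overline{\mathbb{S}}=\mrm{Id}$, i.e. $\overline{\mathbb{S}}=\mathbb{S}^{-1}$. The idea is to build, out of the given scattering solutions $W^{\pm}$, the candidate function $V:=\mathbb{S}\overline{W}$, show that $V$ solves the same homogeneous problem \eqref{MainPb}, and then identify $V=W$ by comparing the behaviour at $x\to\pm\infty$ and invoking uniqueness of the scattering solutions (up to trapped modes, handled by the orthogonality normalization stated in the text).

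First I would note that $\overline{W^{\pm}}$ again satisfies $\Delta\overline{W^\pm}+\om^2\overline{W^\pm}=0$ in $\Om$ with $\partial_\nu\overline{W^\pm}=0$ on $\partial\Om$, since $\om^2$ is real and the problem has real coefficients; hence every component of $V=\mathbb{S}\overline W$ does too. Next I would compute the expansion of $V$ at infinity from \eqref{Field1}--\eqref{Field2}. Writing $V^+:=R_+\overline{W^+}+T\,\overline{W^-}$, and using $\overline{\mrm{w}^\pm}=\mrm{w}^\mp$, one gets for $x<-d$
\[
V^+ = R_+\bigl(\mrm{w}^-+\overline{R_+}\,\mrm{w}^++\dots\bigr)+T\bigl(\overline{T}\,\mrm{w}^++\dots\bigr)
= (|R_+|^2+|T|^2)\,\mrm{w}^+ + R_+\,\mrm{w}^- + \dots,
\]
which by the conservation relation \eqref{RelConsNRJ} equals $\mrm{w}^++R_+\mrm{w}^-+\dots$; and for $x>d$,
\[
V^+ = R_+\bigl(\overline{R_+}\,\mrm{w}^++\dots\bigr)+T\bigl(\overline{T}\,\mrm{w}^-+\overline{R_-}\,\mrm{w}^++\dots\bigr)
= (\overline{R_+}T+\overline{T}R_-)\,\dots + |T|^2\,\mrm{w}^+ + \dots
\]
Wait — here one must be slightly careful with which coefficient multiplies which mode; organizing the bookkeeping so that the coefficient of $\mrm{w}^+$ for $x>d$ comes out $|R_+|^2+|T|^2=1$ and the coefficient of $\mrm{w}^-$ comes out $\overline{R_+}T+\overline{T}R_-=0$ is exactly where the second relation in \eqref{RelConsNRJ} is used. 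Thus $V^+$ has the same incoming/outgoing structure as $W^+$, and similarly $V^-$ matches $W^-$; moreover the remainders still decay like $e^{-(4\pi^2-\om^2)^{1/2}|x|}$.

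Then $V^\pm-W^\pm$ is a solution of \eqref{MainPb} that decays exponentially at infinity, i.e. it is either zero or a trapped mode. If no trapped modes exist, we conclude $V=W$ directly. In the exceptional case where trapped modes exist, I would invoke the normalization stated in the text — $W^\pm$ are taken orthogonal in $\mL^2(\Om)$ to all trapped modes — and check that $V^\pm$ inherits the same orthogonality (because $\mathbb{S}$ and complex conjugation preserve the trapped-mode subspace, $\om^2$ being a real eigenvalue with a conjugation-stable eigenspace), so $V^\pm-W^\pm$ is a trapped mode orthogonal to all trapped modes, hence zero. The main obstacle is not any deep analysis but the careful matching of Fourier/mode coefficients at both ends simultaneously: one has to keep straight the roles of $R_+$, $R_-$, $T$ and their conjugates in the four asymptotic regimes and see that precisely the two identities in \eqref{RelConsNRJ} make all the cross terms collapse correctly. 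Once that algebra is laid out cleanly, the equivalence displayed in \eqref{IdentityStructure} between the vector identity $\mathbb{S}\overline W=W$ and its two scalar components is immediate by expanding the matrix product.
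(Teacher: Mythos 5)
Your proposal is correct and follows essentially the same route as the paper's own (two-sentence) proof in the appendix: show that $\mathbb{S}\overline{W}-W$ is a vector of exponentially decaying solutions of \eqref{MainPb}, i.e.\ of trapped modes, using the unitarity relations \eqref{RelConsNRJ} to cancel the propagating parts, and then kill it via the orthogonality normalization of $W^{\pm}$ (your extra check that $\mathbb{S}\overline{W}$ inherits this orthogonality, because conjugation preserves the trapped-mode subspace, is a point the paper leaves implicit). The only slip is in your $x>d$ display: there $\overline{W^+}=\overline{T}\,\mrm{w}^-+\dots$ and $\overline{W^-}=\mrm{w}^++\overline{R_-}\,\mrm{w}^-+\dots$, so the coefficient of $\mrm{w}^+$ in $V^+$ is $T$ (not $|R_+|^2+|T|^2$) and the coefficient of $\mrm{w}^-$ is $R_+\overline{T}+T\overline{R_-}=\overline{\overline{R_+}T+\overline{T}R_-}=0$.
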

\noindent The main goal of this work is to explain how to perturb slightly $\Om$ to get new geometries where the reflection coefficients are approximately equal to zero and the transmission coefficient is approximately equal to one as in the unperturbed strip $S=\R\times(0;1)$. In this case, the initial obstacles in $\Om$ are approximately cloaked, and more precisely made invisible to an observer measuring scattered fields at some distance of them and unable to detect the influence of evanescent components (which is always the case in practice due to the presence of noise). 

\section{Zero reflection with one thin outer resonator}\label{SectionZeroR}

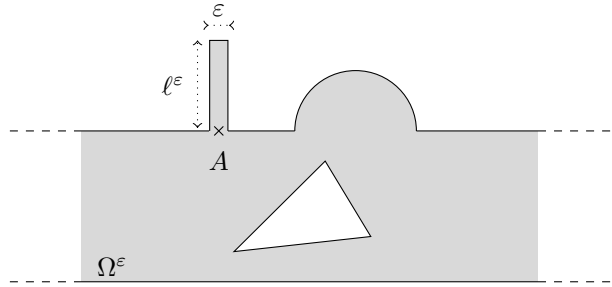
\begin{figure}[!ht]
\centering
\begin{tikzpicture}[scale=2]
\draw[fill=gray!30] (0.3,1) circle (0.4) ;
\draw[fill=gray!30,draw=none](-1.5,0) rectangle (1.5,1);
\draw (-1.5,0)--(1.5,0);
\draw (-1.5,1)--(-0.1,1);
\draw (0.7,1)--(1.5,1);
\draw[dotted,<->] (-0.74,1.02)--(-0.74,1.6);
\draw[dotted,>-<] (-0.7,1.7)--(-0.5,1.7);
\node at (-0.6,1.8){\small $\eps$};
\node at (-0.9,1.3){\small $\ell^\eps$};
\node at (-0.6,0.8){\small $A$};
\draw[fill=gray!30,draw=none](-0.66,0.9) rectangle (-0.54,1.6);
\draw (-0.66,1)--(-0.66,1.6)--(-0.54,1.6)-- (-0.54,1);
\draw[fill=white] (-0.5,0.2)--(0.1,0.8)--(0.4,0.3)--cycle;
\draw[dashed] (-1.5,0)--(-2,0);
\draw[dashed] (-1.5,1)--(-2,1);
\draw[dashed] (1.5,0)--(2,0);
\draw[dashed] (1.5,1)--(2,1);
\draw (-0.63,0.97)--(-0.57,1.03);
\draw (-0.63,1.03)--(-0.57,0.97);
\node at (-1.3,0.1){\small $\Om^{\eps}$};
\end{tikzpicture}
\caption{Geometry with one thin outer resonator. \label{DomainWithOneLigament}} 
\end{figure}

The first step in our approach to achieve invisibility consists in obtaining zero reflection working with one thin outer resonator.
\subsection{Description of the geometry}
For $\eps>0$ small, set 
\begin{equation}\label{DefL}
\ell^{\eps}:=\ell^0+\eps \ell'
\end{equation}
where the values of $\ell^0>0$ and $\ell'\in\R$ will be fixed later to observe interesting phenomena. Then define the thin resonator
\begin{equation}\label{DefOneLigament}
L^{\eps}:=(p-\eps/2;p+\eps/2)\times[1;1+\ell^{\eps})
\end{equation}
for some $p\in\R$. We assume that $\Om$ and $L^{\eps}$ are such that $\overline{\Om}\cap L^{\eps}=(p-\eps/2;p+\eps/2)\times\{1\}$ and we set $A:=(p,1)\subset\partial\Om$ (the junction point, see an illustration with Figure \ref{DomainWithOneLigament}). Note that the length and not only the width of the resonator $L^{\eps}$ depends on $\eps$. We will make more precise this dependence later. Finally, set 
\[
\Om^{\eps}:=\Om\cup L^{\eps}.
\]
As for $\Om$, we assume that the parameters are such that $\Om^{\eps}$ is connected and that its boundary $\partial\Om^{\eps}$ is Lipschitz. Otherwise consider another position for the resonator (let us mention that we could also place the resonator on the inferior wall, that is to take $L^{\eps}$ of the form $L^{\eps}=(p-\eps/2;p+\eps/2)\times(-\ell^{\eps};0]$). Changing the definition of $d>0$ if necessary, we can also assume that $\Om^{\eps}$ coincides with $S=\R\times(0;1)$ outside of $[-d;d]^2$. We consider the problem
\begin{equation}\label{MainPbPerturbed}
 \begin{array}{|rcll}
 \Delta u^{\eps} +\omega^2  u^{\eps}&=&0&\mbox{ in }\Omega^{\eps}\\
 \partial_\nu u^{\eps} &=&0 &\mbox{ on }\partial\Omega^{\eps}.
\end{array}
\end{equation}
The scattering of the wave $\mrm{w}^+$ coming from $-\infty$ leads us to study the solution of (\ref{MainPbPerturbed}) admitting the decomposition
\begin{equation}\label{decompoMainTermPerturbed}
u^{\eps}(x,y)=\begin{array}{|ll}
\mrm{w}^+(x,y)+R^{\eps}_+\mrm{w}^-(x,y)+\dots &\quad\mbox{ for }x<-d\\[3pt]
\phantom{\mrm{w}^+(x,y)+\ \,} T^{\eps}\,\mrm{w}^+(x,y)+\dots &\quad\mbox{ for }x>d.\end{array}
\end{equation}
Here $R^{\eps}_+\in\Cplx$ is a reflection coefficient, $T^{\eps}\in\Cplx$ is a transmission coefficient and again the ellipsis stand for remainders which decay at infinity with the rate $e^{-(4\pi^2-\om^2)^{1/2}|x|}$. We have the relation of conservation of energy
\[
|R_+^{\eps}|^2+|T^{\eps}|^2=1.
\]
In general, the thin resonator has almost no influence on the scattering of the wave $\mrm{w}^+$ and therefore on the scattering coefficients. More precisely, in general one can establish that as $\eps$ tends to zero, there holds 
\[
 R^\eps_+  = R_++o(1),\qquad\qquad T^\eps=T+o(1),
\]
where $R_+$, $T$ are the scattering coefficients in the unperturbed geometry introduced in (\ref{Field1}). The main goal of this section is to show that by choosing carefully the lengths of the thin resonator as well as its position, we can get almost zero reflection. More precisely, we will establish that by choosing carefully the properties of the resonator, as $\eps$ tends to zero, we can have
\[
 R^\eps_+  = o(1),\qquad\qquad T^\eps=T^0+o(1),
\]
with $|T^0|=1$. This does not yield perfect cloaking because in general $T^0\ne1$ but this is a first step.\\
\newline
In the sequel, we compute an asymptotic expansion of the function $u^\eps$ appearing in (\ref{decompoMainTermPerturbed}) as $\eps$ tends to zero. This will give us an expansion of the coefficients $R^{\eps}_+$, $T^{\eps}$. To proceed, first we introduce some auxiliary objects which will be useful in the analysis.

\subsection{Auxiliary objects} 

$\star$ Considering the limit $\eps\rightarrow0^+$ in the equation \eqref{MainPbPerturbed} restricted to the thin resonator $L^{\eps}$ of length $\ell^\eps=\ell^0+\eps\ell'$, we are led to study the one-dimensional Helmholtz problem with mixed BC
\begin{equation}\label{Pb1D}
\begin{array}{|ll}
\partial^2_yv+\omega^2v=0\qquad\mbox{ in }(1;1+\ell^0)\\[3pt]
v(1)=\partial_yv(1+\ell^0)=0.
\end{array}
\end{equation}
Note that the condition $v(1)=0$ is imposed artificially. Eigenvalues and eigenfunctions (up to a multiplicative constant) of the problem (\ref{Pb1D}) are given by
\[
(\pi (m+1/2)/\ell^0)^2,\qquad\quad v(y)=\sin(\pi(m+1/2)(y-1)/\ell^0)\qquad\quad\mbox{ with }m\in\N:=\{0,1,2,3,\dots\}.
\]
Importantly, in the sequel we shall choose $\ell^0$ such that at the limit $\eps\to0$, the domain $L^{\eps}$ is resonant for the problem (\ref{Pb1D}). In other words, we select $\ell^0$ such that 
\begin{equation}\label{ConditionResonance}
\om\,\ell^0=\pi(m+1/2) 
\end{equation}
for some $m\in\N$. Let us emphasize that the limit problem (\ref{Pb1D}) is set in the fixed segment $(1;1+\ell^0)$. But the true lengths $\ell^\eps=\ell^0+\eps \ell'$ of the resonator $L^{\eps}$ depends on the parameter $\eps$. This is an essential element in the analysis to bring to light the resonant phenomena.\\

\begin{figure}[!ht]
\centering
\begin{tikzpicture}[scale=1.3]
\begin{scope}
\clip (-1.8,2) -- (1.8,2) -- (1.8,0) -- (-1.8,0) -- cycle;
\draw[fill=gray!30,draw=none](0,2) circle (1.8);
\end{scope}
\draw[fill=gray!30,draw=none](-0.5,2) rectangle (0.5,3.5);
\draw (-1.8,2)--(-0.5,2)--(-0.5,3.5);
\draw (1.8,2)--(0.5,2)--(0.5,3.5);
\draw[dashed] (-0.5,3.5)--(-0.5,4);
\draw[dashed] (0.5,3.5)--(0.5,4);
\draw[dashed] (2.1,2)--(1.8,2);
\draw[dashed] (-2.1,2)--(-1.8,2);
\draw[thick,draw=gray!30](-0.495,2)--(0.495,2);
\node[mark size=1pt,color=black] at (0,2) {\pgfuseplotmark{*}};
\node[color=black] at (0,1.8) {\small $O$};
\begin{scope}[shift={(1.9,1)}]
\draw[->] (0,0)--(0.5,0);
\draw[->] (0,0)--(0,0.5);
\node at (0.66,0){\small$\xi_x$};
\node at (0,0.66){\small$\xi_y$};
\end{scope}
\node at (0,1){\small$\Xi^-$};
\node at (0,2.75){\small$\Xi^+$};
\node at (-1.5,2.2){\small$\Xi$}; 
\draw[dotted] (-0.5,2)--(0.5,2);
\end{tikzpicture}
\caption{Geometry of the inner field domain $\Xi$.\label{FrozenGeom}} 
\end{figure}
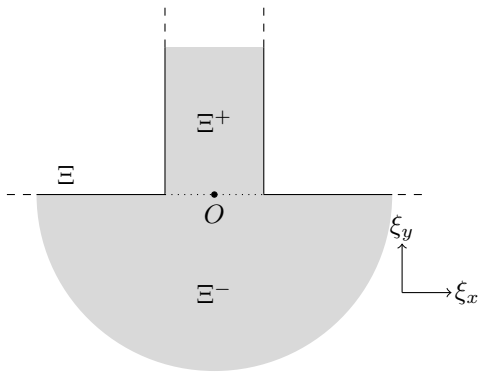

\noindent $\star$ Now we present a second problem which is involved in the construction of asymptotics and which will be used to describe the boundary layer phenomenon near the junction point $A$. To capture rapid variations of the field in the vicinity of $A$, we introduce the stretched coordinates $\xi=(\xi_x,\xi_y)=\eps^{-1}(z-A)=(\eps^{-1}(x-p),\,\eps^{-1}(y-1))$. Observing that 
\begin{equation}\label{Strechted1}
(\Delta_z+\om^2)u^{\eps}(\eps^{-1}(z-A))=\eps^{-2}\Delta_{\xi}u^{\eps}(\xi)+\dots,
\end{equation}
we see that when $\eps$ tends to zero, the main term involves simply a Laplace operator and we are led to consider the Neumann problem 
\begin{equation}\label{PbBoundaryLayer}
-\Delta_\xi Y=0\qquad\mbox{ in }\Xi,\quad\qquad \partial_\nu Y=0\quad\mbox{ on }\partial\Xi.
\end{equation}
Here $\Xi:=\Xi^-\cup\Xi^+\subset\mathbb{R}^2$ (see Figure \ref{FrozenGeom}) is the union of the half-plane $\Xi^-$ and the semi-strip $\Xi^+$ such that
\[
\Xi^-:={\mathbb R}^2_-=\{\xi=(\xi_x,\xi_y):\,\xi_y<0\},\qquad\qquad
\Xi^+:=\{\xi:\,\xi_y\geq 0, |\xi_x|<1/2\}.
\]
In the method of matched asymptotic expansions (see
the monographs \cite{VD,Ilin}, \cite[Chpt. 2]{MaNaPl} and others) that we will use, we will work with solutions of \eqref{PbBoundaryLayer} which are bounded or which have polynomial growth in the semi-strip as $\xi_y\rightarrow+\infty$ as well as logarithmic growth in the half plane as $|\xi|\rightarrow+\infty$. One of such solutions is evident and is given by $Y^0=1$.
Another solution, which is linearly independent with $Y^0$, is the unique function satisfying (\ref{PbBoundaryLayer}) and which has the representation
\begin{equation}\label{PolyGrowth}
Y^1(\xi)=\left\{
\begin{array}{ll}
\xi_y+C_\Xi+O(e^{-\pi \xi_y})& \mbox{ as }\xi_y\rightarrow+\infty,\quad \xi
\in \Xi^+\\[5pt]
\dsp\frac{1}{\pi}\ln \frac{1}{|\xi|}+O
 \Big(\frac{1}{|\xi|}\Big)& \mbox{ as }|\xi|\rightarrow+\infty,\quad \xi
\in \Xi^-.
\end{array}\right.
\end{equation}
Here, $C_\Xi$ is a universal constant whose value can be computed using conformal mapping, see for example \cite{Schn17}. Note that the coefficients in front of the growing terms in \eqref{PolyGrowth} are related due to the fact that a harmonic function has zero total flux at infinity. For the existence of $Y^1$ and the uniqueness of its definition, we refer the reader for example to \cite[Lemma 4.1]{BoCN18} (for general formally self-adjoint boundary value problems, one can look at \cite[\S5]{Naza99b}).

\subsection{Asymptotic analysis}\label{paragraphAsympto}

In this section, we compute an asymptotic expansion of the field $u^{\eps}$ appearing in (\ref{MainPbPerturbed}) as $\eps$ tends to zero. The final results are summarized in Proposition \ref{PropoAsymOneLig}.\\ 
\newline
In the waveguide $\Om$ (without the resonator), we work with the ansatz
\begin{equation}\label{AnsatzWaveguide}
u^\eps=u^0+\eps u' +\dots \quad\mbox{\rm in }\Om,
\end{equation}
while in the thin resonator, we consider the expansion 
\[
u^\eps(x,y)=\eps^{-1}v^{-1}(y)+v^0(y)+\dots\quad\mbox{\rm in }L^\eps.
\]
Here the ellipsis stand for higher order terms which are not important in our analysis. Taking the formal limit $\eps\to0^+$, we find that $v^{-1}$ must solve the homogeneous problem (\ref{Pb1D}). Note in particular that the condition $v^{-1}(1)=0$ comes from the fact that the expansion (\ref{AnsatzWaveguide}) of $u^{\eps}$ in $\Om$ remains bounded as $\eps$ tends to zero. Under the assumption (\ref{ConditionResonance}) for the length $\ell^0$, we must take $v^{-1}$ of the form 
\[
v^{-1}(y)=a {\bf v}(y)\qquad\mbox{ with }\qquad a\in\mathbb{C},\ {\bf v}(y)=\sin(\omega (y-1)).
\]  
Let us stress that the value of $a$ is unknown and will be fixed during the construction of the asymptotics of $u^\eps$. 
At the point $A$, the Taylor formula gives
\begin{equation}\label{DefInnerA}
\eps^{-1}v^{-1}(y)+v^0(y)=0+(C^{A} \xi_y  +v^0(1))+\dots\qquad\mbox{ with }\qquad C^{A}:=a\partial_y {\bf v}(1)=a\omega.
\end{equation}
Here $\xi_y=\eps^{-1}(y-1)$ is the stretched variable introduced just before (\ref{Strechted1}).\\
\newline
We look for an inner expansion of $u^{\eps}$ in the vicinity of $A$ of the form
\[
u^\eps(x)=C^{A}\,Y^1(\xi)+ c^{A}+\dots
\]
where $Y^1$ is introduced in (\ref{PolyGrowth}), $C^{A}$ is defined in (\ref{DefInnerA}) and $c^{A}$ is a constant to determine.\\
\newline 
Let us continue the matching procedure. Taking the limit $\eps\to0^+$, we find that the main term $u^0$ in (\ref{AnsatzWaveguide}) must solve the problem
\[
\Delta u^0 +\omega^2u^0=0\ \mbox{ in }\Om,\qquad 
\partial_\nu u^0=0\mbox{ on }\partial\Om\setminus \{A\},
\]
with the expansion 
\[
u^{0}(x,y)=\begin{array}{|ll}
\mrm{w}^+(x,y)+R^{0}_+\mrm{w}^-(x,y)+\dots &\quad\mbox{ for }x<-d\\[3pt]
\phantom{\mrm{w}^+(x,y)+\ \,} T^{0}\,\mrm{w}^+(x,y)+\dots &\quad\mbox{ for }x>d.\end{array}
\]
Here $R^{0}_+,\,T^0\in\Cplx$ and again the ellipsis stand for remainders which decay exponentially at infinity. The coefficients $R^{0}_+,\,T^0$ will provide the first terms in the asymptotics of $R^{\eps}_+,\,T^\eps$:
\[
R^{\eps}_+=R^{0}_++\dots\qquad\mbox{ and }\qquad T^\eps=T^0+\dots\,.
\]
Matching the behaviours of the inner and outer expansions of $u^{\eps}$ in $\Om$, we find that at the point $A$, the function $u^0$ must expand as 
\[
u^0(x,y)= C^{A}\frac{1}{\pi}\ln \frac{1}{r^{A}}+U^0+O(r^{A})
\qquad \mbox{ when }r^{A}:=((x-p)^2+(y-1)^2)^{1/2}\rightarrow0^+,
\]
where $U^0$ is a constant. Observe that $u^0$ is singular at $A$. Integrating by parts in 
\begin{equation}\label{IntegRho1}
0=\int_{\Om^\kappa}(\Delta u^0 +\omega^2u^0)W^{\pm}-u^0\,(\Delta W^{\pm} +\omega^2 W^{\pm})\,dz,
\end{equation}
with $\Om^\kappa:=\{(x,y)\in\Om\,,|x|<\kappa\mbox{ and }r^A>1/\kappa\}$, and taking the limit $\kappa\to+\infty$, we get 
\[
\begin{array}{|l}
2i\om(R^{0}_+-R_+)+C^{A}W^{+}(A)=0\\[4pt]
2i\om(T^{0}-T)+C^{A}W^{-}(A)=0.
\end{array}
\]
From the expression of $C^{A}$ (see (\ref{DefInnerA})), this gives
\begin{equation}\label{equation1}
\begin{array}{|l}
R^{0}_+=R_++iaW^{+}(A)/2\\[4pt]
T^{0}=T+iaW^{-}(A)/2.
\end{array}
\end{equation}
Then matching the constant behaviour between the outer expansion and the inner  expansion inside $\Om$, we get
\[
U^0=C^{A}\,\pi^{-1}\ln\eps+c^{A}=-C^{A}\,\pi^{-1}|\ln\eps|+c^{A}.
\]
This sets the value of $c^{A}$. However $U^0$ depends on $a$ and we have to explicit this dependence. For $u^0$, we can consider the decomposition
\begin{equation}\label{SecondDecompo}
u^0=W^++C^{A}\gamma
\end{equation}
where $\gamma$ is the outgoing function such that
\begin{equation}\label{DefGamma}
\begin{array}{|rcll}
\Delta \gamma+\om^2\gamma&=&0&\mbox{ in }\Om\\
\partial_\nu\gamma&=&\delta_{A}&\mbox{ on }\partial\Om.
\end{array}
\end{equation}
Here $\delta_{A}$ stands for the Dirac delta function at $A$. Denote by $\Gamma$ the constant behaviour of $\gamma$ at $A$, that is the constant such that $\gamma$ behaves as 
\begin{equation}\label{DefGammaM}
\gamma(x,y)= \frac{1}{\pi}\ln \frac{1}{r^{A}}+\Gamma+O(r^{A})\qquad \mbox{ when }r^{A}\rightarrow0^+.
\end{equation}
Then from (\ref{SecondDecompo}), we derive
\[
U^0=W^+(A)+a\om \Gamma.
\]
Matching the constant behaviour at $A$ inside the thin resonator $L^{\eps}$, we obtain
\begin{equation}\label{BoundaryCondition1}
\begin{array}{lcl}
v^0(1)&=&C^{A}\,C_{\Xi}+c^{A} = U^0+C^{A}\,(\pi^{-1}|\ln\eps|+C_{\Xi})\\[5pt]
 &=& W^+(A)+a\om\,(\pi^{-1}|\ln\eps|+C_{\Xi}+\Gamma).
\end{array}
\end{equation}
Writing the compatibility condition so that the problem (\ref{Pb1D}), supplemented with the condition (\ref{BoundaryCondition1}) instead of $v^0(1)=0$, admits a solution, we get
\begin{equation}\label{CompatibilityCondition}
v^0\partial_y\textbf{v}|_{y=1}-v^0\partial_y\textbf{v}|_{y=1+\ell^0}-(\textbf{v}\partial_y v^0|_{y=1}-\textbf{v}\partial_y v^0|_{y=1+\ell^0})=0.
\end{equation}
Since $\textbf{v}(1)=\partial_y\textbf{v}(1+\ell^0)=0$, we obtain
\[
\om v^0(1)+(-1)^{m}\partial_yv^0(1+\ell^0)=0.
\]
On the other hand, from $\partial_\nu(\eps^{-1}a {\bf v})(1+\ell^\eps)+v^0(\ell^\eps)+\dots =0$, we infer that $\partial_y v^0(1+\ell^0)=\om^2 a\ell'\sin(\om \ell^0)=(-1)^{m}\om^2 a\ell'$. Thus we get
\begin{equation}\label{Part2System}
W^+(A)+a\om\,(\pi^{-1}|\ln\eps|+C_{\Xi}+\Gamma+\ell')=0.
\end{equation}
Below, see Lemmas \ref{LemmaCReal} and \ref{lemmaRelConstants}, we prove that $C_{\Xi}\in\R$ and $\Im m\,(\om\Gamma)=(|W^+(A)|^2+|W^-(A)|^2)/4$. Thus we have
\[
a(\eta+i(|W^+(A)|^2+|W^-(A)|^2)/4)=-W^+(A)
\]
with 
\[
\eta:=\om(\pi^{-1}|\ln\eps|+C_{\Xi}+\Re e\,\Gamma+\ell'). 
\]
Gathering (\ref{equation1}) and (\ref{Part2System}), we obtain the system
\[
\begin{array}{|l}
R^{0}_+=R_++iaW^{+}(A)/2\\[4pt]
T^{0}=T+iaW^{-}(A)/2\\[4pt]
a(\eta+i(|W^+(A)|^2+|W^-(A)|^2)/4)=-W^+(A).
\end{array}
\]
Solving this system, we get
\begin{equation}\label{SolutionSystem}
\begin{array}{c}
\dsp R^{0}_+=R_+-\cfrac{2i(W^{+}(A))^2}{4\eta+i(|W^+(A)|^2+|W^-(A)|^2)}\ ,\quad T^{0}=T-\cfrac{2iW^{+}(A)W^{-}(A)}{4\eta+i(|W^+(A)|^2+|W^-(A)|^2)}\ ,\\[12pt]
a=\cfrac{-4W^+(A)}{(4\eta+i(|W^+(A)|^2+|W^-(A)|^2))}\ .
\end{array}
\end{equation}
This ends the asymptotic analysis of $u^{\eps}$, $R^{\eps}_+$, $T^{\eps}$ as $\eps$ tends to zero. Let us summarize these results. 
\begin{proposition}\label{PropoAsymOneLig}
Assume that
\begin{equation}\label{DefRegLong}
\ell^{\eps}= \pi(m+1/2)/\om+\eps(\eta/\om-C_{\Xi}-\Re e\,\Gamma-\pi^{-1}|\ln\eps|)
\end{equation}
for some $m\in\N$ and $\eta\in\R$. Then when $\eps$ tends to zero, we have the following expansions
\begin{equation}\label{AsymptoFinalResults1}
\fbox{$\begin{array}{l}
u^{\eps}(x,y)=W^+(x,y)+a\om\gamma(x,y)+o(1) \quad\mbox{ in }\Om,\\[6pt]
u^{\eps}(x,y)=\eps^{-1}a\sin(\om (y-1))+O(1) \quad\mbox{ in }L^\eps,\\[6pt]
R_{+}^{\eps}=R_{+}^{0}+o(1),\qquad T^{\eps}=T^0+o(1), \\[6pt]
\mbox{where $a$, $R_{+}^{0}$, $T^0$ are given by (\ref{SolutionSystem}).}
\end{array}$}
\end{equation}
Here $\gamma$ is the function introduced in (\ref{DefGamma}). 
\end{proposition}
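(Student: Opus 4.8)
The matched expansion worked out in Section~\ref{paragraphAsympto} already produces every ingredient: the outer field $W^++a\om\gamma$ in $\Om$, the boundary layer built from $Y^1$ near the junction $A$, the one-dimensional profiles $\eps^{-1}a\mathbf v(y)+v^0(y)$ in $L^\eps$, and the constants $a$, $R^0_+$, $T^0$ fixed by the linear system leading to~(\ref{SolutionSystem}). The plan is to (i) glue these pieces into a global approximate solution $u^\eps_{\mrm{app}}$ of~(\ref{MainPbPerturbed}); (ii) check that it leaves a residual tending to $0$ in the equation and the boundary condition; and (iii) invoke an a priori estimate for~(\ref{MainPbPerturbed}), supplemented with outgoing behaviour, which is \emph{uniform} with respect to $\eps$. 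Then $u^\eps-u^\eps_{\mrm{app}}$ solves~(\ref{MainPbPerturbed}) with a small right-hand side, and the three lines of~(\ref{AsymptoFinalResults1}) are read off $u^\eps_{\mrm{app}}$. It is convenient to first replace the unbounded waveguide by the bounded domain $\Om^\eps\cap\{|x|<d'\}$ for some $d'>d$, closing it with the exact Dirichlet-to-Neumann operator built from $\mrm w^\pm$ and the evanescent modes; since the ellipses in~(\ref{Field1})--(\ref{Field2}) decay exponentially, this is harmless, and $R^\eps_+$, $T^\eps$ become (components of) the trace of $u^\eps$ on $\{x=\pm d'\}$.

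\textbf{Global approximation and residual.} I would cover $\Om^\eps$ by three overlapping regions — the part of $\Om$ at fixed distance from $A$, a disc of radius $\sim\eps^{\mu}$ about $A$ with $\mu\in(0,1)$ seen in the stretched variable $\xi$, and the body of the channel $L^\eps$ — together with a subordinate partition of unity. On the first region one puts $W^++a\om\gamma$, on the second the boundary layer $a\om\,Y^1(\xi)+c^A$ (with $c^A$ set by matching the logarithmic behaviours, see~(\ref{DefInnerA}) and the line $U^0=C^A\pi^{-1}\ln\eps+c^A$), and on the third $\eps^{-1}a\mathbf v(y)+v^0(y)$, where $v^0$ solves~(\ref{Pb1D}) with the inhomogeneous trace~(\ref{BoundaryCondition1}). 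Solvability of the latter is exactly the compatibility condition~(\ref{CompatibilityCondition}) which, together with $\partial_y v^0(1+\ell^0)=(-1)^m\om^2a\ell'$ coming from the Neumann condition at the far end of $L^\eps$, is what forces the precise scaling~(\ref{DefRegLong}) of $\ell^\eps$. By the matching relations the residual of $u^\eps_{\mrm{app}}$ involves only the exponential and $O(|\xi|^{-1})$ remainders in~(\ref{PolyGrowth}), the $O(r^A)$ term in~(\ref{DefGammaM}), the discrepancy $\ell^\eps-\ell^0$ already compensated at leading order, and the dropped higher-order terms; once one adds the first correctors in $\Om$, in $L^\eps$ and in the inner zone (this step must be carried out with care, since the field is of size $\eps^{-1}$ in the resonator and logarithms pervade the matching), its $\mL^2$ norm together with the boundary defect is $O(\eps^\delta)$ for some $\delta>0$, in particular $o(1)$.

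\textbf{The key point: a uniform stability estimate.} The crux is to show that the operator of~(\ref{MainPbPerturbed}) on the truncated domain with the above transparent condition is boundedly invertible with a bound \emph{independent of $\eps\in(0,\eps_0]$}, when $\ell^\eps$ has the form~(\ref{DefRegLong}) with a fixed finite $\eta\in\R$ and $(W^+(A),W^-(A))\neq(0,0)$. I would argue by contradiction: otherwise there are $\eps_n\to0$ and $z_n$ of unit norm with residual tending to $0$; rescaling $z_n$ by $\eps_n$ inside the channel and passing to weak limits (interior $\mH^1$ bounds away from $A$, a boundary-layer bound near $A$, the outgoing condition at $\{|x|=d'\}$ providing compactness), the limit in $\Om$ solves $\Delta z+\om^2z=0$ with outgoing behaviour and at most a $\pi^{-1}\ln(1/r^A)$ singularity at $A$, the rescaled limit in the channel solves~(\ref{Pb1D}), and matching constants and fluxes at $A$ yields precisely the homogeneous version of~(\ref{SolutionSystem}). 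Its determinant is $4\eta+i(|W^+(A)|^2+|W^-(A)|^2)$, which is nonzero since $\eta\in\R$ is finite and, by Lemmas~\ref{LemmaCReal} and~\ref{lemmaRelConstants}, $C_\Xi\in\R$ while $\Im m(\om\Gamma)=(|W^+(A)|^2+|W^-(A)|^2)/4>0$; hence all limits vanish and one upgrades this to $\|z_n\|\to0$, a contradiction. I expect this uniform bound — essentially the statement that the radiation loss into the guide supplies just enough effective damping to keep the resolvent $O(1)$ even at the resonant length, the rigorous counterpart of the nonzero imaginary part in the denominators of~(\ref{SolutionSystem}) — to be the main obstacle; the rest is bookkeeping.

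\textbf{Conclusion.} With the uniform estimate, applying the resolvent to the $o(1)$ residual gives $\|u^\eps-u^\eps_{\mrm{app}}\|=o(1)$ on the truncated domain, hence on $\Om^\eps$. Restricting to $\Om$ and to $L^\eps$ and recalling $\mathbf v(y)=\sin(\om(y-1))$ gives the first two lines of~(\ref{AsymptoFinalResults1}); taking traces on $\{x=\pm d'\}$ and comparing with $W^++a\om\gamma$, whose far behaviour encodes $R^0_+$ and $T^0$ via~(\ref{equation1})--(\ref{SecondDecompo}), gives $R^\eps_+=R^0_++o(1)$ and $T^\eps=T^0+o(1)$. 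One may finally check consistency with $|R^\eps_+|^2+|T^\eps|^2=1$, which in the limit reduces to $|R^0_+|^2+|T^0|^2=1$ and also follows directly from~(\ref{SolutionSystem}) using the unitarity relations~(\ref{RelConsNRJ}).
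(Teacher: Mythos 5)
Your proposal is correct in outline, and its formal core --- the outer ansatz $W^++a\om\gamma$, the inner layer built on $Y^1$, the profiles $\eps^{-1}a\mathbf{v}+v^0$ in the resonator, and the compatibility condition that produces the system (\ref{SolutionSystem}) --- coincides exactly with what the paper does in \S\ref{paragraphAsympto}. The difference is one of ambition: the paper \emph{stops} at the formal matched-asymptotics computation and states Proposition \ref{PropoAsymOneLig} as a summary of it, with no glued approximation, no residual estimate and no stability analysis, whereas you go on to sketch the full justification (cut-offs and a global $u^\eps_{\mrm{app}}$, an $o(1)$ residual, and above all a resolvent bound uniform in $\eps$ obtained by contradiction and compactness). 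You have correctly identified the crux that the paper leaves implicit: at the resonant length the naive limit operator is not invertible, and what restores uniform solvability is the radiation damping $\Im m\,(\om\Gamma)=(|W^+(A)|^2+|W^-(A)|^2)/4$ of Lemma \ref{lemmaRelConstants}, which makes the determinant $4\eta+i(|W^+(A)|^2+|W^-(A)|^2)$ of the limit system nonzero for real $\eta$. This is precisely the scheme carried out in the references the authors cite for related settings (\cite{NaCh21a,NaCh21b,BoCN18}), where the uniform estimate is usually obtained via explicit bounds in weighted spaces rather than by compactness; your compactness argument across the $\eps$-dependent domains is workable but is the delicate step, in particular in the intermediate zone around $A$ where neither the outer nor the channel scaling applies. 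Two small caveats: (i) the positivity $\Im m\,(\om\Gamma)>0$ requires $(W^+(A),W^-(A))\ne(0,0)$, and the degenerate case $\eta=0$ with $W^{\pm}(A)=0$ must be excluded for (\ref{SolutionSystem}) to make sense at all (the paper silently assumes this too); (ii) the residual in the resonator must be measured against the $\eps^{-1}$ amplitude of the field there, which typically costs factors of $|\ln\eps|$ --- harmless for an $o(1)$ statement but worth stating. Neither point is a gap in your logic; your proposal simply proves more than the paper does.
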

\noindent Let us make a few comments concerning this result. First, despite of the presence of the term $|\ln\eps|$ in the definition of the length $\ell^{\eps}$ in (\ref{DefRegLong}), note that $\ell^{\eps}$ in (\ref{DefRegLong}) converges to $\pi(m+1/2)/\om$  when $\eps$ tends to zero. But the important message here is that by choosing the way $\ell^{\eps}$ converges to $\pi(m+1/2)/\om$, in particular by changing the parameter $\eta\in\R$ in (\ref{DefRegLong}), we obtain different limits for the scattering coefficients $R_{+}^{\eps}$, $T^{\eps}$ (see an illustration with Figure \ref{Illustration}). As a consequence, the scattering coefficients considered as functions of both the width and the length of the resonator are not continuous at the point $(0,\pi(m+1/2)/\om)$. Second, we see from (\ref{AsymptoFinalResults1}) that when $a\ne0$, which is equivalent to have $W^+(A)\ne0$, the amplitude of the field $u^{\eps}$ in the resonator blows up as $\eps$ tends to zero. The case $a=0$, or equivalently $W^+(A)=0$, corresponds to a situation where, roughly speaking, the resonant eigenfunction associated with the complex resonance existing due to the presence of the thin resonator is not excited. Finally, note that a direct calculus using (\ref{RelConsNRJ}) and the relations (\ref{IdentityStructure}) shows that independently of the choice of $\eta$ in (\ref{SolutionSystem}), we have
\begin{equation}\label{RelConsNRJ}
|R_{+}^0|^2+|T^0|^2=1.
\end{equation}
This is coherent with the conservation of energy.

\begin{figure}[!ht]
\centering
\raisebox{-0.2cm}{\begin{tikzpicture}[scale=0.95]
\draw[->] (-0.5,0) -- (5.1,0) node[right] {$\eps$};
\draw[->] (0,-0.2) -- (0,3.3) node[left] {$\ell^\eps$};
\node at (-0.1,3.14/2-0.3) [left] {\small $\cfrac{\pi(m+1/2)}{\om}$};
\begin{scope}
\clip(-2,-0.5) rectangle (4.5,3.2);
\draw[domain=0.01:4.7,smooth,variable=\x,green!60!black] plot ({\x},{3.14/2+\x*(-0.5+log10(\x))});
\draw[domain=0.01:4.7,smooth,variable=\x,green!60!black] plot ({\x},{3.14/2+\x*(-0.4+log10(\x))});
\draw[domain=0.01:4.7,smooth,variable=\x,green!60!black] plot ({\x},{3.14/2+\x*(-0.3+log10(\x))});
\draw[domain=0.01:4.7,smooth,variable=\x,green!60!black] plot ({\x},{3.14/2+\x*(-0.2+log10(\x))});
\draw[domain=0.01:4.7,smooth,variable=\x,green!60!black] plot ({\x},{3.14/2+\x*(-0.1+log10(\x))});
\draw[domain=0.01:4.7,smooth,variable=\x,green!60!black] plot ({\x},{3.14/2+\x*(-0.0+log10(\x))});
\draw[domain=0.01:4.7,smooth,variable=\x,green!60!black] plot ({\x},{3.14/2+\x*(0.5+log10(\x))});
\draw[domain=0.01:4.7,smooth,variable=\x,green!60!black] plot ({\x},{3.14/2+\x*(0.4+log10(\x))});
\draw[domain=0.01:4.7,smooth,variable=\x,green!60!black] plot ({\x},{3.14/2+\x*(0.3+log10(\x))});
\draw[domain=0.01:4.7,smooth,variable=\x,green!60!black] plot ({\x},{3.14/2+\x*(0.2+log10(\x))});
\draw[domain=0.01:4.7,smooth,variable=\x,green!60!black] plot ({\x},{3.14/2+\x*(0.1+log10(\x))});
\end{scope}
\draw[magenta,dashed,very thick] (0.4,-0.2) -- (0.4,3.3);
\node at (0.4,-0.2) [below] {$\eps_0$};
\end{tikzpicture}}
\caption{Paths $\{(\eps,\pi(m+1/2)/\om+\eps(\eta/\om-C_{\Xi}-\Re e\,\Gamma-\pi^{-1}|\ln\eps|),\,\eps>0\}\subset\mathbb{R}^2$ for several values of $\eta$.  According to the chosen path, the limit of the scattering coefficients along this path as $\eps\to0^+$ is different. With this picture, we can also understand that for a fixed small $\eps_0$, the scattering coefficients have a rapid variation as the length of the resonator changes in a vicinity of $\pi(m+1/2)/\om$ (see similar explanations in \cite{NaCh21a,NaCh21b}).  \label{Illustration}}
\end{figure}

\subsection{Almost zero reflection}

Now we explain how to use the results above to get almost zero reflection. More precisely, we prove the following statement, the main result of the section.
\begin{proposition}\label{PropositionZeroRef}
Assume that $R_+\ne0$ and $T\ne0$. Then there are some positions of $A=(p,1)$ such that there holds $R^{0}_+(\eta)=0$ for some $\eta\in\R$. Here $R^{0}_+$ is the main term in the asymptotics of $R_{+}^{\eps}$, see (\ref{AsymptoFinalResults1}). For such $A$, Proposition \ref{PropoAsymOneLig} ensures that one can make $R_{+}^{\eps}$ as small as one wishes by taking $\eps$ small enough and by tuning correctly the length of the resonator $L^{\eps}$.
\end{proposition}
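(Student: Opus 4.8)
~\\
\textbf{Proof proposal.}
The plan is to study the map $\eta\mapsto R^{0}_+(\eta)$ given by the explicit formula in \eqref{SolutionSystem} and to show that, for a suitable position of $A$, the curve it describes in $\Cplx$ passes through the origin. Write $\alpha:=W^+(A)$, $\beta:=W^-(A)$ and $N:=|\alpha|^2+|\beta|^2$, so that
\[
R^{0}_+(\eta)=R_+-\cfrac{2i\alpha^2}{4\eta+iN}.
\]
As $\eta$ runs over $\R$, the quantity $1/(4\eta+iN)$ traces the circle of radius $1/(2N)$ centred at $-i/(2N)$ (the standard image of the real line under a M\"obius transform, since $N>0$ as long as $(\alpha,\beta)\neq(0,0)$). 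Hence $R^{0}_+(\eta)$ runs over the circle $\mathscr{C}_A$ of centre $R_+-\alpha^2/N$ and radius $|\alpha|^2/N$. The condition $R^{0}_+(\eta)=0$ for some $\eta\in\R$ is therefore equivalent to $0\in\mathscr{C}_A$, i.e. to the single scalar equation
\begin{equation}\label{CircleCondition}
\Big|\,R_+-\frac{\alpha^2}{N}\,\Big|=\frac{|\alpha|^2}{N}.
\end{equation}

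Next I would simplify \eqref{CircleCondition} using the structure of the scattering matrix. Squaring and expanding, \eqref{CircleCondition} reads $|R_+|^2 N^2-2N\,\Re e(\overline{R_+}\,\alpha^2)+|\alpha|^4=|\alpha|^4$, that is, after dividing by $N$ (recall $N>0$),
\[
|R_+|^2\,N=2\,\Re e\big(\overline{R_+}\,\alpha^2\big),\qquad\text{i.e.}\qquad |R_+|^2\big(|\alpha|^2+|\beta|^2\big)=2\,\Re e\big(\overline{R_+}\,\alpha^2\big).
\]
Now I would feed in Proposition~\ref{propositionStructure}: evaluating the first identity of \eqref{IdentityStructure} at the point $A$ gives $R_+\overline{\alpha}+T\overline{\beta}=\alpha$, and the second gives $T\overline{\alpha}+R_-\overline{\beta}=\beta$. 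Together with the energy relations \eqref{RelConsNRJ}, $|R_+|^2+|T|^2=1$ and $\overline{R_+}T+\overline{T}R_-=0$, these let me express everything in terms of $\alpha$ alone. In particular multiplying $R_+\overline{\alpha}+T\overline{\beta}=\alpha$ by $\overline{R_+}$ and using $|T|^2=1-|R_+|^2$ I expect to reduce the target equation to a clean relation such as $\Re e\big(\overline{T}\,\overline{\alpha}\,\beta\big)=0$ or, equivalently after one more substitution, to a condition of the form $\Re e\big((\text{known unimodular constant})\cdot\alpha\overline{\beta}\big)=0$. The precise constant will come out of the algebra; the point is that the cloaking-by-one-resonator condition becomes one real equation on the pair $(\alpha,\beta)=(W^+(A),W^-(A))$.

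Finally I would show this equation is solvable by varying the position $A=(p,1)$ along the top wall. For $p<-d$ the scattering solutions are explicit: $W^+(A)=e^{-i\om p}+R_+e^{i\om p}$ and $W^-(A)=Te^{i\om p}$, so on the left-hand lead $\alpha\overline{\beta}=\overline{T}\,e^{-i\om p}\big(e^{-i\om p}+R_+e^{i\om p}\big)=\overline{T}\,e^{-2i\om p}+\overline{T}R_+$. As $p$ decreases, $e^{-2i\om p}$ sweeps the whole unit circle, so $\Re e$ of any fixed nonzero multiple of $\alpha\overline{\beta}$ takes both signs and hence vanishes for some $p$; the hypotheses $R_+\neq0$, $T\neq0$ guarantee the relevant coefficients are nonzero and that at such a point $\alpha=W^+(A)\neq0$ (otherwise $R^0_+=R_+\neq0$, contradiction), so $N>0$ and the M\"obius picture above is valid. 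For that $p$, pick the corresponding $\eta\in\R$ solving $R^0_+(\eta)=0$; then \eqref{DefRegLong} with this $\eta$ and any admissible $m\in\N$ defines the length $\ell^\eps$, and Proposition~\ref{PropoAsymOneLig} gives $R^\eps_+=R^0_++o(1)=o(1)$. I expect the main obstacle to be the bookkeeping in the second paragraph: correctly exploiting the four scalar relations coming from \eqref{IdentityStructure} and \eqref{RelConsNRJ} to collapse \eqref{CircleCondition} to a transparent one-parameter condition on $p$, and checking the nondegeneracy ($\alpha\neq0$ at the chosen $p$, so that the resonator is genuinely excited) rather than any analytic subtlety.
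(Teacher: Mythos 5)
Your overall strategy is the one the paper follows: the M\"obius circle traced by $\eta\mapsto R^0_+(\eta)$, the scalar condition for that circle to pass through the origin (your squared condition is, after dividing by $|R_+|^2$, exactly the paper's identity $|W^+(A)|^2+|W^-(A)|^2=2\,\Re e\,((W^+(A))^2/R_+)$, i.e.\ (\ref{WithoutApprox})), and the resolution of that condition by sending the resonator far into the left lead. Two points in your write-up do not survive scrutiny, though.

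First, the algebraic reduction you announce in the second paragraph does not come out as predicted. Multiplying $\alpha=R_+\overline{\alpha}+T\overline{\beta}$ by $\alpha$ gives $\alpha^2/R_+=|\alpha|^2+(T/R_+)\,\alpha\overline{\beta}$, so the target condition becomes $|\beta|^2-|\alpha|^2=2\,\Re e\,\big((T/R_+)\,\alpha\overline{\beta}\big)$: a genuine $|\beta|^2-|\alpha|^2$ term survives, and the condition is \emph{not} of the form $\Re e\,(c\,\alpha\overline{\beta})=0$. Consequently your closing argument (``$\Re e$ of a fixed nonzero multiple of $\alpha\overline{\beta}$ takes both signs'') is applied to an equation you never actually obtain. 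The paper avoids this by substituting the far-field forms directly into (\ref{WithoutApprox}) and using $|R_+|^2+|T|^2=1$, which collapses everything to $1+\Re e\,(e^{2i\om p}/R_+)=0$, i.e.\ $\cos(2\om p-\theta_+)=-\rho$; solvability then rests on $\rho=|R_+|<1$, which is where $T\ne0$ enters, not merely on ``the coefficients being nonzero''. (Your far-field formulas also have the exponents swapped relative to (\ref{Field1})--(\ref{Field2}); harmless for the sweep, but worth fixing.)

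Second, you treat the modal expressions as exact for $p<-d$ (``the scattering solutions are explicit''). They are not: the ellipses in (\ref{Field1})--(\ref{Field2}) are evanescent remainders of size $O(e^{-\sqrt{4\pi^2-\om^2}\,|p|})$, so solving the leading-order equation exactly in $p$ does not by itself make the true functional vanish. The paper closes this gap by introducing $\mathscr{F}$ and $\mathscr{F}^{\mrm{asy}}$, proving the estimate (\ref{ErrorEstimFunc}), checking via (\ref{CalculusDerivative}) that $\mathscr{F}^{\mrm{asy}}$ changes sign across its zeros with nonvanishing derivative (this uses $\rho<1$ again), and invoking the intermediate value theorem for $p$ negative enough. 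Your phrase ``takes both signs and hence vanishes'' contains the right idea, but it must be applied to the exact functional after controlling the exponentially small discrepancy and verifying the non-degeneracy of the leading-order zero; as written, the step is asserted rather than proved. Your non-degeneracy remark ($\alpha\ne0$ at any solution, since otherwise the circle degenerates to $\{R_+\}\ne\{0\}$) is correct and matches the paper.
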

\begin{remark}\label{RmkIntermLig}
Note that we exclude the case $R_+=0$ because in this situation we already have zero reflection and there is no need for adding a resonator. In the case $T=0$, the proof below does not work. In this situation, one possibility to get zero reflection is to add first one or several resonators to obtain a transmission coefficient quite different from zero. And then to add another well-tuned resonator to kill the reflection. Let us mention that this strategy is also interesting when $T$ is small but non zero because in this case the Procedure \ref{MainProce} proposed below can be quite unstable.
\end{remark}
\begin{proof}
First, let us observe that for $\alpha\in\R\setminus\{0\}$, results on M\"obius transformation guarantee that the set 
\begin{equation}\label{Mobius}
\{\cfrac{1}{\eta+i\alpha}\,|\,\eta\in\R\}
\end{equation}
coincides with $\mathscr{C}(-i/(2\alpha),1/(2\alpha))\setminus\{0\}$ where $\mathscr{C}(-i/(2\alpha),1/(2\alpha))$ is the circle centred at $-i/(2\alpha)$ of radius $1/(2\alpha)$. We infer that when $W^+(A)W^-(A)\ne0$, for $\eta$ varying in $\R$, the coefficients $R_{+}^{0}(\eta)$, $T^{0}(\eta)$ in (\ref{SolutionSystem}) run on circles (see Figure \ref{FigFishMatSca} left).\\
\newline
On the other hand, we find that when $W^+(A)\ne0$, we have $R_{+}^{0}(\eta)=0$ if and only if 
\begin{equation}\label{sweepR}
4\eta+i(|W^+(A)|^2+|W^-(A)|^2)=2i(W^+(A))^2/R_+.
\end{equation}
We see that (\ref{sweepR}) is valid for a certain $\eta\in\R$ if and only if
\begin{equation}\label{WithoutApprox}
|W^+(A)|^2+|W^-(A)|^2=2\Re e\,((W^+(A))^2/R_+).
\end{equation}
The difficulty in the verification of this identity lies in the fact that in general we do not have explicit formula for $W^{\pm}$ which would allows us to assess $W^{\pm}(A)$. To bypass this problem, we will look at a situation where the resonator is located far from the obstacle. To set ideas, we assume that it is on the left. When $p$ tends to $-\infty$, due to decompositions (\ref{Field1})--(\ref{Field2}), we know that there is $p_0\in\R$ such that there exists $C>0$, independent of $p\le p_0$, such that
\begin{equation}\label{ErrorEstim}
|W^+(A)- (e^{i\om p}+R_+\,e^{-i\om p})|+|W^-(A)- T\,e^{-i\om p}| \le C\,e^{-\sqrt{4\pi^2-\om^2}\,|p|}.
\end{equation}
Since $T\ne0$, this guarantees that $W^+(A)W^-(A)\ne0$ for $p$ small enough. Define the functionals 
\[
\begin{array}{rcl}
\mathscr{F}(p)&:=&|W^+(A)|^2+|W^-(A)|^2-2\Re e\,((W^+(A))^2/R_+)\\[3pt]
\mathscr{F}^{\mrm{asy}}(p)&:=&|e^{i\om p}+R_+\,e^{-i\om p}|^2+|T\,e^{-i\om p}|^2-2\Re e\,((e^{i\om p}+R_+\,e^{-i\om p})^2/R_+)
\end{array}
\]
(the superscript $^{\mrm{asy}}$ stands for ``asymptotic''). Observe that (\ref{WithoutApprox}) is valid if and only $\mathscr{F}(p)=0$. 
From (\ref{ErrorEstim}), there holds 
\begin{equation}\label{ErrorEstimFunc}
|\mathscr{F}(p)-\mathscr{F}^{\mrm{asy}}(p)| \le C\,e^{-\sqrt{4\pi^2-\om^2}\,|p|}
\end{equation}
for $p\le p_0$. The constant $C>0$ may change from one line to another but remains independent of $p\le p_0$. A direct calculation yields 
\[
\begin{array}{rcl}
\mathscr{F}^{\mrm{asy}}(p)&=&1+|R_+|^2+|T|^2+2\Re e\,(R_+\,e^{-2i\om p})-2(\Re e\,(e^{2i\om p}/R_+)+2+\Re e\,(R_+e^{-2i\om p}))\\[3pt]
&=&2(1+\Re e\,(e^{2i\om p}/R_+))
\end{array}
\]
(the second equality is a consequence of (\ref{RelConsNRJ})). Thus there holds $\mathscr{F}^{\mrm{asy}}(p)=0$ if and only if $\Re e\,(e^{2i\om p}/R_+)=-1$. If we set $R_+=\rho e^{i\theta_+}$ with $\rho>0$ and $\theta_+\in[0;2\pi)$, this is equivalent to have
\begin{equation}\label{RelationToGetZeroRLeft}
\cos(2\om p-\theta_+)=-\rho.
\end{equation}
Since $\rho\in(0;1)$, this equation always has some solutions of the form 
$p_{\star}+n\pi/\om$ with $n\in\Z$ (observe actually that there are two families of solutions). Besides, we find
\begin{equation}\label{CalculusDerivative}
\partial_{p}\mathscr{F}^{\mrm{asy}}(p_{\star})=\Re e\,(2i\om\,e^{2i\om p}/R_+)=\Re e\,(2i\om\,e^{i(2\om p-\theta_+)})=\pm2\om\sqrt{1-\rho^2}\ne0.
\end{equation}
Relation (\ref{CalculusDerivative}) guarantees that $\mathscr{F}^{\mrm{asy}}$ changes sign in a neighbourhood of $p^{\star}$. If $s>0$ is a small given parameter, from estimate (\ref{ErrorEstimFunc}) and the intermediate values theorem, we deduce that $p\mapsto \mathscr{F}(p)$ vanishes on $[p_{\star}-n\pi/\om-s;p_{\star}-n\pi/\om+s]$ for $n\in\N$ large enough. As a consequence, relation (\ref{WithoutApprox}) holds at least for an almost periodic sequence of points $A=(p,1)$ with $p\to-\infty$. In this case, we have $R^{0}_+(\eta)=0$ for a certain $\eta\in\R$.
\end{proof}
\noindent Numerically, one can compute $W^{\pm}(A)$ and look for positions of $A$ such that (\ref{WithoutApprox}) is satisfied. Another possibility is to place the resonator quite far on the left of the obstacle at the abscissa $p$ satisfying (\ref{RelationToGetZeroRLeft}). If the resonator is a bit far on the right, we have 
$W^+(A)\approx T\,e^{i\om p}$ and $W^-(A)\approx e^{-i\om p}+R_-\,e^{i\om p}$ so that 
\begin{equation}\label{ApproximationIdentityBis}
\begin{array}{cl}
&|W^+(A)|^2+|W^-(A)|^2-2\Re e\,((W^+(A))^2/R_+) \\[4pt]
\approx & 1+|R_-|^2+|T|^2+2\Re e\,(R_-\,e^{2i\om p})-2\Re e\,(T^2e^{2i\om p}/R_+)\\[4pt]
\end{array}
\end{equation}
Using that $T/R_+=-\overline{T}/\overline{R_-}$ (see (\ref{RelConsNRJ})), we find that the right hand side of (\ref{ApproximationIdentityBis}) cancels if and only if $1+\Re e\,(R_-\,e^{2i\om p})+|T|^2\Re e\,(e^{2i\om p}/\overline{R_-})=0$. Writing $R_-=\rho e^{i\theta_-}$ with $\rho>0$ and $\theta_-\in[0;2\pi)$, one can check that this is equivalent to have
\begin{equation}\label{RelationToGetZeroRRight}
\begin{array}{cl}
\cos(2\om p+\theta_-)=-\rho.
\end{array}
\end{equation}

\noindent Finally we can state the following procedure to get almost zero reflection.
\begin{procedure}\label{MainProce}
Let $R_{\pm}=\rho e^{i\theta_{\pm}}$ with $\rho\in(0;1)$, $\theta_{\pm}\in[0;2\pi)$ be the reflection coefficients introduced in (\ref{Field1}). Place the resonator $L^{\eps}$ a bit far on the left (resp. on the right) of the obstacle at the position $A=(p,1)$ with $p$ satisfying (\ref{RelationToGetZeroRLeft}) (resp. (\ref{RelationToGetZeroRRight})) or (\ref{sweepR}). Then for any fixed $m\in\N$, for $\eps$ small enough, there is one length of the resonator, close to $\pi(m+1/2)/\om$, such that $R_+^{\eps}\approx0$.
\end{procedure}

\begin{remark}\label{RemarkBelowCritical}
For a given small $\eps>0$, let us denote by $\ell^{\eps}_{\star}$ the length of the resonator $L^{\eps}$ which minimizes the quantity $\ell\mapsto|R_+^{\eps}(\ell)|$ in a neighbourhood of $\pi(m+1/2)/\om$. Here and in the sequel, $R_+^{\eps}(\ell)$ denote the scattering coefficient introduced in (\ref{decompoMainTermPerturbed}) in a geometry with a resonator of length $\ell>0$. From (\ref{DefRegLong}), (\ref{AsymptoFinalResults1}), we see that $\ell^{\eps}_{\star}$ converges to $\pi(m+1/2)/\om$ as $\eps$ tends to zero. Moreover, (\ref{DefRegLong}), (\ref{AsymptoFinalResults1}) also guarantee that we have $\ell^{\eps}_{\star}<\pi(m+1/2)/\om$ for $\eps$ small enough.
\end{remark}

\noindent Let us illustrate these results. To proceed, we compute numerically the scattering solution $u^{\eps}$ introduced in (\ref{decompoMainTermPerturbed}). We use a P2 finite element method in a domain obtained by truncating $\Om^{\eps}$. On the artificial boundary created by the truncation, a Dirichlet-to-Neumann operator with 15 terms serves as a transparent condition (see more details for example in \cite{Gold82,HaPG98,BoLe11}). Once we have computed $u^{\eps}$, we get easily the scattering coefficients $R^{\eps}_+$, $T^{\eps}$  in the representation (\ref{decompoMainTermPerturbed}). For all the simulations of this article, the wavenumber $\om$ is set to $\om=0.8\pi$. The computations have been made using the library \texttt{Freefem++} \cite{Hech12}.\\[-10pt]
\newline

\begin{figure}[!ht]
\centering
\includegraphics[width=0.48\textwidth]{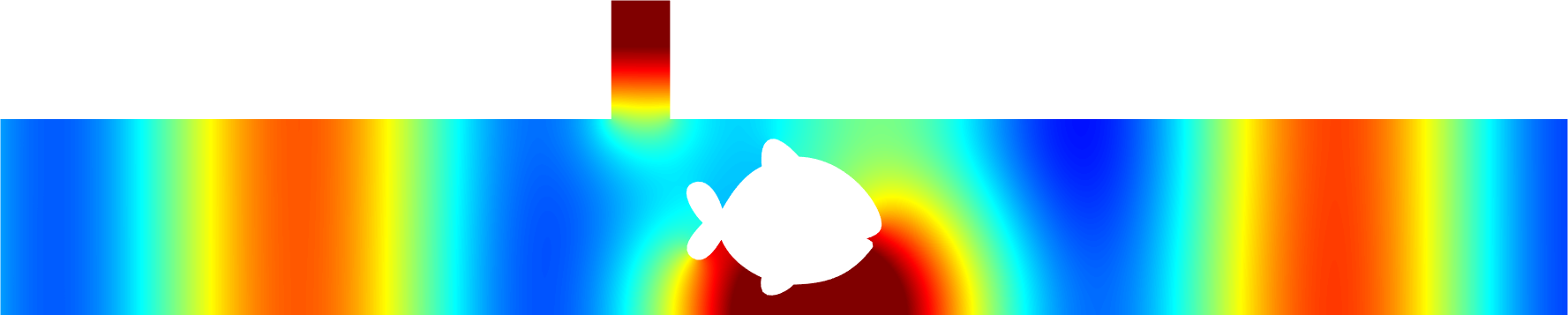}\quad\ \includegraphics[width=0.48\textwidth]{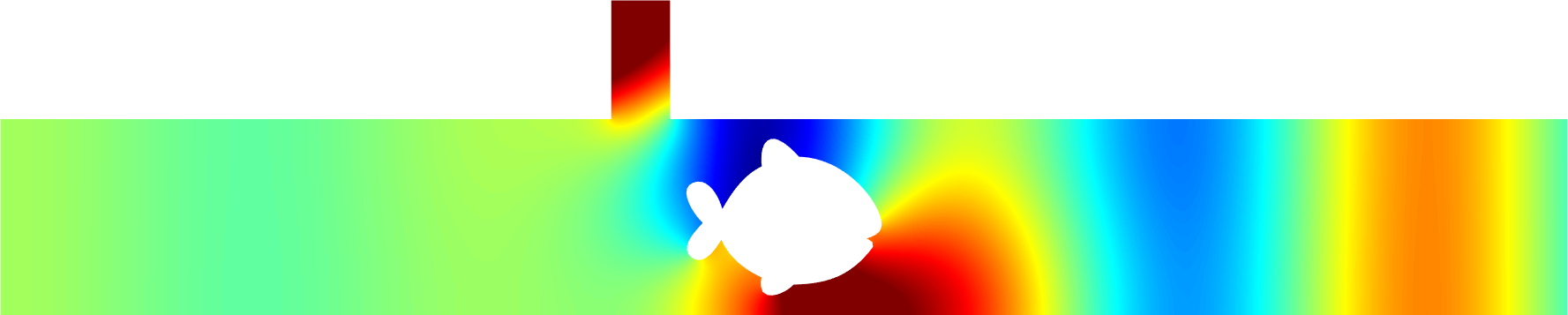}
\caption{Real parts of $u^\eps$ (left) and of $u^\eps-\mrm{w}^+$ (right). The length of the resonator is tuned to get almost zero reflection (Proposition \ref{PropositionZeroRef}). Here $\eps=0.3$.\label{FigFish01}}
\end{figure}

\begin{figure}[!ht]
\centering
\includegraphics[width=0.48\textwidth]{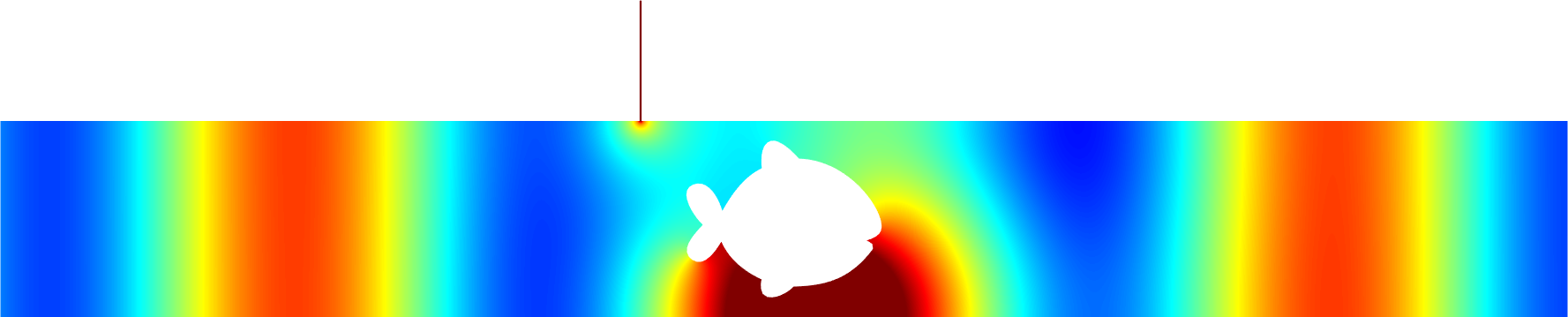}\quad\  \includegraphics[width=0.48\textwidth]{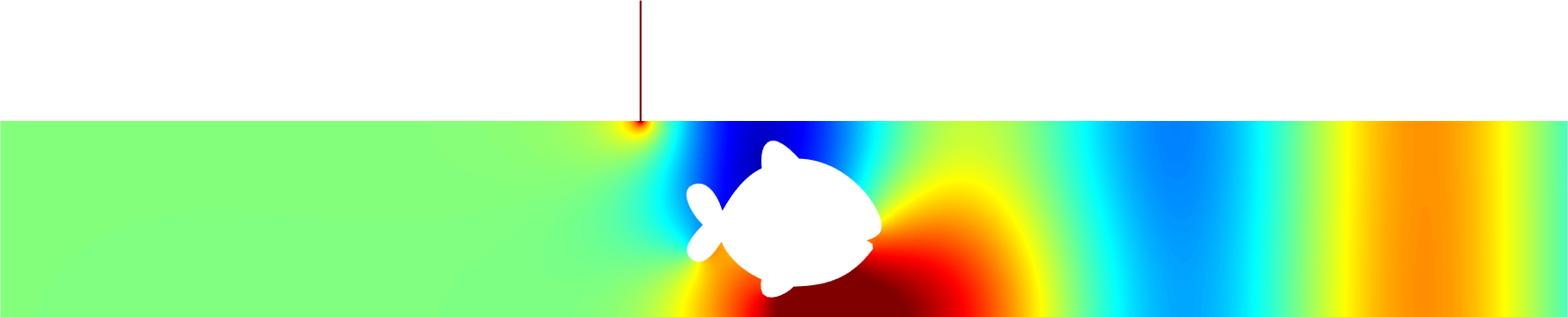}
\caption{Same quantities as in Figure \ref{FigFish01} but with a thinner resonator (here $\eps=0.01$).\label{FigFish001}}
\end{figure}

\begin{figure}[!ht]
\centering
\includegraphics[trim={0cm 0cm 0cm 0cm},clip,height=0.3\textwidth]{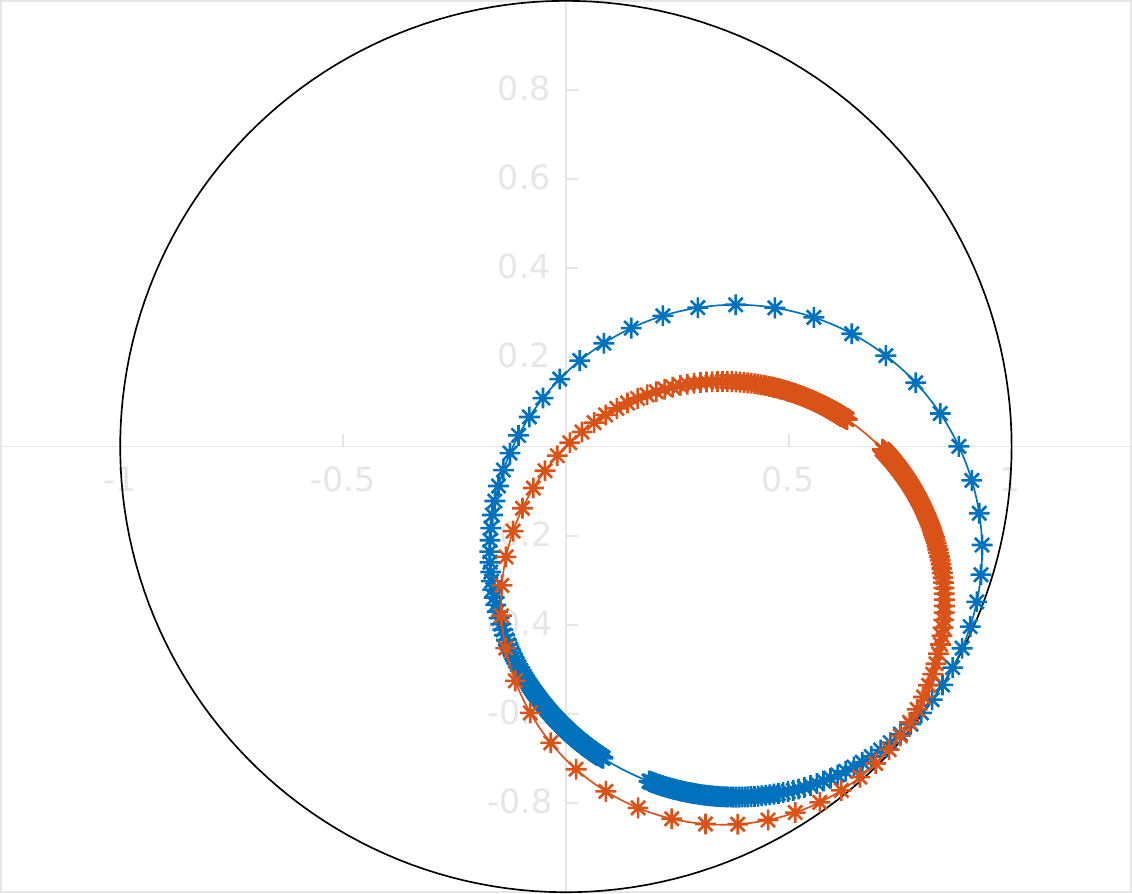}\qquad\raisebox{-1mm}{\includegraphics[trim={0cm 0cm 0cm 0cm},clip,height=0.3\textwidth]{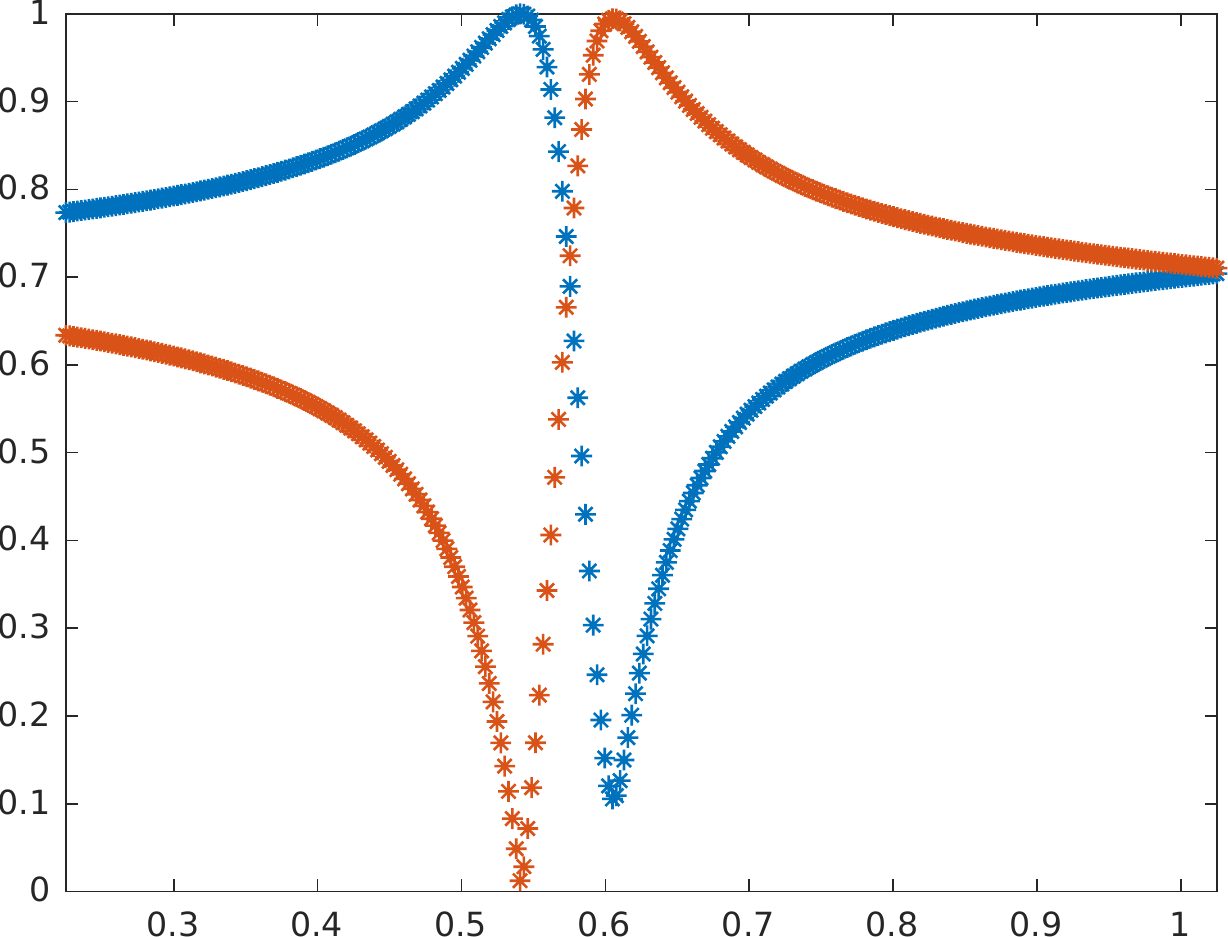}}\\[5pt]
\includegraphics[trim={0cm 0cm 0cm 0cm},clip,height=0.3\textwidth]{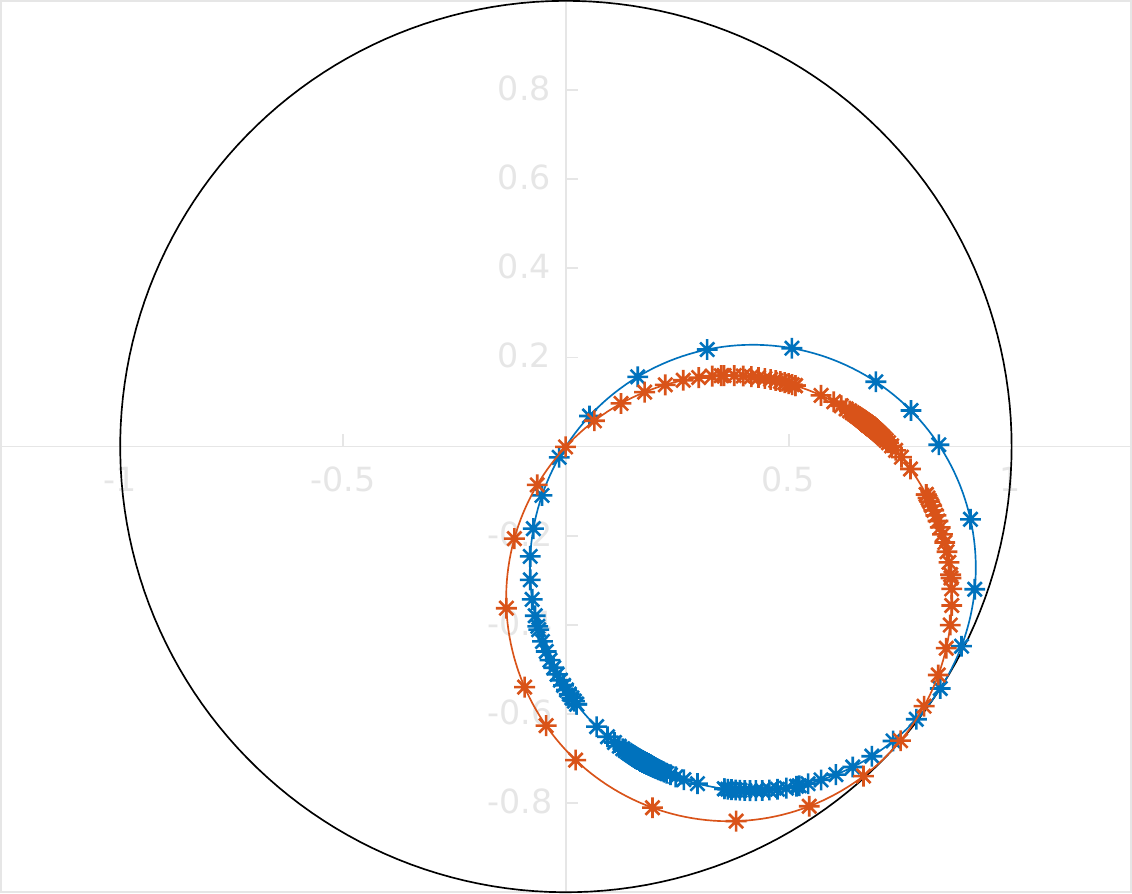}\qquad\raisebox{-1mm}{\includegraphics[trim={0cm 0cm 0cm 0cm},clip,height=0.3\textwidth]{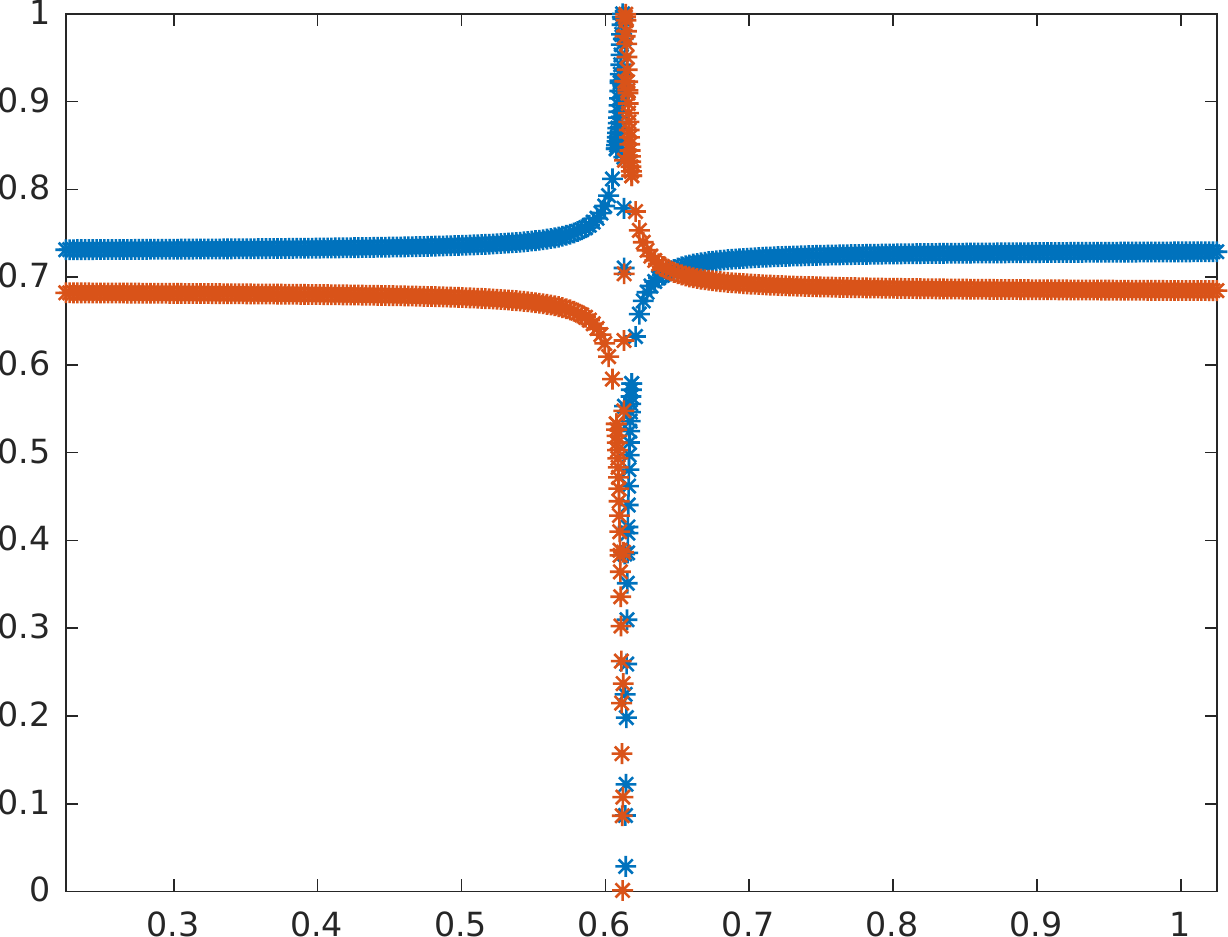}}\\
\caption{Left: curves $\ell\mapsto R^{\eps}_+(\ell)$ (\raisebox{0.3mm}{\textcolor{blue}{\scriptsize{$\rlap{+}{\times}$}}}) and $\ell\mapsto T^{\eps}(\ell)$ (\raisebox{0.3mm}{\textcolor{red}{\scriptsize{$\rlap{+}{\times}$}}}) in the complex plane. The thin coloured circles are the best circles which fit the data. According to the conservation of energy, we have $|R^{\eps}_+(\ell)|^2+|T^{\eps}(\ell)|^2=1$. Therefore the scattering coefficients are located inside the unit disk delimited by the black bold line. Right: curves $\ell\mapsto |R^{\eps}_+(\ell)|$ (\raisebox{0.3mm}{\textcolor{blue}{\scriptsize{$\rlap{+}{\times}$}}}) and $\ell\mapsto |T^{\eps}(\ell)|$ (\raisebox{0.3mm}{\textcolor{red}{\scriptsize{$\rlap{+}{\times}$}}}). We work in the geometries of Figures \ref{FigFish01}, \ref{FigFish001}: $\eps=0.3$ for the first line while $\eps=0.01$ for the second. Here $\ell$ takes values close to $\ell^0=\pi/(2\om)=0.625$. 
\label{FigFishMatSca}}
\end{figure}

\noindent In Figure \ref{FigFish01}--\ref{FigFishMatSca}, we work in a geometry with an obstacle and one resonator. In Figure \ref{FigFish01} ($\eps=0.3$) and in Figure \ref{FigFish001} ($\eps=0.01$), we tune the length of the resonator to get almost zero reflection. Note though the obstacle is rather big, due the well-tuned thin resonator, the scattered field $u^\eps-\mrm{w}^+$ is indeed exponentially decaying in the incoming (left) branch. The position of the resonator has been determined by finding positions $A$ such that numerically identity (\ref{WithoutApprox}) is satisfied. As expected, in Figure \ref{FigFishMatSca} left (blue circles), we remark that the reflection gets even smaller at the particular $\ell^{\eps}_{\star}$ introduced in Remark \ref{RemarkBelowCritical} as $\eps$ tends to zero. On the other hand, in accordance with what is described in Figure \ref{Illustration}, we observe in Figure \ref{FigFishMatSca} right that the variation of the scattering coefficients becomes even faster as $\eps$ tends to zero. As a consequence, when $\eps$ is very small, it is more delicate to tune the length of the resonator to get almost zero reflection. Thus there is a compromise to find between small reflection and robustness with respect to perturbations of the length of the resonator (see also item $vi)$ of Section \ref{SectionConclusion} for a method to improve the quality of the almost zero reflection). Besides, in Figure \ref{FigFishMatSca} right, in accordance with Remark \ref{RemarkBelowCritical}, we note that the length of the resonator such that we observe almost zero reflection converges to $\pi/(2\om)=0.625$ as $\eps$ tends to zero and is smaller than $\pi/(2\om)$.

\begin{figure}[!ht]
\centering
\includegraphics[width=0.48\textwidth]{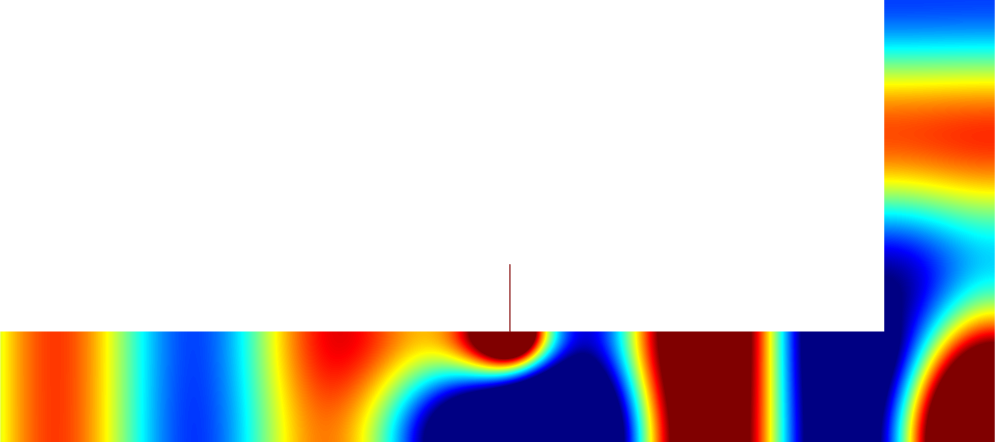}\quad\ \includegraphics[width=0.48\textwidth]{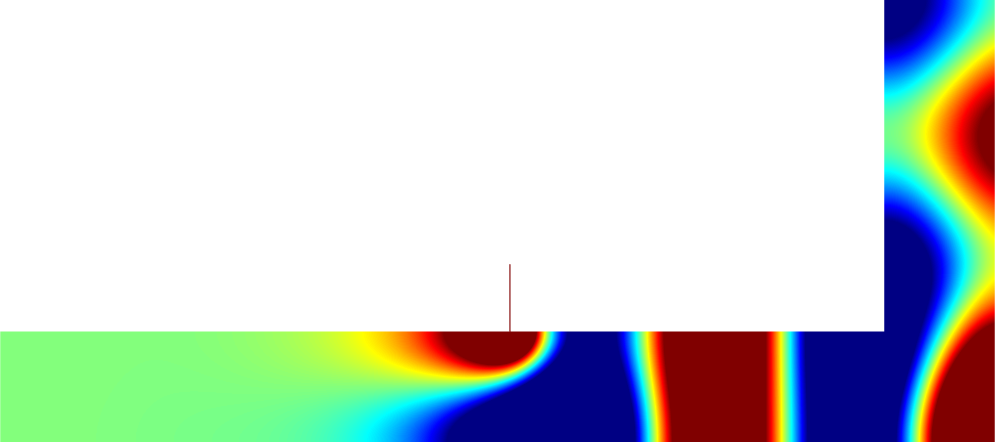}
\caption{Real parts of $u^\eps$ (left) and of $u^\eps-\mrm{w}^+$ (right). The length of the resonator is tuned to get almost zero reflection. Here $\eps=0.01$.\label{FigCoude}}
\end{figure}

\noindent In Figure \ref{FigCoude}, we work in a $L$-shaped waveguide which does not enter strictly the framework introduced in Section \ref{SectionSetting} because it does not coincide with the reference strip $S$ outside of a compact region. However the analysis can be adapted in a straightforward manner and we can find positions and lengths of the resonator to have almost zero reflection. Let us emphasize that the angle between the two branches does not need to be right and can take any value.

\subsection{Additional comments}

\subsubsection{Symmetric geometry} 
Assume that the initial waveguide $\Om$ is symmetric with respect to the $(Oy)$ axis, \textit{i.e.} assume that there holds $\Om=\{(-x,y)\,|\,(x,y)\in\Om\}$. In this case, the function $W^{\pm}$ satisfy $W^+(x,y)=W^-(-x,y)$ for all $(x,y)\in\Om$. As a consequence, if we take $p=0$ and so $A=(0,1)$, there holds $W^+(A)=W^-(A)$. Moreover when $\Om$ is symmetric with respect to the $(Oy)$ axis, we have $R_+=R_-$ so that the identities (\ref{IdentityStructure}) assessed in $A$ simply write
\[
W^+(A)=\overline{W^+(A)}(R_++T).
\]
Then the formulas (\ref{SolutionSystem}) become 
\[
\dsp R^{0}_+=R_+-\cfrac{i(R_++T)}{2\eta+i}\ ,\qquad\qquad T^{0}=T-\cfrac{i(R_++T)}{2\eta+i}.
\]
We deduce that $\eta\mapsto R^{0}_+(\eta)$ passes through zero if and only if $\Re e\,(T/R_+)=0$. But the unitarity of the scattering matrix (see (\ref{RelConsNRJ})) together with the relation $R_+=R_-$ guarantee that $\Re e\,(\overline{R_+}T)=0$, which indeed implies $\Re e\,(T/R_+)=0$. This proves the following statement.
\begin{proposition}\label{PropositionZeroRefSym}
Assume that $R_+\ne0$, $T\ne0$ and that $\Om$ is symmetric with respect to the $(Oy)$ axis. Set $A=(0,1)$. Then there holds $R^{0}_+(\eta)=0$ for some $\eta\in\R$. In this geometry $\Om^{\eps}$, Proposition \ref{PropoAsymOneLig} ensures that one can make $R_{+}^{\eps}$ as small as one wishes by taking $\eps$ small enough and by tuning correctly the length of the resonator $L^{\eps}$.
\end{proposition}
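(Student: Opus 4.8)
The plan is to use the $(Oy)$-symmetry to collapse the general formulas (\ref{SolutionSystem}) for $R^0_+$ and $T^0$ into a one-parameter family of points on a circle in the complex plane, and then to check, using only the unitarity of $\mathbb{S}$, that this circle passes through the origin. The symmetry reductions come first: since $\Om$ is invariant under $(x,y)\mapsto(-x,y)$ and the scattering solutions $W^\pm$ are uniquely determined (up to orthogonality against trapped modes), one has $W^+(x,y)=W^-(-x,y)$ on $\Om$ and $R_+=R_-$. With the junction point on the axis, $A=(0,1)$, this forces $W^+(A)=W^-(A)$. Evaluating the structure identity (\ref{IdentityStructure}) at $A$ and using $R_+=R_-$ and $W^+(A)=W^-(A)$, its two lines coincide and give $W^+(A)=\overline{W^+(A)}\,(R_++T)$; multiplying by $W^+(A)$ then yields $(W^+(A))^2=|W^+(A)|^2(R_++T)$.

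Next, inserting $W^+(A)=W^-(A)$ and this last relation into (\ref{SolutionSystem}) and cancelling the nonzero factor $|W^+(A)|^2$ — equivalently, rescaling the free real parameter $\eta$, which is legitimate because $W^+(A)\ne0$ — one obtains
\[
R^0_+(\eta)=R_+-\cfrac{i(R_++T)}{2\eta+i},\qquad\quad T^0(\eta)=T-\cfrac{i(R_++T)}{2\eta+i}.
\]
As in the proof of Proposition \ref{PropositionZeroRef}, the Möbius map $\eta\mapsto(2\eta+i)^{-1}$ sends $\R$ onto a circle, so $\eta\mapsto R^0_+(\eta)$ describes a circle; solving $R^0_+(\eta)=0$ explicitly gives $\eta=iT/(2R_+)$, which is real if and only if $\Re e\,(T/R_+)=0$. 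To close, the unitarity relations (\ref{RelConsNRJ}) combined with $R_+=R_-$ give $2\Re e\,(\overline{R_+}\,T)=0$, hence, since $R_+\ne0$, $\Re e\,(T/R_+)=\Re e\,(\overline{R_+}\,T)/|R_+|^2=0$. So the required $\eta\in\R$ exists, $R^0_+(\eta)=0$, and choosing the length of $L^\eps$ according to (\ref{DefRegLong}) with this $\eta$, Proposition \ref{PropoAsymOneLig} yields $R^\eps_+=R^0_+(\eta)+o(1)=o(1)$ as $\eps\to0$.

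The point requiring the most care, and what I regard as the main obstacle, is the nonvanishing $W^+(A)\ne0$ tacitly used above: if $W^+(A)=0$ then $a=0$ in (\ref{SolutionSystem}), the resonator leaves the leading-order field unchanged, and $R^0_+=R_+\ne0$. On the axis one has $W^+(0,y)=(W^++W^-)(0,y)/2$ (the antisymmetric part of $W^+$ vanishes at $x=0$), and one should verify — or assume generically — that this does not vanish at $y=1$; should it vanish, the fallback is to drop exact symmetry and invoke Proposition \ref{PropositionZeroRef} with the resonator placed far from the obstacle.
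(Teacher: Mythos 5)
Your argument is correct and follows essentially the same route as the paper: use the symmetry to get $W^+(A)=W^-(A)$ and $R_+=R_-$, reduce (\ref{SolutionSystem}) via the structure identity $W^+(A)=\overline{W^+(A)}(R_++T)$ to $R^0_+(\eta)=R_+-i(R_++T)/(2\eta+i)$, and conclude from $\Re e\,(\overline{R_+}T)=0$ that the circle passes through the origin. Your extra remark about the tacit hypothesis $W^+(A)\ne0$ (needed both to rescale $\eta$ by $|W^+(A)|^2$ and to avoid the degenerate case $a=0$, $R^0_+\equiv R_+$) is a legitimate point that the paper's own proof passes over silently.
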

\begin{remark}
Note that contrary to the previous paragraph, here the resonator does not need to be placed ``far'' from the obstacle. Observe also that for another $\eta$, we have $T^0(\eta)=0$.
\end{remark}
\begin{remark}\label{RmkZeroRefExact}
Let us mention that by exploiting the symmetry with respect to the $(Oy)$ axis, we can prove that there is $\eps_0$ small enough such that for all $\eps\in(0;\eps_0]$, the map $\ell\mapsto R^{\eps}_+(\ell)$ passes through zero exactly and not only asymptotically. To show this result, the idea is to decompose $u^{\eps}$ as $u^{\eps}=(u^{\eps}_N+u^{\eps}_D)/2$ where $u^{\eps}_N$, $u^{\eps}_D$ are scattering solutions of problems set in the half-waveguide geometry $\{(x,y)\in\Om^{\eps}\,|\,x<0\}$ with a homogeneous Neumann or Dirichlet boundary condition on the segment $\{(x,y)\in\Om^{\eps}\,|\,x=0\}$. Due to conservation of energy, the corresponding scattering coefficients $R^{\eps}_N$, $R^{\eps}_D$ have modulus equal to one and there holds $R^{\eps}_+=(R^{\eps}_N+R^{\eps}_D)/2$. For $\eps$ small, for $\ell$ varying in a neighbourhood of the resonant length, $R^{\eps}_D$ does not move much on the unit circle while $R^{\eps}_N$ makes a complete lap. As a consequence, there is one $\ell$ such that $ R^{\eps}_+(\ell)=0$. For more details, we refer the reader to \cite{ChNa18}.
\end{remark}

\noindent In Figure \ref{FigStar}, \ref{FigStarMatSca}, we work in a geometry $\Om^{\eps}$ which is symmetric with respect to the $(Oy)$ axis. A thin resonator is located at the abscissa $x=0$. In accordance with Proposition \ref{PropositionZeroRefSym}, we see that we can tune its length to obtain zero reflection. More precisely, though we deliberately choose a quite large $\eps$ ($\eps=0.3$), as explained in Remark \ref{RmkZeroRefExact}, we observe that we can get exact zero reflection, which is not the case in general in absence of symmetry, see e.g. Figure \ref{FigFishMatSca} top left.

\begin{figure}[!ht]
\centering
\includegraphics[width=0.8\textwidth]{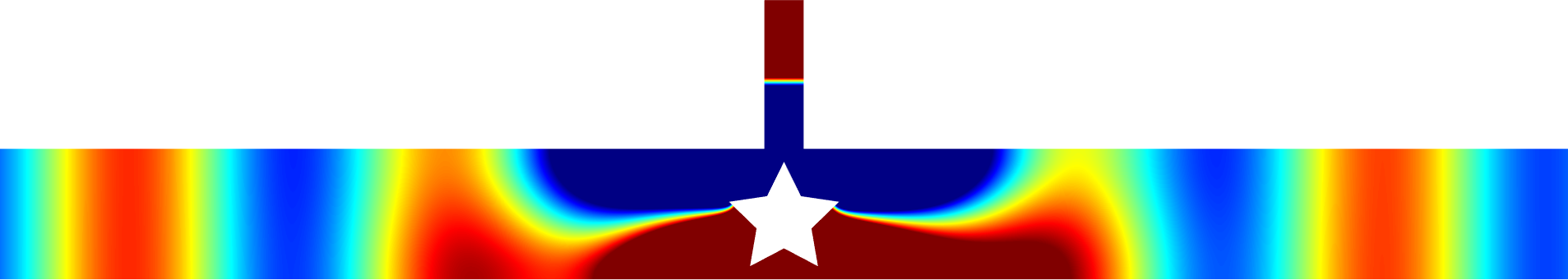}\\[5pt]
\includegraphics[width=0.8\textwidth]{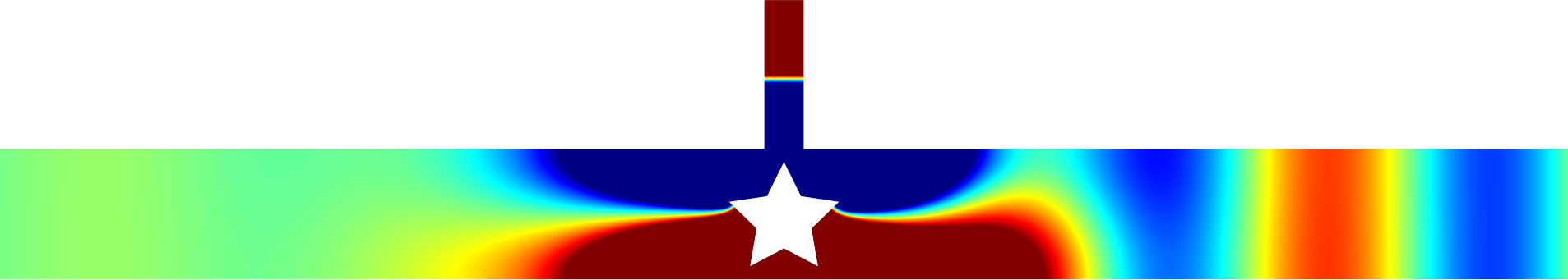}\\
\caption{Real parts of $u^\eps$ (top) and of $u^\eps-\mrm{w}^+$ (bottom). Due to the symmetry of the geometry with respect to the $(Oy)$ axis, we are able to tune the length of the resonator to get exact zero reflection (Remark \ref{RmkZeroRefExact}). Here $\eps=0.3$.\label{FigStar}}
\end{figure}

\begin{figure}[!ht]
\centering
\includegraphics[trim={0cm 0cm 0cm 0cm},clip,width=0.4\textwidth]{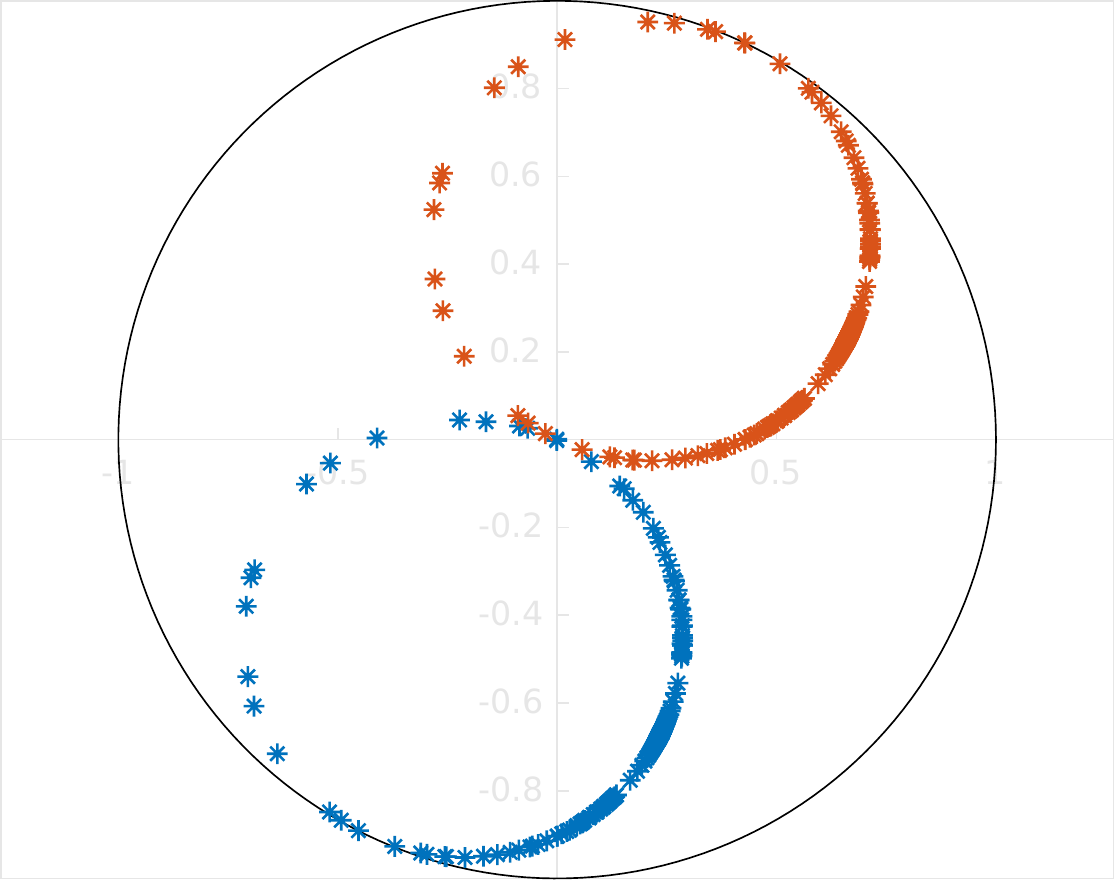}
\caption{Curves $\ell\mapsto R^{\eps}_+(\ell)$ (\raisebox{0.3mm}{\textcolor{blue}{\scriptsize{$\rlap{+}{\times}$}}}) and $\ell\mapsto T^{\eps}(\ell)$ (\raisebox{0.3mm}{\textcolor{red}{\scriptsize{$\rlap{+}{\times}$}}}) in the complex plane in the geometry of Figure \ref{FigStar}.  Here $\ell$ takes values close to $\ell^0=\pi/(2\om)=0.625$ and $\eps=0.3$. Note that the curve for the reflection coefficient passes exactly through zero.
\label{FigStarMatSca}}
\end{figure}

\subsubsection{Zero transmission}\label{ParagZeroT}

In this paragraph, we make a small digression concerning zero transmission, that is $T^{\eps}(\ell)=0$. This situation corresponds to the existence of a standing wave. All the energy of the incident wave is backscattered as if the waveguide were obstructed. This has no link with cloaking but could be interesting for other applications. For example, it can be used to construct waveguides supporting trapped modes, see \cite{ChPa19}. This result will be also useful in the proof of Proposition \ref{PropositionCloakingOneLig} below. Let us forget about the symmetry assumption of $\Om$ made in the previous section and consider a general $A\in\partial\Om$. Starting from the formulas (\ref{SolutionSystem}), we see that asymptotically the transmission coefficient is zero for a certain $\eta$ if and only if 
\begin{equation}\label{CriterionZeroT}
4\eta+i(|W^+(A)|^2+|W^-(A)|^2)=2iW^{+}(A)W^{-}(A)/T.
\end{equation}
At this point, to be rigorous, again we must exclude the case $T=0$. Identity (\ref{CriterionZeroT}) is valid for a certain $\eta\in\R$ if and only if there holds
\begin{equation}\label{identityZeroTransmission}
|W^+(A)|^2+|W^-(A)|^2=2\Re e\,(W^{+}(A)W^{-}(A)/T).
\end{equation}
But using relations (\ref{IdentityStructure}) assessed in $A$, we obtain the two following formulas
\begin{equation}\label{formula1}
2\Re e\,(W^{+}(A)W^{-}(A)/T)=2|W^+(A)|^2+2\Re e\,(W^{+}(A)\overline{W^{-}(A)}R_-/T)
\end{equation}
\begin{equation}\label{formula2}
2\Re e\,(W^{+}(A)W^{-}(A)/T)=2|W^-(A)|^2+2\Re e\,(\overline{W^{+}(A)}W^{-}(A)R_+/T)
\end{equation}
On the other hand, the unitary property of $\mathbb{S}$ imposes that $R_+\overline{T}+\overline{R_-}T=0$ (see (\ref{RelConsNRJ})) so that $R_+/T=-\overline{R_-}/\overline{T}$. Therefore summing (\ref{formula1}) and (\ref{formula2}), we find that (\ref{identityZeroTransmission}) is indeed satisfied. This guarantees that there is $\eta_{\bullet}$ such that 
$T^0(\eta_{\bullet})=0$. Now using again the structure of the scattering matrix, working as \cite[Theorem 5.1]{ChNa20bis} (see also \cite{Lee99}), one can show that if the transmission coefficient passes very close to zero, necessarily it goes exactly through zero. In other words, zero transmission occurs not only asymptotically but also exactly for $\eps$ small enough. We sketch the proof. For a given $\eps$, denote by $\ell^{\eps}_{\bullet}$ the length corresponding to $\eta_{\bullet}$ defined by (\ref{DefRegLong}). Denote also by $I^{\eps}$ the interval of lengths associated with $\eta\in[\eta_{\bullet}-\sqrt{\eps};\eta_{\bullet}+\sqrt{\eps}]$ according to (\ref{DefRegLong}). The idea is to work by contradiction and to assume that $\ell\mapsto T^{\eps}(\ell)$ does not vanish on $I^{\eps}$. In this case, since the scattering matrix is unitary, we have
\begin{equation}\label{ZeroTScaMat}
R_+^{\eps}(\ell)/\overline{R_-^{\eps}(\ell)}=-T^{\eps}(\ell)/\overline{T^{\eps}(\ell)}\,.
\end{equation}
If $\ell\mapsto T^{\eps}(\ell)$ passes very close to zero but not through zero, the right hand side of (\ref{ZeroTScaMat}) runs rapidly on the unit circle on $I^{\eps}$. On the other hand, the left hand side of (\ref{ZeroTScaMat}) converges to $R_+^{0}(\eta_{\bullet})/\overline{R_-^{0}(\eta_{\bullet})}$ ($R_-^{0}(\eta_{\bullet})$ is the main term in the asymptotics of $R_-^{\eps}(\ell^{\eps}_{\bullet})$). This yields a contradiction and ensures that $\ell\mapsto T^{\eps}(\ell)$ passes exactly through zero for $\eps$ small enough. We summarize this result in the following statement.
\begin{proposition}\label{propositionZeroT}
Assume that $T\ne0$. There is $\eps_0>0$ such that for all $\eps\in(0;\eps_0]$, there is $\ell^{\eps}_{\star}$ close to the resonant length $\pi(m+1/2)/\om$, such that $T^{\eps}(\ell^{\eps}_{\star})=0$. Here $T^{\eps}(\ell^{\eps}_{\star})$ stands for the transmission coefficient introduced in (\ref{decompoMainTermPerturbed}) in a geometry with a resonator $L^{\eps}=(p-\eps/2;p+\eps/2)\times[1;1+\ell^{\eps}_{\star})$.
\end{proposition}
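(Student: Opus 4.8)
The plan is to follow essentially the argument already sketched immediately before the statement of Proposition \ref{propositionZeroT}, organizing it into two clear stages: first the \emph{asymptotic} existence of a length giving zero transmission, then an \emph{exact} existence result via a contradiction argument exploiting unitarity of the scattering matrix.

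\textbf{Step 1: asymptotic zero transmission.} Starting from the formulas \eqref{SolutionSystem}, I would observe that $T^0(\eta)=0$ holds for some $\eta\in\R$ precisely when \eqref{CriterionZeroT} can be solved with real $\eta$, i.e.\ when the imaginary parts match: this is exactly condition \eqref{identityZeroTransmission}. The key point is that this condition is \emph{automatically satisfied} for every admissible junction point $A\in\partial\Om$, as a consequence of the structural identities \eqref{IdentityStructure} evaluated at $A$ together with the unitarity relation $R_+\overline{T}+\overline{R_-}T=0$ from \eqref{RelConsNRJ}. Concretely, \eqref{IdentityStructure} at $A$ gives the two expressions \eqref{formula1} and \eqref{formula2} for $2\Re e\,(W^+(A)W^-(A)/T)$; averaging them and using $R_+/T=-\overline{R_-}/\overline{T}$ makes the cross terms cancel and reproduces the right-hand side of \eqref{identityZeroTransmission}. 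Hence there exists $\eta_\bullet\in\R$ with $T^0(\eta_\bullet)=0$, and by Proposition \ref{PropoAsymOneLig} the corresponding length $\ell^\eps_\bullet$ (defined through \eqref{DefRegLong} with $\eta=\eta_\bullet$) yields $T^\eps(\ell^\eps_\bullet)=o(1)$.

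\textbf{Step 2: from asymptotic to exact.} To upgrade $o(1)$ to an exact zero I would argue by contradiction, following \cite[Theorem 5.1]{ChNa20bis}. Fix the resonant index $m$ and let $I^\eps$ be the interval of lengths associated via \eqref{DefRegLong} with $\eta$ ranging over $[\eta_\bullet-\sqrt\eps;\eta_\bullet+\sqrt\eps]$. Suppose $\ell\mapsto T^\eps(\ell)$ does not vanish on $I^\eps$. Since the $2\times2$ scattering matrix in the perturbed geometry is unitary, $|R_+^\eps|^2+|T^\eps|^2=1$ and $R_+^\eps\overline{T^\eps}+\overline{R_-^\eps}T^\eps=0$, whence the identity \eqref{ZeroTScaMat}, namely $R_+^\eps(\ell)/\overline{R_-^\eps(\ell)}=-T^\eps(\ell)/\overline{T^\eps(\ell)}$. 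On the right-hand side, as $\ell$ sweeps $I^\eps$ the value $T^\eps(\ell)$ passes within $o(1)$ of $0$ while—by the behaviour of $T^0(\eta)$ near $\eta_\bullet$, a nonzero transversal crossing visible from \eqref{SolutionSystem}—its argument makes essentially a full turn, so $-T^\eps/\overline{T^\eps}$ runs rapidly around the whole unit circle. On the left-hand side, however, $R_+^\eps(\ell)$ and $R_-^\eps(\ell)$ converge uniformly on $I^\eps$ to $R_+^0(\eta_\bullet)$ and $R_-^0(\eta_\bullet)$ respectively (both nonzero, since $|T^0(\eta_\bullet)|=0$ forces $|R_\pm^0(\eta_\bullet)|=1$ by \eqref{RelConsNRJ}), so the left-hand side varies only by $o(1)$. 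This contradiction shows $T^\eps$ must vanish at some $\ell^\eps_\star\in I^\eps$, and since $|I^\eps|\to0$ we have $\ell^\eps_\star\to\pi(m+1/2)/\om$, giving the claim with $L^\eps=(p-\eps/2;p+\eps/2)\times[1;1+\ell^\eps_\star)$.

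\textbf{Main obstacle.} The delicate point is Step 2: making rigorous the statement that $-T^\eps/\overline{T^\eps}$ ``makes a complete lap'' on $I^\eps$. This requires a quantitative version of Proposition \ref{PropoAsymOneLig}—a uniform remainder estimate $T^\eps(\ell)=T^0(\eta)+o(1)$ together with control of $\partial_\eta T^0(\eta_\bullet)\ne0$—so that one can guarantee the argument of $T^\eps$ genuinely winds around while $\ell$ traverses the shrinking window $I^\eps$ of width $O(\sqrt\eps\,\eps)$ (up to the $|\ln\eps|$ factor in \eqref{DefRegLong}). The cleanest route, as the authors indicate, is to invoke the winding/degree argument of \cite[Theorem 5.1]{ChNa20bis} rather than to redo these estimates; everything else is a direct consequence of the unitarity relations \eqref{RelConsNRJ}, the structural identities \eqref{IdentityStructure}, and the already-established asymptotics \eqref{SolutionSystem}.
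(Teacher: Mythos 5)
Your proposal is correct and follows essentially the same route as the paper: Step 1 reproduces the paper's verification of \eqref{identityZeroTransmission} via \eqref{formula1}, \eqref{formula2} and the unitarity relation $R_+/T=-\overline{R_-}/\overline{T}$, and Step 2 is exactly the paper's contradiction argument based on \eqref{ZeroTScaMat}, with the right-hand side sweeping the unit circle while the left-hand side converges to $R_+^0(\eta_\bullet)/\overline{R_-^0(\eta_\bullet)}$. The ``main obstacle'' you flag (quantifying the winding of $-T^\eps/\overline{T^\eps}$ on $I^\eps$) is precisely the point the paper also leaves at the level of a sketch, deferring to \cite[Theorem 5.1]{ChNa20bis}.
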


\begin{remark}
Note that this result of zero transmission is different from the result of zero reflection for two reasons. First, it holds without assumption on the position of the resonator (there is no assumption on $p$). Second, it holds exactly whereas zero reflection in general is obtained only asymptotically. Thus we can say that zero transmission is simpler to achieve than zero reflection with this mechanism.
\end{remark}

\noindent In Figure \ref{FigZeroT} we display the imaginary part of $u^\eps$ in the reference strip with one resonator. The length of the resonator is tuned to get exact zero transmission which is possible according to Proposition \ref{propositionZeroT}. We display the imaginary part of $u^\eps$ because here $R^\eps_+=-1$ so that the field is purely imaginary at infinity. It is remarkable to see the influence of the resonator. Though it is very thin ($\eps=0.01$), it stops the propagation of the wave and everything happens as if the waveguide were obstructed. Note that we also observe in Figures \ref{FigFishMatSca}, \ref{FigStarMatSca} that the curve for the transmission coefficient always passes exactly through zero (even when $\eps$ is not that small).

\begin{figure}[!ht]
\centering
\includegraphics[width=0.8\textwidth]{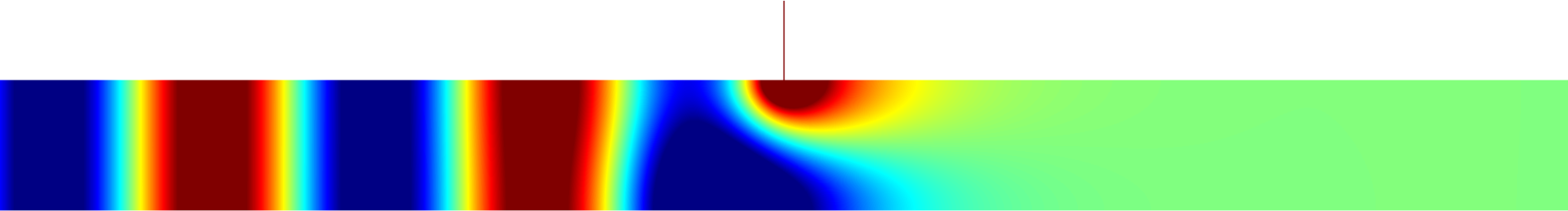}
\caption{Imaginary part of $u^\eps$. The length of the resonator is tuned to get exact zero transmission (Proposition \ref{propositionZeroT}). Here $\eps=0.01$.\label{FigZeroT}}
\end{figure}

\section{Phase shifter}\label{SectionPhaseShift}
In the previous section, we explained how to annihilate the reflection. When the reflection is zero, due to conservation of energy, the modulus of the transmission coefficient is equal to one. But in general the phase of this transmission coefficient can be non zero and we have to compensate this phase shift to cloak completely the obstacle. In this section, we show how to design geometries, that we call phase shifters, where the reflection coefficient is zero and the transmission coefficient can take any value on the unit circle. To proceed, we work in the unperturbed strip with two thin resonators. We propose two ways of establishing the result: one where we use twice successively the asymptotic analysis of \S\ref{paragraphAsympto} with one thin resonator and another where we compute an asymptotic expansion directly with the two thin resonators.

\subsection{Successive asymptotic analysis}\label{paragraphSuccessive}

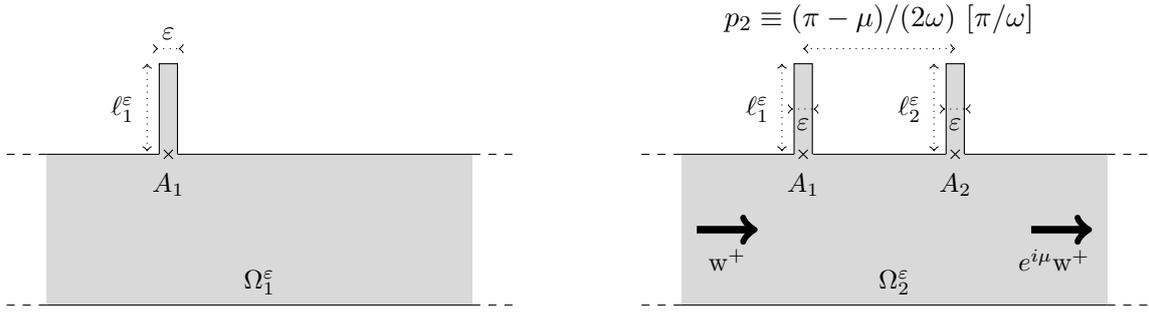
\begin{figure}[!ht]
\centering
\begin{tikzpicture}[scale=2]
\draw[fill=gray!30,draw=none](-1.4,0) rectangle (1.4,1);
\draw (-1.4,0)--(1.4,0);
\draw (-1.4,1)--(1.4,1);
\draw[dotted,<->] (-0.74,1.02)--(-0.74,1.6);
\draw[dotted,>-<] (-0.7,1.7)--(-0.5,1.7);
\node at (-0.6,1.8){\small $\eps$};
\node at (-0.9,1.3){\small $\ell^\eps_1$};
\node at (-0.6,0.8){\small $A_1$};
\draw[fill=gray!30,draw=none](-0.66,0.9) rectangle (-0.54,1.6);
\draw (-0.66,1)--(-0.66,1.6)--(-0.54,1.6)-- (-0.54,1);
\draw[dashed] (-1.4,0)--(-1.7,0);
\draw[dashed] (-1.4,1)--(-1.7,1);
\draw[dashed] (1.4,0)--(1.7,0);
\draw[dashed] (1.4,1)--(1.7,1);
\draw (-0.63,0.97)--(-0.57,1.03);
\draw (-0.63,1.03)--(-0.57,0.97);
\node at (0,0.15){\small $\Om^{\eps}_1$};
\end{tikzpicture}\qquad\qquad\begin{tikzpicture}[scale=2]
\draw[fill=gray!30,draw=none](-1.4,0) rectangle (1.4,1);
\draw (-1.4,0)--(1.4,0);
\draw (-1.4,1)--(1.4,1);
\draw[fill=gray!30,draw=none](-0.66,0.9) rectangle (-0.54,1.6);
\draw[dotted,<->] (-0.74,1.02)--(-0.74,1.6);
\draw[dotted,>-<] (-0.7,1.3)--(-0.5,1.3);
\node at (-0.9,1.3){\small $\ell^\eps_1$};
\node at (-0.6,0.8){\small $A_1$};
\node at (-0.6,1.2){\small $\eps$};
\draw (-0.66,1)--(-0.66,1.6)--(-0.54,1.6)-- (-0.54,1);
\draw (-0.63,0.97)--(-0.57,1.03);
\draw (-0.63,1.03)--(-0.57,0.97);
\begin{scope}[shift={(1,0)}]
\draw[fill=gray!30,draw=none](-0.66,0.9) rectangle (-0.54,1.6);
\draw[dotted,<->] (-0.74,1.02)--(-0.74,1.6);
\draw[dotted,>-<] (-0.7,1.3)--(-0.5,1.3);
\node at (-0.6,1.2){\small $\eps$};
\node at (-0.9,1.3){\small $\ell^\eps_2$};
\node at (-0.6,0.8){\small $A_2$};
\draw (-0.66,1)--(-0.66,1.6)--(-0.54,1.6)-- (-0.54,1);
\draw (-0.63,0.97)--(-0.57,1.03);
\draw (-0.63,1.03)--(-0.57,0.97);
\end{scope}
\draw[->,line width=1mm] (-1.3,0.5)--(-0.9,0.5);
\draw[<-,line width=1mm] (1.3,0.5)--(0.9,0.5);
\draw[dashed] (-1.4,0)--(-1.7,0);
\draw[dashed] (-1.4,1)--(-1.7,1);
\draw[dashed] (1.4,0)--(1.7,0);
\draw[dashed] (1.4,1)--(1.7,1);
\node at (0,0.15){\small $\Om^{\eps}_2$};
\node at (-1.1,0.3){\small $\mrm{w}^+$};
\node at (1.05,0.3){\small $e^{i\mu}\mrm{w}^+$};
\draw[dotted,<->] (-0.6,1.7)--(0.4,1.7);
\node at (-0.1,1.9){$ p_2\equiv (\pi-\mu)/(2\om)$ $[\pi/\om]$};
\end{tikzpicture}
\caption{Geometry of $\Om_1^\eps$ (left) and schematic picture of the final phase shifter $\Om^\eps_{2}$ (right). \label{StripWithLigaments}} 
\end{figure}

As already said, we start from the unperturbed strip $S=\R\times(0;1)$. In this case, we simply have $W^{\pm}=\mrm{w}^{\pm}$ in (\ref{Field1})--(\ref{Field2}) so that $R_+=0$ and $T=1$. Set $A_1:=(p_1,1)$ with $p_1=0$ and, following (\ref{DefRegLong}), add at $A_1$ a resonator of length 
\[
\ell^{\eps}_1= \pi(m+1/2)/\om+\eps(\eta_1/\om-C_{\Xi}-\Re e\,\Gamma_1-\pi^{-1}|\ln\eps|)
\]
for a certain $m\in\N$ and $\eta_1\in\R$. Here $\Gamma_1$ is defined as in (\ref{DefGammaM}) from a $\mu_1$ satisfying (\ref{DefGamma}) in $S$. We set $\Om^{\eps}_1:=S\cup (-\eps/2;\eps/2)\times[1;1+\ell^{\eps}_1)$ (see Figure \ref{StripWithLigaments} left) and denote by $R_{1+}^{\eps}$, $T^{\eps}_1$ the scattering coefficients of (\ref{decompoMainTermPerturbed}) in $\Om^{\eps}_1$. According to Proposition \ref{PropoAsymOneLig}, when $\eps$ tends to zero, we have the expansions  
\begin{equation}\label{ScaLim}
\begin{array}{|lcl}
R_{1+}^{\eps}=R_{1+}^{0}+o(1)\\[4pt]
T^{\eps}_{1}=T_{1}^{0}+o(1)
\end{array}\qquad
\mbox{ with }\qquad
R_{1+}^{0}:=-\cfrac{i}{2\eta_1+i}\,,\qquad T_{1}^{0}:=1-\cfrac{i}{2\eta_1+i}\,.
\end{equation}
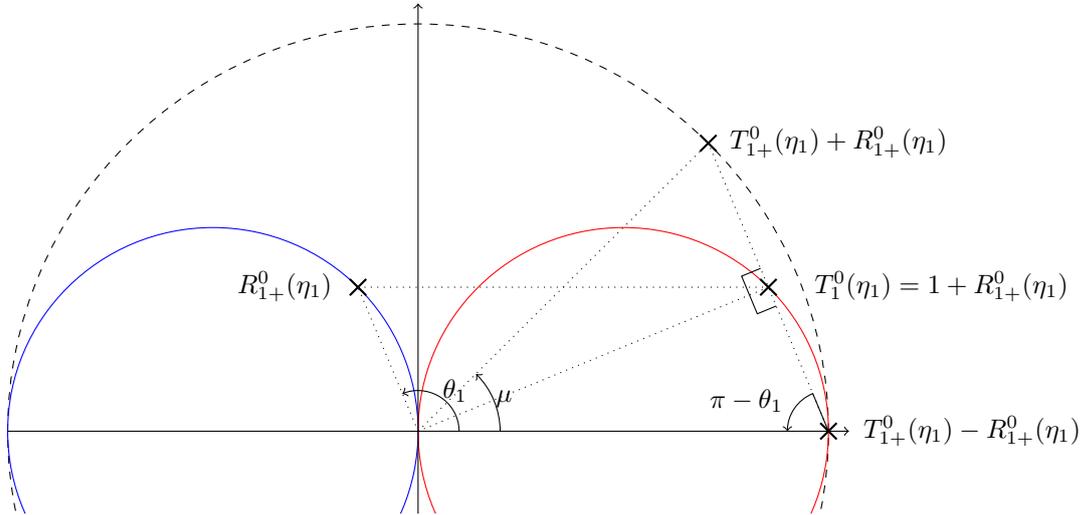
\begin{figure}[!ht]
\centering
\begin{tikzpicture}[scale=2.7]
\begin{scope}
\clip (-2,-0.4) -- (3.5,-0.4) -- (3.5,2.1) -- (-2,2.1) -- cycle;
\draw[dashed] (0,0) circle (2);
\draw[blue, thin] (-1,0) circle (1);
\draw[red, thin] (1,0) circle (1);
\draw[->] (0,-2.05)--(0,2.1);
\draw[->] (-2.05,0)--(2.1,0);
\begin{scope}[shift={(-0.292893219,0.707106781 )}]
\draw[thick] (0.04,0.04)--(-0.04,-0.04);
\draw[thick] (0.04,-0.04)--(-0.04,0.04);
\end{scope}
\begin{scope}[shift={(2,0)}]
\draw[thick] (0.04,0.04)--(-0.04,-0.04);
\draw[thick] (0.04,-0.04)--(-0.04,0.04);
\end{scope}
\begin{scope}[shift={(2,0)}]
\begin{scope}[shift={(-0.292893219,0.707106781 )}]
\draw[thick] (0.04,0.04)--(-0.04,-0.04);
\draw[thick] (0.04,-0.04)--(-0.04,0.04);
\end{scope}
\end{scope}
\begin{scope}[shift={(2-0.585786438,1.41421356 )}]
\draw[thick] (0.04,0.04)--(-0.04,-0.04);
\draw[thick] (0.04,-0.04)--(-0.04,0.04);
\end{scope}
\begin{scope}[shift={(2-0.292893219,0.707106781)},rotate=202.5]
\draw (0,0.1)--(0.1,0.1)--(0.1,0);
\end{scope}
\begin{scope}[shift={(2-0.292893219,0.707106781)},rotate=112.5]
\draw (0,0.1)--(0.1,0.1)--(0.1,0);
\end{scope}
\node at (-0.65,0.71){\small $R_{1+}^{0}(\eta_1)$};
\node at (2.55,0.71){\small $T_{1}^{0}(\eta_1)=1+R_{1+}^{0}(\eta_1)$};
\draw[dotted] (2,0)--+(-0.585786438,1.41421356 );
\draw[dotted] (0,0)--+(2-0.292893219,0.707106781);
\draw[dotted] (0,0)--+(-0.292893219,0.707106781);
\node at (2.7,0){\small $T_{1+}^{0}(\eta_1)-R_{1+}^{0}(\eta_1)$};
\node at (2.05,1.42){\small $T_{1+}^{0}(\eta_1)+R_{1+}^{0}(\eta_1)$};
\draw[->] (0.2,0) arc (0.2:112.5:0.2);
\node at (0.18,0.2){\small $\theta_1$};
\draw[->](2,0) -- +(112.5:0.2) arc (112.5:180:0.2);
\node at (2-0.4,0.15){\small $\pi-\theta_1$};
\draw[->] (0.4,0) arc (0.4:45:0.4);
\node at (0.42,0.16){\small $\mu$};
\draw[dotted] (0,0)--(2-0.585786438,1.41421356 );
\draw[dotted] (-0.292893219,0.707106781)--(2-0.292893219,0.707106781);
\end{scope}
\end{tikzpicture}
\caption{Sets $\{R_{1+}^{0}(\eta_1),\,\eta_1\in\R\}$ (left blue circle) and $\{T_{1}^{0}(\eta_1),\,\eta_1\in\R\}$ (right red circle) in the complex plane where $T_{1}^{0}(\eta_1)$, $R_{1+}^{0}(\eta_1)$ are defined in (\ref{ScaLim}). The dashed circle represents the unit circle. The points of affix $T_{1+}^{0}(\eta_1)\pm R_{1+}^{0}(\eta_1)$ correspond to the approximate transmission coefficients we get when we add a second resonator and tune its length to obtain almost zero reflection (see (\ref{quotient1})--(\ref{quotient2})). \label{IllustrationAsy}}
\end{figure}

\noindent In particular, we remark that there holds $T_{1}^{0}=1+R_{1+}^{0}$. The set $\{R_{1+}^{0}(\eta_1),\,\eta_1\in\R\}$ (resp. $\{T_{1}^{0}(\eta_1),\,\eta_1\in\R\}$) coincides with $\mathscr{C}(-1/2,1/2)\setminus\{0\}$ (resp. $\mathscr{C}(1/2,1/2)\setminus\{0\}$) (see the illustration of Figure \ref{IllustrationAsy}). Observe that we have the parametrisation
\[
\mathscr{C}(-1/2,1/2)\setminus\{0\}=\{-\cos\theta_1e^{i\theta_1}\,|\,\theta_1\in(\pi/2;3\pi/2)\}.
\]
Assume that $\ell^{\eps}_1>0$ has been fixed so that $R_{1+}^{\eps}\approx -\cos\theta_1e^{i\theta_1}$ for a certain $\theta_1\in(\pi/2;3\pi/2)$. The idea of our approach is to consider this resonator as a fixed given obstacle and to play with a second resonator. More precisely, set 
$A_2:=(p_2,1)$ with $p_2>0$ and, following (\ref{DefRegLong}), add at $A_2$ a resonator of length 
\[
\ell^{\eps}_2= \pi(m+1/2)/\om+\eps(\eta_2/\om-C_{\Xi}-\Re e\,\Gamma_2-\pi^{-1}|\ln\eps|)
\]
for a certain $\eta_2\in\R$. Here $\Gamma_2$ is defined as in (\ref{DefGammaM}) from a $\mu_2$ satisfying (\ref{DefGamma}) in $\Om_1^{\eps}$. In particular, a priori there holds $\Gamma_2\ne\Gamma_1$.
Define $\Om^{\eps}_2:=\Om^{\eps}_1\cup (p_2-\eps/2;p_2+\eps/2)\times[1;1+\ell^{\eps}_2)$ and denote by $R_{2+}^{\eps}$, $T^{\eps}_2$ the scattering coefficients of (\ref{decompoMainTermPerturbed}) in $\Om^{\eps}_2$. Using again the result of Proposition \ref{PropoAsymOneLig} which yields the asymptotic expansion of the scattering coefficients in presence of one thin resonator, for $\eps$ small, we get the expansions 
\begin{equation}\label{ScaLim2}
\begin{array}{c}
R_{2+}^{\eps}=R_{2+}^0+o(1)\qquad \mbox{ with }\qquad R_{2+}^{0}:=R_{1+}^{\eps}-\cfrac{2i(W^{+}_1(A_2))^2}{4\eta_2+i(|W^+_1(A_2)|^2+|W^-_1(A_2)|^2)}\\[14pt]
T^{\eps}_{2}=T_{2}^0+o(1)\qquad \mbox{ with }\qquad T_{2}^{0}:=T_{1}^{\eps}-\cfrac{2iW^{+}_1(A_2)W^{-}_1(A_2)}{4\eta_2+i(|W^+_1(A_2)|^2+|W^-_1(A_2)|^2)}\,.
\end{array}
\end{equation}
Here $W^\pm_1$ are the functions defined in (\ref{Field1}) in the geometry $\Om^{\eps}_1$. Using the approximation results $R_{1+}^{\eps}\approx R_{1+}^{0}$, $T^{\eps}_{1}\approx T_{1}^{0}$ and making the assumption that $p_2$ is sufficiently large so that we can neglect evanescent terms, we get 
$W^+_1(A_2)\approx T_1^0\,e^{i\om p_2}$ and $W^-_1(A_2)\approx e^{-i\om p_2}+R_{1+}^0\,e^{i\om p_2}$ (note that since $\Om_1^{\eps}$ is symmetric with respect to the vertical axis, there holds $R_{1+}^0=R_{1-}^0$). Then according to identity (\ref{RelationToGetZeroRRight}), we find that for $p_2$ such that
\begin{equation}\label{RelationSecond}
\cos(2\om p_2+\theta_1)=\cos\theta_1,
\end{equation}
the curve $\eta_2\mapsto R_{2+}^{0}(\eta_2)$ passes very close to zero. For  $p_2$ as in (\ref{RelationSecond}) and $\eta_2$ such that $R_{2+}^{0}(\eta_2)\approx 0$, using (\ref{ScaLim2}), we get
\begin{equation}\label{SeveralApprox}
T_{2}^{\eps}\approx T_{1}^{\eps}-\cfrac{W^-_1(A_2)}{W^+_1(A_2)}\,R_{1+}^{\eps}\approx T_{1}^{0}-\cfrac{W^-_1(A_2)}{W^+_1(A_2)}\,\,R_{1+}^{0}.
\end{equation}
Notice that (\ref{RelationSecond}) holds if $2\om p_2\equiv0$ $[2\pi]$ or if $2\om p_2\equiv -2\theta_1$ $[2\pi]$ (we use the notation $2\om p_2\equiv 0$ $[2\pi]$ to say that $2\om p_2$ is congruent to $0$ modulo $2\pi$). Let us consider each situation separately.\\
\newline
$\star$ When $2\om p_2\equiv0$ $[2\pi]$ and $\theta_1\ne\pi$, using that $R_{1+}^{0}=-\cos\theta_1 e^{i\theta_1}$ and that $T_{1}^{0}=1+R_{1+}^{0}$, we find
\begin{equation}\label{quotient1}
\cfrac{W^-_1(A_2)}{W^+_1(A_2)}\approx 
\cfrac{e^{-i\om p_2}+R_{1+}^0\,e^{i\om p_2}}{T_1^0\,e^{i\om p_2}}=\cfrac{1+R_{1+}^0\,e^{2i\om p_2}}{e^{2i\om p_2}+R_{1+}^0\,e^{2i\om p_2}}=1.
\end{equation}
As a consequence, from (\ref{SeveralApprox}), this yields $T_{2}^{\eps}\approx T_{1}^{0}-R_{1+}^{0}=1$. This is not interesting for our purpose because we cannot control the phase.\\
\newline
$\star$ When $2\om p_2\equiv -2\theta_1$ $[2\pi]$ and $\theta_1\ne\pi$, the calculus is different. More precisely, we find
\begin{equation}\label{quotient2}
\begin{array}{rcl}
\cfrac{W^-_1(A_2)}{W^+_1(A_2)}\approx \cfrac{1+R_{1+}^0\,e^{2i\om p_2}}{e^{2i\om p_2}+R_{1+}^0\,e^{2i\om p_2}}&=&\cfrac{1-\cos\theta_1(\cos\theta_1-i\sin\theta_1)}{\cos(2\theta_1)-i\sin(2\theta_1)-\cos\theta_1(\cos\theta_1-i\sin\theta_1)}\\[12pt]
&=&\cfrac{1-\cos\theta_1(\cos\theta_1-i\sin\theta_1)}{-(1-\cos\theta_1(\cos\theta_1-i\sin\theta_1))}=-1.
\end{array}
\end{equation}
Then from (\ref{SeveralApprox}) we obtain $T_{2}^{\eps}\approx T_{1}^{0}+R_{1+}^{0}= 1-2\cos\theta_1 e^{i\theta_1}$. Writing that 
\[
1-2\cos\theta_1 e^{i\theta_1} = -\cos(2\theta_1)-i\sin(2\theta_1)=e^{i(2\theta_1-\pi)},
\]
we see that we can get $T_{2}^{\eps}\approx e^{i\mu}$ for any $\mu\in(0;2\pi)\setminus\{\pi\}$. To proceed, it suffices to take $\theta_1=(\mu+\pi)/2$ and so $p_2\equiv (\pi-\mu)/(2\om)$ $[\pi/\om]$. Thus these settings allow us to obtain the phase shifter we were looking for. We summarize the construction in the following procedure. 

\begin{procedure}\label{ProcePhaseShift1}
Assume that we want to get a phase shifter with a transmission coefficient approximately equal to $e^{i\mu}$ for some $\mu\in(0;2\pi)\setminus\{\pi\}$.\\
1) Place a thin resonator at $A_1=(0,1)$ and tune its length to get a reflection coefficient equal (up to a small error) to $-\cos\theta_1e^{i\theta_1}$ with $\theta_1:=(\mu+\pi)/2$.\\
2) Then place a second resonator at $A_2=(p_2,1)$ with $p_2>0$ large enough such that $p_2\equiv (\pi-\mu)/(2\om)$ $[\pi/\om]$. Tune its length to get almost zero reflection. According to the above analysis, this yields a transmission coefficient approximately equal to $e^{i\mu}$. 
\end{procedure}

\begin{remark}
If one wishes to obtain a phase shifter with a transmission coefficient approximately equal to $-1$, one can impose $T_{2}^{\eps}\approx e^{i\mu}$ with $\mu$ as close as desired to $\pi$. However to be rigorous, we should exclude the case $\mu=\pi$ in the calculus (\ref{quotient2}). 
\end{remark}

\subsection{Asymptotic analysis with two thin outer resonators}

In this section we compute an asymptotic expansion of the scattering coefficients in the geometry $\Om_2^{\eps}$ depicted in Figure \ref{StripWithLigaments} right with two thin outer resonators. The approach is similar to the one of \S\ref{paragraphAsympto} and we use the same notation. We simply outline the main differences. For the sake of generality, first we work with a waveguide $\Om$ which may contain an obstacle, \textit{i.e.} we work with a waveguide $\Om$ as in (\ref{MainPb}).\\
\newline
In $\Om$, we consider the expansion (\ref{AnsatzWaveguide}) while in the resonator $L_j^{\eps}$, for $j=1,2$, we work with the ansatz
\[
u^\eps(x,y)=\eps^{-1}v^{-1}_j(y)+v^0_j(y)+\dots\quad\mbox{\rm in }L^\eps_j.
\]
We take $\ell_j^{\eps}=\pi(m+1/2)/\om+\eps\ell_j'$ with $m\in\N$ and $\ell_j'\in\R$. Then we find 
\[
v^{-1}_j(y)=a_j {\bf v}_j(y)\qquad\mbox{ with }\qquad a_j\in\mathbb{C},\ {\bf v}_j(y)=\sin(\omega (y-1)).
\]  
We still denote by $R^{0}_+,\,T^0$ the first terms in the asymptotics of $R^{\eps}_+,\,T^\eps$ so that 
\[
R^{\eps}_+=R^{0}_++\dots\qquad\mbox{ and }\qquad T^\eps=T^0+\dots\,.
\]
With two resonators, the formulas (\ref{equation1}) turn into 
\begin{equation}\label{equation1Two}
\begin{array}{|l}
R^{0}_+=R_++i(a_1W^{+}(A_1)+a_2W^{+}(A_2))/2\\[4pt]
T^{0}=T+i(a_1W^{-}(A_1)+a_2W^{-}(A_2))/2.
\end{array}
\end{equation}
On the other hand, decomposition (\ref{SecondDecompo}) becomes 
\begin{equation}\label{SecondDecompoTwo}
u^0=W^++a_1\om\gamma_1+a_2\om\gamma_2
\end{equation}
where for $j=1,2$, $\gamma_j$ is the outgoing function such that
\begin{equation}\label{DefGammai}
\begin{array}{|rcll}
\Delta \gamma_j+\om^2\gamma_j&=&0&\mbox{ in }\Om\\
\partial_\nu\gamma_j&=&\delta_{A_j}&\mbox{ on }\partial\Om.
\end{array}
\end{equation}
Denote by $\Gamma_j$ the constant behaviour of $\gamma_j$ at $A_j$, that is the constant such that $\gamma_j$ behaves as 
\begin{equation}\label{DefGammaMi}
\gamma_j(x,y)= \frac{1}{\pi}\ln \frac{1}{r^{A_j}}+\Gamma_j+O(r^{A_j})\quad \mbox{ when }r^{A_j}:=((x-p_j)^2+(y-1)^2)^{1/2}\rightarrow0^+.
\end{equation}
Lemma \ref{lemmaRelConstantsTwo} in Appendix guarantees that $\gamma_1(A_2)=\gamma_2(A_1)$. We denote by $\tilde{\Gamma}$ the value of this constant. Then equality  (\ref{BoundaryCondition1}) becomes 
\[
\begin{array}{rcl}
v^0_1(1)&=&W^+(A_1)+a_1\om\,(\pi^{-1}|\ln\eps|+C_{\Xi}+\Gamma_1)+a_1\om\tilde{\Gamma}\\[3pt]
v^0_2(1)&=&W^+(A_2)+a_2\om\,(\pi^{-1}|\ln\eps|+C_{\Xi}+\Gamma_2)+a_2\om\tilde{\Gamma}.
\end{array}
\]
Writing the compatibility conditions as in (\ref{CompatibilityCondition}),  we get
\begin{equation}\label{Part2SystemTwo}
\begin{array}{rcl}
0&=&W^+(A_1)+a_1\om\,(\pi^{-1}|\ln\eps|+C_{\Xi}+\Gamma_1+\ell'_1)+a_1\om\tilde{\Gamma} =0\\[3pt]
0&=&W^+(A_2)+a_2\om\,(\pi^{-1}|\ln\eps|+C_{\Xi}+\Gamma_2+\ell'_2)+a_2\om\tilde{\Gamma} =0.
\end{array}
\end{equation}
Using that $C_{\Xi}\in\R$, $\Im m\,(\om\Gamma_j)=(|W^+(A_j)|^2+|W^-(A_j)|^2)/4$ (Lemmas \ref{LemmaCReal} and \ref{lemmaRelConstants}) and gathering (\ref{equation1Two}) and (\ref{Part2SystemTwo}), we obtain the system
\begin{equation}\label{system1Two}
\begin{array}{|l}
R^{0}_+=R_++i(a_1W^{+}(A_1)+a_2W^{+}(A_2))/2\\[4pt]
T^{0}=T+i(a_1W^{-}(A_1)+a_2W^{-}(A_2))/2\\[4pt]
a_1(\eta_1+i\,(|W^+(A_1)|^2+|W^-(A_1)|^2)/4)+a_2\om\tilde{\Gamma} =-W^+(A_1)\\[3pt]
a_2(\eta_2+i\,(|W^+(A_2)|^2+|W^-(A_2)|^2)/4)+a_1\om\tilde{\Gamma} =-W^+(A_2),
\end{array}
\end{equation}
with
\begin{equation}\label{defEtaTwo}
\begin{array}{|l}
\eta_1:=\om(\pi^{-1}|\ln\eps|+C_{\Xi}+\Re e\,\Gamma_1+\ell'_1)\\[3pt]
\eta_2:=\om(\pi^{-1}|\ln\eps|+C_{\Xi}+\Re e\,\Gamma_2+\ell'_2).
\end{array}
\end{equation}
This ends the asymptotic analysis with two resonators. We summarize the results in the following proposition. 
\begin{proposition}\label{PropoAsymTwoLig}
Assume that
\[
\begin{array}{|l}
\ell^{\eps}_1= \pi(m+1/2)/\om+\eps(\eta_1/\om-C_{\Xi}-\Re e\,\Gamma_1-\pi^{-1}|\ln\eps|)\\[2pt]
\ell^{\eps}_2= \pi(m+1/2)/\om+\eps(\eta_2/\om-C_{\Xi}-\Re e\,\Gamma_2-\pi^{-1}|\ln\eps|)
\end{array}
\]
for some $m\in\N$ and $\eta_1\in\R$, $\eta_2\in\R$. Then when $\eps$ tends to zero, we have the following expansions
\begin{equation}\label{AsymptoFinalResultsTwo}
\fbox{$\begin{array}{l}
u^{\eps}(x,y)=W^+(x,y)+a_1\om\gamma_1(x,y)+a_2\om\gamma_2(x,y)+o(1) \quad\mbox{ in }\Om,\\[6pt]
u^{\eps}(x,y)=\eps^{-1}a_j\sin(\om (y-1))+O(1) \quad\mbox{ in }L^\eps_j,\ j=1,2,\\[6pt]
R_{+}^{\eps}=R_{+}^{0}+o(1),\qquad T^{\eps}=T^0+o(1), \\[6pt]
\mbox{where $a_1$, $a_2$, $R_{+}^{0}$, $T^0$ solve the system  (\ref{system1Two}).}
\end{array}$}
\end{equation}
Here $\gamma_1$, $\gamma_2$ are the functions introduced in (\ref{DefGammaMi}). 
\end{proposition}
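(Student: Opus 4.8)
The plan is to mirror the one–resonator asymptotic analysis of \S\ref{paragraphAsympto}, carrying along two resonators simultaneously; the whole construction is really a matter of bookkeeping once the coupling term between the two junction points is identified. First I would set up the matched asymptotic expansions exactly as before: an outer expansion (\ref{AnsatzWaveguide}) in $\Om$, and inner (slow-variable) expansions $u^\eps=\eps^{-1}v_j^{-1}(y)+v_j^0(y)+\dots$ in each resonator $L_j^\eps$. Taking $\eps\to0^+$ forces $v_j^{-1}$ to solve the homogeneous problem (\ref{Pb1D}); boundedness of the outer expansion gives $v_j^{-1}(1)=0$, and under the resonance condition $\om\ell^0=\pi(m+1/2)$ we get $v_j^{-1}=a_j\mathbf{v}_j$ with $\mathbf{v}_j(y)=\sin(\om(y-1))$ and unknown amplitudes $a_j\in\Cplx$. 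The Taylor expansion at $A_j$ produces, as in (\ref{DefInnerA}), an inner field $C^{A_j}Y^1(\xi)+c^{A_j}+\dots$ with $C^{A_j}=a_j\om$, so each junction acts like a point source.

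Next I would match the outer field. The main term $u^0$ solves the Helmholtz equation in $\Om$ with Neumann data away from $A_1,A_2$ and with the prescribed logarithmic singularities $u^0\sim C^{A_j}\pi^{-1}\ln(1/r^{A_j})+U^{A_j}+O(r^{A_j})$ at each $A_j$. Applying Green's identity as in (\ref{IntegRho1}) against $W^\pm$, with the domain punctured near \emph{both} points, produces the two formulas in (\ref{equation1Two}): the contributions of the two singularities simply add. To express the constants $U^{A_j}$ in terms of the $a_j$'s, I would use the decomposition $u^0=W^++a_1\om\gamma_1+a_2\om\gamma_2$ (\ref{SecondDecompoTwo}), where $\gamma_j$ is the outgoing solution of (\ref{DefGammai}). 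Reading off the constant term at $A_1$ gives $U^{A_1}=W^+(A_1)+a_1\om\Gamma_1+a_2\om\gamma_2(A_1)$, and symmetrically at $A_2$. Here is where the only genuinely new ingredient enters: one must know that $\gamma_2(A_1)=\gamma_1(A_2)$, so that a single constant $\tilde\Gamma$ governs the cross-coupling; this is exactly the content of Lemma \ref{lemmaRelConstantsTwo} in the appendix (a reciprocity/symmetry statement for the two point-source Green functions). Matching the constant behaviour inside each resonator then yields the analogue of (\ref{BoundaryCondition1}), now with the extra $a_j\om\tilde\Gamma$ term.

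Then I would close the system via the compatibility (solvability) condition for the inner problem (\ref{Pb1D}) supplemented with the nonhomogeneous value $v_j^0(1)$ instead of $0$: testing against $\mathbf{v}_j$ and integrating by parts, using $\mathbf{v}_j(1)=\partial_y\mathbf{v}_j(1+\ell^0)=0$ and the boundary relation $\partial_y v_j^0(1+\ell^0)=(-1)^m\om^2 a_j\ell_j'$ coming from the $\eps$-correction of the resonator length, gives the two equations (\ref{Part2SystemTwo}). Invoking $C_\Xi\in\R$ (Lemma \ref{LemmaCReal}) and $\Im(\om\Gamma_j)=(|W^+(A_j)|^2+|W^-(A_j)|^2)/4$ (Lemma \ref{lemmaRelConstants}) to split real and imaginary parts, and introducing $\eta_j$ as in (\ref{defEtaTwo}), I arrive precisely at the $4\times4$ linear system (\ref{system1Two}) for $(a_1,a_2,R^0_+,T^0)$. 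The stated expansions (\ref{AsymptoFinalResultsTwo}) follow, with the justification of the remainder estimates $o(1)$ being the standard one for matched asymptotics (constructing a global approximation by cut-off gluing of the outer, inner and boundary-layer terms, and estimating the residual), exactly as alluded to after Proposition \ref{PropoAsymOneLig}.

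The main obstacle, and the part deserving real care rather than routine computation, is the coupling term: establishing $\gamma_1(A_2)=\gamma_2(A_1)=\tilde\Gamma$ and correctly threading it through the matching at both junctions. Everything else is a near-verbatim repetition of the single-resonator argument with an index $j\in\{1,2\}$ attached, and the rigorous control of the error terms is identical in nature to the one-resonator case. Worth noting too is that the cross term only appears in the equations for the $a_j$'s and not in the first two lines of (\ref{system1Two}); this is consistent with (\ref{equation1Two}) being derived purely from the far-field/Green's-identity computation, where the two singularities contribute independently.
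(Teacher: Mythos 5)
Your proposal is correct and follows essentially the same route as the paper: repeat the one‑resonator matched asymptotics with an index $j\in\{1,2\}$, derive (\ref{equation1Two}) from Green's identity with both singularities, use the decomposition (\ref{SecondDecompoTwo}) together with the reciprocity $\gamma_1(A_2)=\gamma_2(A_1)=\tilde\Gamma$ (Lemma \ref{lemmaRelConstantsTwo}) to close the matching at each junction, and assemble the system (\ref{system1Two}). A minor point in your favour: your reading of the constant term at $A_1$, namely $W^+(A_1)+a_1\om\Gamma_1+a_2\om\tilde\Gamma$ with the cross term carrying $a_2$, is the one consistent with the final system (\ref{system1Two}), whereas the paper's intermediate displays before (\ref{system1Two}) have the indices on the coupling term transposed.
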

\noindent Now, let us consider in more detail the particular case where $\Om=S=\R\times(0;1)$, \textit{i.e.} the case without obstacle. Then there holds $W^{\pm}=\mrm{w}^{\pm}$ and so $R_+=0$, $T=1$. Without loss of generality, impose $p_1=0$, so that $A_1=(0,1)$. In this situation, the system (\ref{system1Two}) simply writes
\begin{equation}\label{system1TwoSimple}
\begin{array}{|l}
R^{0}_+=i(a_1+a_2e^{i\om p_2})/2\\[4pt]
T^{0}=1+i(a_1+a_2e^{-i\om p_2})/2\\[4pt]
a_1(\eta_1+i/2)+a_2\om\tilde{\Gamma} =-1\\[3pt]
a_2(\eta_2+i/2)+a_1\om\tilde{\Gamma} =-e^{i\om p_2}.
\end{array}
\end{equation}
Pick $p_2>0$ large enough to neglect the evanescent part in $\gamma_1(A_2)=\tilde{\Gamma}$. From (\ref{relationCoefSca}), we obtain $\om\tilde{\Gamma}\approx ie^{i\om(p_2-p_1)}/2$. The last two lines of (\ref{system1TwoSimple}) give 
\begin{equation}\label{Expression apm}
a_1\approx\cfrac{ie^{2i\om p_2}/2-(\eta_2+i/2)}{e^{2i\om p_2}/4+(\eta_1+i/2)(\eta_2+i/2)}\,,\qquad a_2\approx\cfrac{-e^{i\om p_2}\eta_1}{e^{2i\om p_2}/4+(\eta_1+i/2)(\eta_2+i/2)}\,.
\end{equation}
In order to obtain $R^0_+\approx 0$ (remember that we are interested in constructing a phase shifter), from the first line of (\ref{system1TwoSimple}), we see that we must have $a_2/a_1=-e^{-i\om p_2}$ (we discard the case $a_2=a_1=0$ which directly gives $T^0=1$ which is not what we wish because we want to be able to control the phase). Relations (\ref{Expression apm}) imply
\[
\cfrac{a_2}{a_1}\approx \cfrac{-e^{i\om p_2}\eta_1}{ie^{2i\om p_2}/2-(\eta_2+i/2)}=-e^{-i\om p_2}\cfrac{\eta_1}{i/2-e^{-2i\om p_2}(\eta_2+i/2)}\,.
\]
As a consequence, we must set $\om p_2$ and $\eta_2$ such that $\eta_1=i/2-e^{-2i\om p_2}(\eta_2+i/2)$. This can be rewritten as follows:
\begin{equation}\label{SystemZeroRef}
e^{i\om p_2}\eta_1+e^{-i\om p_2}\eta_2+\sin(\om p_2)=0\quad\Leftrightarrow \quad\begin{array}{|l}
\cos(\om p_2)(\eta_1+\eta_2)+\sin(\om p_2)=0\\[2pt]
(\eta_1-\eta_2)\sin(\om p_2)=0.
\end{array}
\end{equation}
At this stage, we need to distinguish according to the case.\\
\newline 
$\star$ When $\om p_2\equiv 0\ [\pi]$, to get almost zero reflection, we must set the lengths of the resonators so that $\eta_1=-\eta_2$. In this situation, there holds $a_2=-a_1$ and so $T^0\approx1$ according to (\ref{AsymptoFinalResultsTwo}). Again, this is not interesting for our objective. \\ 
\newline
$\star$ When $\om p_2\not\equiv 0\ [\pi]$, to get almost zero reflection, according to (\ref{SystemZeroRef}), we see that we must set the lengths of the resonators so that $\eta_1=\eta_2$. Then inserting the relation $\eta_1-i/2=-e^{-2i\om p_2}(\eta_2+i/2)\Leftrightarrow\eta_2+i/2=-e^{2i\om p_2}(\eta_1-i/2)$ in (\ref{Expression apm}) leads to
\[
a_2\approx\cfrac{-e^{i\om p_2}\eta_1}{e^{2i\om p_2}/4-e^{2i\om p_2}(\eta_1^2+1/4)}=\cfrac{e^{-i\om p_2}}{\eta_1}\,.
\]
This yields 
\[
\begin{array}{rclcl}
T^0&=&1+\frac{i}{2}a_2( e^{-i\om p_2}-e^{i\om p_2})\\
&=&1+a_2\sin(\om p_2)=1-2\cos(\om p_2) e^{-i\om p_2}&=&1-2\cos^2(\om p_2)+i\sin(\om p_2) \\
&& &=&-\cos(2\om p_2)+i\sin(2\om p_2)\ =\ e^{i(\pi-2\om p_2)}\,.
\end{array}
\]
Thus we see that we can get $T^{\eps}\approx e^{i\mu}$ for any $\mu\in(0;2\pi)\setminus\{\pi\}$. To proceed, it suffices to take $p_2$ such that $\pi-2\om p_2\equiv\mu$ $[2\pi]$, \textit{i.e.} $p_2\equiv (\pi-\mu)/(2\om)$ $[\pi/\om]$. We are not surprise to find back the result of \S\ref{paragraphSuccessive} with the same distance between the resonators (see Procedure \ref{ProcePhaseShift1}). However there is one additional information that we get with this approach. Indeed, when there is no obstacle in $\Om$, we observe that the functions $\gamma_1$, $\gamma_2$ in (\ref{DefGammai}) are such that $\gamma_2(x,y)=\gamma_1(p_1+(x-p_2),y)$. As a consequence, we have $\Gamma_1=\Gamma_2$ in (\ref{DefGammaMi}). We deduce from (\ref{defEtaTwo}) that the resonators must have the same lengths to get almost zero reflection. This allows us to obtain the following procedure to construct the phase shifter. 
\begin{procedure}\label{ProcePhaseShift2}
Assume that we want to get a phase shifter with a transmission coefficient approximately equal to $e^{i\mu}$ for some $\mu\in(0;2\pi)\setminus\{\pi\}$.\\
1) Place a thin resonator at $A_1=(0,1)$ and a second one at $A_2=(p_2,1)$ with $p_2$ large enough such that $p_2\equiv (\pi-\mu)/(2\om)$ $[\pi/\om]$.\\
2) Then impose the same length to the two resonators and vary it to get almost zero reflection. According to the above analysis, this yields a transmission coefficient approximately equal to $e^{i\mu}$. 
\end{procedure}
\noindent In Figure \ref{FigPhase}, we use Procedure \ref{ProcePhaseShift2} to construct a geometry where we have zero reflection and a phase shift approximately equal to $\pi/4$. The method works correctly. In Figure \ref{FigPhaseCompa}, we compare the curves $(\ell_1,\ell_2)\mapsto|R^{\eps}_+(\ell_1,\ell_2)|$, $(\ell_1,\ell_2)\mapsto|T^{\eps}(\ell_1,\ell_2)|$ with $(\eta_1,\eta_2)\mapsto|R^{0}_+(\eta_1,\eta_2)|$, $(\eta_1,\eta_2)\mapsto|T^{0}(\eta_1,\eta_2)|$. Here 
$R_{+}^{0}$, $T^0$ solve the system  (\ref{system1Two}) and correspond to the main terms in the asymptotic of $R_{+}^{\eps}$, $T^\eps$. In accordance with Proposition \ref{PropoAsymTwoLig}, we observe a very good agreement between  the behaviours.

\begin{figure}[!ht]
\centering
\includegraphics[width=0.8\textwidth]{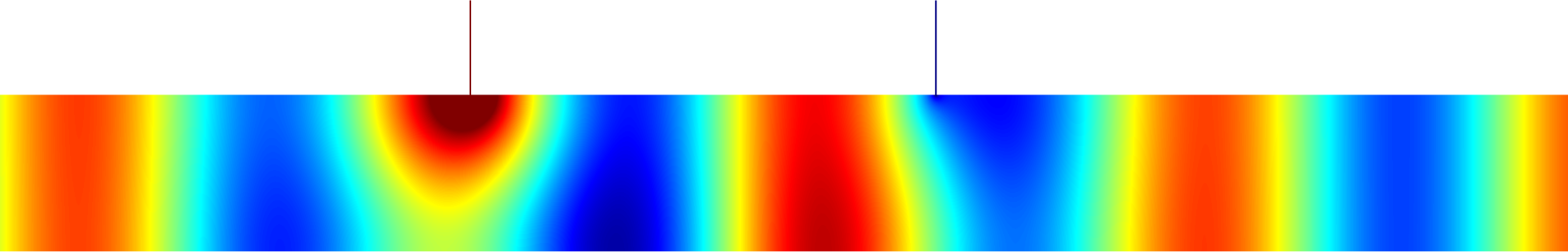}\\[5pt]
\includegraphics[width=0.8\textwidth]{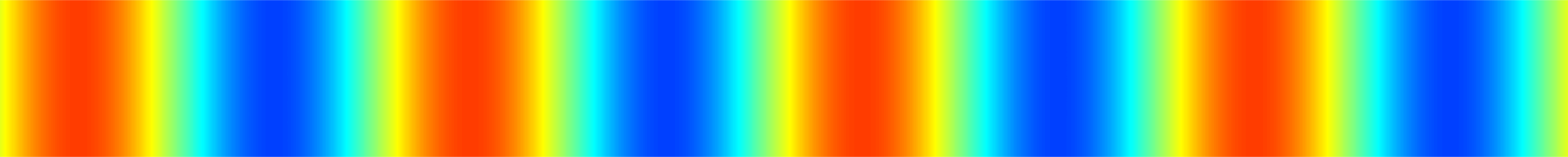}\\[5pt]
\includegraphics[width=0.8\textwidth]{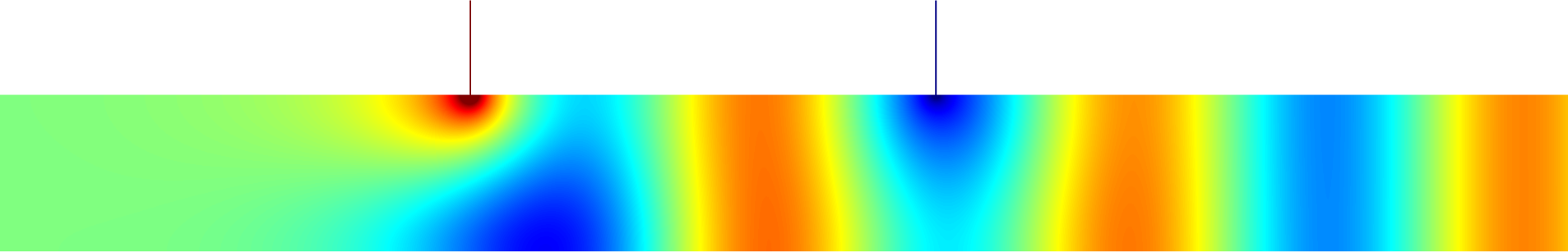}
\caption{Real parts of $u^\eps$ (top), $\mrm{w}^+$ (middle) and $u^\eps-\mrm{w}^+$ (bottom) ($\eps=0.01$). The resonators are tuned to get almost zero reflection and a phase shift approximately equal to $\pi/4$ (Procedure \ref{ProcePhaseShift2}).  \label{FigPhase}}
\end{figure}

\begin{figure}[!ht]
\centering
\includegraphics[trim={0.5cm 0.5cm 0cm 0cm},clip,width=0.32\textwidth]{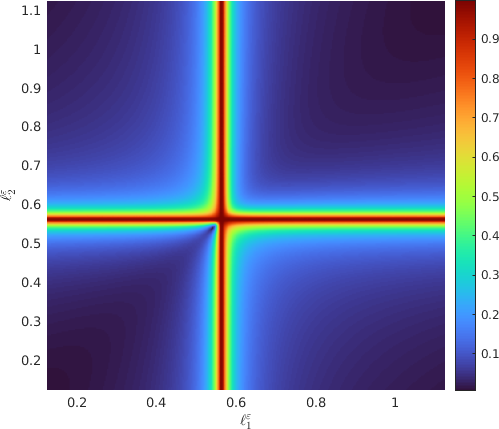}\qquad \includegraphics[trim={0.5cm 0.5cm 0cm 0cm},clip,width=0.32\textwidth]{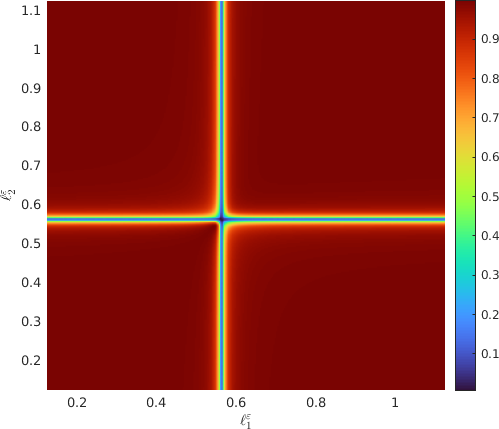}\\[5pt]
\includegraphics[trim={0.5cm 0.5cm 0cm 0cm},clip,width=0.32\textwidth]{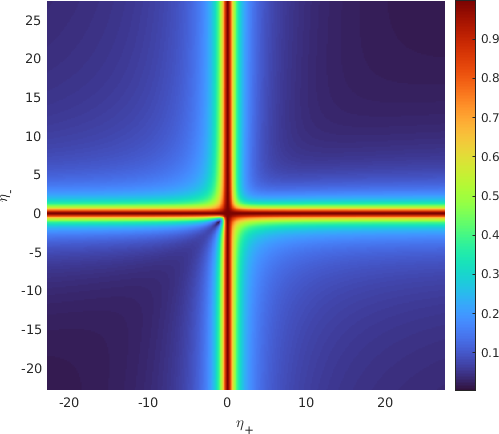}\qquad \includegraphics[trim={0.5cm 0.5cm 0cm 0cm},clip,width=0.32\textwidth]{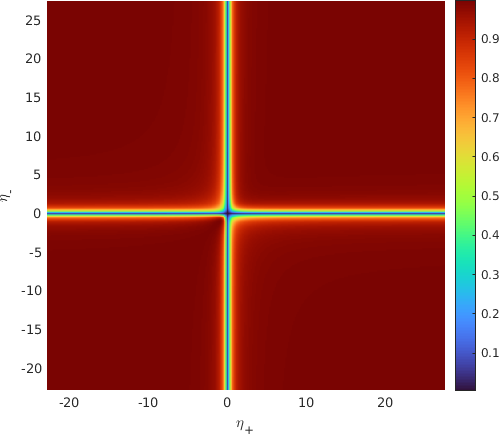}\\[-5pt]
\caption{First line: $(\ell_1,\ell_2)\mapsto|R^{\eps}_+(\ell_1,\ell_2)|$ (left) and  $(\ell_1,\ell_2)\mapsto|T^{\eps}(\ell_1,\ell_2)|$ (right).
Second line: $(\eta_1,\eta_2)\mapsto|R^{0}_+(\eta_1,\eta_2)|$ (left) and  $(\eta_1,\eta_2)\mapsto|T^{0}(\eta_1,\eta_2)|$ (right). We work in the geometry of Figure \ref{FigPhase} with $\eps=0.01$.
\label{FigPhaseCompa}}
\end{figure}

\newpage

\section{Cloaking}\label{SectionCloaking}

\subsection{Cloaking with three resonators}

Gathering the results of Sections \ref{SectionZeroR} and \ref{SectionPhaseShift}, now we can propose a method to (approximately) cloak any obstacle.

\begin{procedure}\label{ProceCloakingThree}
Let $\Om$ be a given waveguide as described before (\ref{MainPb}). \\
1) Following Procedure \ref{MainProce}, add a thin resonator to $\Om$ and tune its length to get almost zero reflection.\\
2) Measure the phase of the transmission coefficient in the geometry obtained after step 1).\\ 
3) Following Procedure \ref{ProcePhaseShift1} or Procedure \ref{ProcePhaseShift2}, place additionally two other resonators and tune their lengths to compensate the phase shift. This yields a geometry where the transmission coefficient is approximately equal to one.
\end{procedure}

\noindent In Figure \ref{FigCloakDino}, we use Procedure \ref{ProceCloakingThree} to approximately cloak an obstacle. The scattered field $u^\eps-\mrm{w}^+$ is indeed approximately exponentially decaying at infinity. We observe that one of the three resonators has a very weak influence. This is a particular circumstance for this geometry which is related to the analysis presented in the next paragraph.

\begin{figure}[!ht]
\centering
\includegraphics[width=0.92\textwidth]{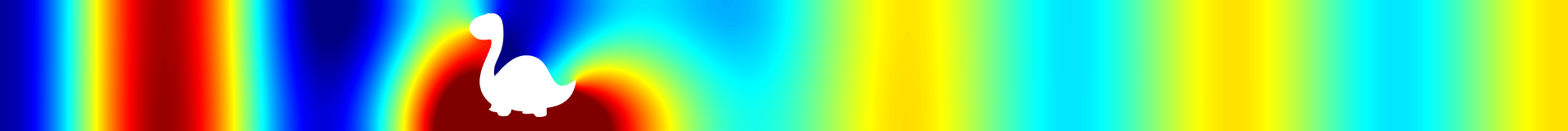}\\[5pt]
\includegraphics[width=0.92\textwidth]{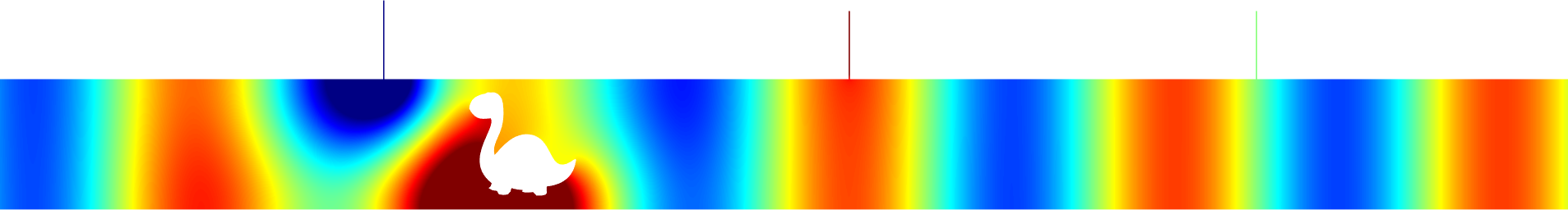}\\[5pt]
\includegraphics[width=0.92\textwidth]{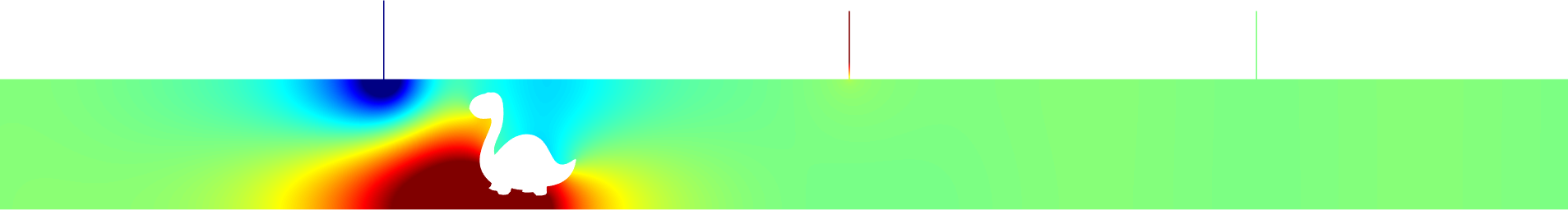}
\caption{Real parts of $W^+$ (top), $u^\eps$ (middle) and $u^\eps-\mrm{w}^+$ (bottom). The resonators are tuned to get approximately $T^{\eps}$ equal to one (Procedure \ref{ProceCloakingThree}). Here $\eps=0.01$.\label{FigCloakDino}}
\end{figure}

\subsection{Cloaking with less than three resonators}

In the previous section, we explained how to cloak any object with three resonators. Now we study if we can diminish the number of resonators needed in the process. In Section \ref{SectionZeroR}, we described how to cancel approximately the reflection produced by the obstacle by working with one resonator. In the following proposition, we show that in certain circumstances, the corresponding transmission coefficient can be approximately equal to one. 
\begin{proposition}\label{PropositionCloakingOneLig}
Assume that $\Om$ is such that $T\in\mathscr{C}(1/2,1/2)\setminus\{0,1\}$ ($T$ is defined in (\ref{Field1})). Then there are some positions of $A=(p,1)$ such that there holds $T^{0}(\eta)=1$ for some $\eta\in\R$. Here $T^{0}$ is the main term in the asymptotics of $T^{\eps}$, see (\ref{AsymptoFinalResults1}). For such $A$, Proposition \ref{PropoAsymOneLig} ensures that one can make $T^{\eps}$ as close as one wishes to $1$ by taking $\eps$ small enough and by tuning correctly the length of the resonator $L^{\eps}$.
\end{proposition}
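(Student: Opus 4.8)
The plan is to reduce ``$T^0(\eta)=1$ for some $\eta\in\R$'' to a condition on the position $A$ alone, and then to realise that condition by placing the resonator far from the obstacle, in the spirit of the proof of Proposition~\ref{PropositionZeroRef}. Set $w:=W^+(A)W^-(A)$ and $\alpha:=|W^+(A)|^2+|W^-(A)|^2$. From the formula for $T^0$ in~(\ref{SolutionSystem}), the equation $T^0(\eta)=1$ reads $4\eta+i\alpha=2iw/(T-1)$, which (as $T\neq1$) admits a solution $\eta\in\R$ if and only if $w\neq0$ and the imaginary parts coincide, i.e.
\[
|W^+(A)|^2+|W^-(A)|^2=2\,\Re e\,\big(W^+(A)W^-(A)/(T-1)\big),
\]
the exact analogue of~(\ref{WithoutApprox}) and~(\ref{identityZeroTransmission}); call it $(\star)$. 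So it suffices to exhibit $A=(p,1)\in\partial\Om$ with $W^+(A)W^-(A)\neq0$ for which $(\star)$ holds.

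Next I would simplify $(\star)$ using the structure of the scattering matrix. Recall from~\S\ref{ParagZeroT} that for \emph{every} $A$ one has $2\,\Re e\,(W^+(A)W^-(A)/T)=|W^+(A)|^2+|W^-(A)|^2$, so the circle $\eta\mapsto T^0(\eta)$ always passes through $0$ and through $T$ (the point $\eta=\infty$). Now $T\in\mathscr{C}(1/2,1/2)$ is equivalent to $\Re e\,T=|T|^2$, whence $|T-1|^2=1-|T|^2$, $(\Im m\,T)^2=|T|^2(1-|T|^2)$ and $T/(T-1)=-i\,\Im m\,T/|T-1|^2\in i\R$. Injecting these relations into $(\star)$ and comparing with the identity above, one obtains after a short computation that $(\star)$ together with $w\neq0$ is equivalent to
\[
|W^+(A)|=|W^-(A)|\neq0\qquad\text{and}\qquad \mrm{sign}\,\big(\Im m\,(W^+(A)W^-(A)/T)\big)=\mrm{sign}\,(\Im m\,T).
\]
Geometrically: precisely when $T\in\mathscr{C}(1/2,1/2)\setminus\{0,1\}$ the circumcircle of $\{0,T,1\}$ has radius $1/2$, whereas the circle $\eta\mapsto T^0(\eta)$ through $0$ and $T$ has radius $|W^+(A)W^-(A)|/(|W^+(A)|^2+|W^-(A)|^2)\leq 1/2$, with equality iff $|W^+(A)|=|W^-(A)|$; the sign condition then selects the right one of the two radius-$1/2$ circles through $0$ and $T$.

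To produce such an $A$, I would put the resonator far on the left, $p\to-\infty$, and use~(\ref{ErrorEstim}): $W^+(A)=e^{i\om p}+R_+e^{-i\om p}+O(e^{-\sqrt{4\pi^2-\om^2}|p|})$ and $W^-(A)=T\,e^{-i\om p}+O(e^{-\sqrt{4\pi^2-\om^2}|p|})$. Writing $R_+=\rho\,e^{i\theta_+}$ with $\rho=|R_+|=\sqrt{1-|T|^2}\in(0,1)$ (since $T\notin\{0,1\}$ on the circle), one finds
\[
|W^+(A)|^2-|W^-(A)|^2=2\rho\big(\rho+\cos(\theta_+-2\om p)\big)+O(e^{-\sqrt{4\pi^2-\om^2}|p|}),
\]
whose leading term vanishes \emph{transversally} along a periodic family $p_n\to-\infty$ (because $\rho\in(0,1)$). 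By the intermediate value theorem and the exponential smallness of the remainder, $p\mapsto|W^+(A)|^2-|W^-(A)|^2$ itself vanishes near each $p_n$ for $n$ large; along this family $|W^\pm(A)|^2=|T|^2+o(1)>0$, so $W^+(A)W^-(A)\neq0$, and $\Im m\,(W^+(A)W^-(A)/T)=\rho\sin(\theta_+-2\om p)+o(1)=\pm\rho\sqrt{1-\rho^2}+o(1)$, the two signs occurring on the two branches of $\cos(\theta_+-2\om p)=-\rho$. Since $|\Im m\,T|=\rho\sqrt{1-\rho^2}\neq0$, keeping the branch whose sign matches $\mrm{sign}\,(\Im m\,T)$ yields, for $|p|$ large, a position with $|W^+(A)|=|W^-(A)|\neq0$ and the correct sign; by the second step this gives $(\star)$, hence $T^0(\eta)=1$ for the corresponding $\eta$, and Proposition~\ref{PropoAsymOneLig} concludes.

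The hard part is the interplay of the second and third steps. Unlike for zero reflection, the functional directly encoding $T^0(\eta)=1$ has, under the boundary hypothesis $T\in\mathscr{C}(1/2,1/2)$, only \emph{double} zeros in the far field (a tangency phenomenon), so an exponentially small perturbation could a priori destroy them and the plain intermediate value argument of Proposition~\ref{PropositionZeroRef} fails. The remedy is to realise $(\star)$ indirectly, through the quantity $|W^+(A)|^2-|W^-(A)|^2$, which \emph{does} cross zero transversally; the passage between the two is exactly the algebraic identity furnished by the structure relations of $\mathbb{S}$ together with $\Re e\,T=|T|^2$, and one must track the sign carefully so that the correct radius-$1/2$ circle is picked out.
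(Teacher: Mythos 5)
Your proof is correct, and it takes a genuinely different route from the paper's. The paper first reuses Proposition \ref{PropositionZeroRef} to select $A$ and $\eta_{\star}$ with $R^{0}_+(\eta_{\star})=0$ exactly, so that $|T^{0}(\eta_{\star})|=1$; it then notes that the circle $\{T^{0}(\eta)\}$ passes through $0$ and $T$ and can meet the unit circle at only one of two possible tangency points, one of which is $1$, and a far-field evaluation $T^{0}(\eta_{\star})\approx T/(1+R_+e^{-2i\om p})\approx 1$ (obtained by additionally prescribing the sign of $\sin(2\om p-\theta_+)$ to match the sign $\tau$ in (\ref{DefParamTau})) forces $T^{0}(\eta_{\star})=1$ exactly by this two-point rigidity. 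You instead write the exact solvability condition $(\star)$ for $T^{0}(\eta)=1$, combine it with the identity (\ref{identityZeroTransmission}), which holds for every $A$, and with $\Re e\,T=|T|^2$, to reduce everything to the intrinsic criterion $|W^+(A)|=|W^-(A)|\neq0$ together with $\mrm{sign}\big(\Im m\,(W^+(A)W^-(A)/T)\big)=\mrm{sign}\,(\Im m\,T)$ (I checked the algebra: since $x:=\Re e\,(w/T)=\alpha/2$ and $T/(T-1)\in i\R$, condition $(\star)$ becomes $x(1-|T|^2)=\Im m\,(w/T)\,\Im m\,T$, and with $(\Im m\,T)^2=|T|^2(1-|T|^2)$ this is exactly equivalent to your two conditions); you then realize this criterion by a transversal intermediate-value argument on $p\mapsto|W^+(A)|^2-|W^-(A)|^2$ in the far field, whose leading zero set coincides, as it must, with (\ref{RelationToGetZeroRLeft}). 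Both arguments end up on the same branch of $\cos(2\om p-\theta_+)=-\rho$. What your route buys is an exact, position-only criterion that can be tested numerically for any $A$, not only far from the obstacle, plus an explicit diagnosis of why a naive imitation of the proof of Proposition \ref{PropositionZeroRef} fails here: the direct functional $\alpha-2\Re e\,(w/(T-1))$ has a nonnegative leading far-field term with only double zeros under the hypothesis $T\in\mathscr{C}(1/2,1/2)$, so no sign change is available. What the paper's route buys is brevity, at the price of a rather terse ``approximately $1$ on a two-element set, hence exactly $1$'' rigidity step.
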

\begin{proof}
According to Proposition \ref{PropositionZeroRef}, we can find $A$ and $\eta_{\star}$ such that $R^{0}_+(\eta_{\star})=0$ and $W^+(A)W^-(A)\ne0$. In this case relation (\ref{RelConsNRJ}) guarantees that $|T^{0}(\eta_{\star})|=1$. We also know that the set $\{T^{0}(\eta)\,|\,\eta\in\overline{\R}:=\R\cup{\pm\infty}\}$ coincides with the circle passing through $0$ (see the analysis of \S\ref{ParagZeroT}), $T$ and $T^{0}(\eta_{\star})$. Observe that they are exactly two circles passing through $0$, $T$ and hitting only once the unit circle (we impose this condition due to conservation of energy (\ref{RelConsNRJ})), one of them being $\mathscr{C}(1/2,1/2)$. Our goal is to show that we can choose $A$ such that $\{T^{0}(\eta)\,|\,\eta\in\overline{\R}\}=\mathscr{C}(1/2,1/2)$.\\
\newline
According to (\ref{SolutionSystem}), since $R^{0}_+(\eta_{\star})=0$, we have 
\[
T^{0}(\eta_{\star})=T-\cfrac{W^{-}(A)}{W^{+}(A)}\,R_+.
\]
Set again $R_+=\rho e^{i\theta_+}$ with $\rho\in(0;1)$ and $\theta_+\in [0;2\pi)$. Using that $T\in\mathscr{C}(1/2,1/2)\setminus\{0,1\}$ with $|T|=\sqrt{1-\rho^2}$ (due to conservation of energy), a direct calculus gives 
\begin{equation}\label{DefParamTau}
T=1-\rho^2+ i\tau\,\rho\sqrt{1-\rho^2},
\end{equation}
with $\tau=1$ or $\tau=-1$. On the other hand, if the resonator is quite far on the left of the obstacle, we have $W^+(A)\approx e^{i\om p}+R_+\,e^{-i\om p}$ as well as $W^-(A)\approx T\,e^{-i\om p}$, which implies 
\begin{equation}\label{EqAudessus}
T^{0}(\eta_{\star})\approx \cfrac{e^{2i\om p}\,T}{e^{2i\om p}+R_+}=\cfrac{T}{1+R_+\,e^{-2i\om p}}\,.
\end{equation}
According to Proposition \ref{PropositionZeroRef} (see in particular (\ref{RelationToGetZeroRLeft})), $A$ can be chosen such that $\cos(2\om p-\theta_+)\approx-\rho$. Then we have $\sin(2\om p-\theta_+)\approx \pm\sqrt{1-\rho^2}$. By choosing $A$ such that additionally $\sin(2\om p-\theta_+)\approx \tau\,\sqrt{1-\rho^2}$ where $\tau$ is defined above, which is doable, from (\ref{EqAudessus}) we get $T^{0}(\eta_{\star})\approx 1$. This is enough to guarantee that when $p$ is sufficiently small, with the above conditions, we have $\{T^{0}(\eta)\,|\,\eta\in\overline{\R}\}=\mathscr{C}(1/2,1/2)$ and so $T^{0}(\eta_{\star})= 1$. 
\end{proof}
\noindent This proposition allows us to derive the following procedure.
\begin{procedure}\label{ProceCloakingOne}
Let $\Om$ be a given waveguide as described before (\ref{MainPb}). Assume that $R_\pm=\rho e^{i\theta_\pm}$ with $\rho\in(0;1)$, $\theta_\pm\in [0;2\pi)$ and that $T\in\mathscr{C}(1/2,1/2)\setminus\{0,1\}$.\\
1) Measure $\rho$, $\theta_\pm$ and $T$.\\ 
2) Place one resonator quite far on the left (resp. on the right) of the obstacle at the position $A=(p,1)$ with $p$ such that $\cos(2\om p-\theta_+)=-\rho$ and 
$\sin(2\om p-\theta_+)= \tau\sqrt{1-\rho^2}$ (resp. $\cos(2\om p+\theta_-)=-\rho$ and 
$\sin(2\om p+\theta_-)= \tau\sqrt{1-\rho^2}$) where $\tau$ appears is (\ref{DefParamTau}). Tune its length to get zero reflection. Then the transmission coefficient will be approximately equal to one.
\end{procedure}

\noindent For a general waveguide $\Om$ however, we do not have $T\in\mathscr{C}(1/2,1/2)\setminus\{0,1\}$. Can we perturb the geometry by adding one resonator to get this property?

\begin{proposition}\label{PropositionCloakingOneLig}
Assume that $\Om$ is such that $T\notin\mathscr{C}(1/2,1/2)\setminus\{0,1\}$ ($T$ is defined in (\ref{Field1})). Then there are some positions of $A=(p,1)$ such that the circle $\{T^{0}(\eta)\,|\,\eta\in\overline{\R}\}$ has a non empty intersection with $\mathscr{C}(1/2,1/2)\setminus\{0,1\}$.  Here $T^{0}$ is the main term in the asymptotics of $T^{\eps}$, see (\ref{AsymptoFinalResults1}). For such $A$, Proposition \ref{PropoAsymOneLig} ensures that we can have $T^{\eps}$ on $\mathscr{C}(1/2,1/2)\setminus\{0,1\}$ by taking $\eps$ small enough and by tuning correctly the length of the resonator $L^{\eps}$.
\end{proposition}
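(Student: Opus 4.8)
The plan is to leverage Proposition \ref{PropositionCloakingOneLig}'s predecessor (Proposition \ref{PropositionZeroRef} / Proposition \ref{propositionZeroT}) together with the description of the set $\{T^{0}(\eta)\,|\,\eta\in\overline{\R}\}$ obtained in \S\ref{ParagZeroT}. Recall from that analysis that for a resonator placed at $A=(p,1)$ with $W^+(A)W^-(A)\ne 0$, the curve $\eta\mapsto T^{0}(\eta)$ traces a full circle passing through $0$ (the zero-transmission point, which is always attained by Proposition \ref{propositionZeroT}) and through $T$ (the value at $\eta=\pm\infty$). So the circle $\mathscr{T}(A):=\{T^{0}(\eta)\,|\,\eta\in\overline{\R}\}$ is completely determined by one extra point on it, and by conservation of energy this circle meets the unit circle exactly once (tangentially, at $T^{0}(\eta_\star)$ where $R^{0}_+(\eta_\star)=0$). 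The target circle $\mathscr{C}(1/2,1/2)$ also passes through $0$ and is internally tangent to the unit circle at $1$. Thus the question reduces to: can we choose $p$ so that $\mathscr{T}(A)$ and $\mathscr{C}(1/2,1/2)$ intersect?

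First I would reduce to the far-field regime: assume the resonator sits far on the left of the obstacle, so that by the estimates (\ref{ErrorEstim}) we have $W^+(A)\approx e^{i\om p}+R_+e^{-i\om p}$ and $W^-(A)\approx T e^{-i\om p}$, with an exponentially small error in $|p|$. Since $T\ne 0$, for $p$ small enough $W^+(A)W^-(A)\ne 0$ (as in the proof of Proposition \ref{PropositionZeroRef}), so the circle $\mathscr{T}(A)$ is well defined. Next I would track the third point on $\mathscr{T}(A)$ as a function of $p$. A natural choice is $T^{0}(\eta_\star)$, the tangency point on the unit circle; from (\ref{SolutionSystem}) with $R^{0}_+(\eta_\star)=0$ one gets $T^{0}(\eta_\star)=T-(W^-(A)/W^+(A))R_+$, and using the far-field approximation this becomes, as in (\ref{EqAudessus}),
\[
T^{0}(\eta_\star)\approx \cfrac{T}{1+R_+e^{-2i\om p}}.
\]
As $p$ varies over $\R$, $e^{-2i\om p}$ runs over the unit circle, so $T^{0}(\eta_\star)$ runs over a circle (Möbius image of the unit circle) of unit-circle points. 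Since $0$ and $T$ are fixed on $\mathscr{T}(A)$ while its unit-circle tangency point sweeps a nontrivial arc, the family of circles $\{\mathscr{T}(A)\}_{p}$ is a genuine pencil through $0$ and $T$; among its members there are some whose tangency point is as close as we like to $1$, and hence whose circle $\mathscr{T}(A)$ passes as close as we like to the arc of $\mathscr{C}(1/2,1/2)$ near $1$. By continuity of circles with respect to three determining points, for an appropriate $p$ the circle $\mathscr{T}(A)$ crosses $\mathscr{C}(1/2,1/2)$ transversally, giving a nonempty intersection with $\mathscr{C}(1/2,1/2)\setminus\{0,1\}$ (and, if necessary, one argues away the degenerate possibility that the only intersection is at $0$ or $1$, which is excluded precisely because $T\notin\mathscr{C}(1/2,1/2)\setminus\{0,1\}$ forces the two circles to be distinct and the sweep to pass through genuinely interior positions). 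Finally, invoking Proposition \ref{PropoAsymOneLig}, for $\eps$ small and the length of $L^{\eps}$ tuned to the corresponding $\eta$ we get $T^{\eps}$ arbitrarily close to a point of $\mathscr{C}(1/2,1/2)\setminus\{0,1\}$.

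The main obstacle I expect is making the geometric sweep argument rigorous: one must show that as $p\to-\infty$ (or over a suitable range) the three points $0$, $T$, $T^{0}(\eta_\star)(p)$ are in "general position" often enough that the resulting circles $\mathscr{T}(A)$ actually cross $\mathscr{C}(1/2,1/2)$, rather than merely accumulating on it without crossing. Concretely this means exhibiting two values of $p$ for which $\mathscr{T}(A)$ lies on opposite sides of (part of) $\mathscr{C}(1/2,1/2)$, or equivalently showing a sign change in a suitable "signed distance" functional analogous to $\mathscr{F}^{\mrm{asy}}$ in the proof of Proposition \ref{PropositionZeroRef}, and then controlling the exponentially small perturbation coming from (\ref{ErrorEstim}) via the intermediate value theorem. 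The rest — existence of the zero-transmission point, the Möbius-circle bookkeeping, and the transfer from $T^{0}$ to $T^{\eps}$ — is routine given the tools already developed in \S\ref{SectionZeroR}.
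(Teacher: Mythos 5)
There is a genuine gap, and it sits exactly at the step you flag as ``the main obstacle'': your mechanism for producing the intersection relies on a false premise. You assert that for each position $p$ the circle $\mathscr{T}(A)=\{T^{0}(\eta)\,|\,\eta\in\overline{\R}\}$ touches the unit circle at a point $T^{0}(\eta_\star)$ with $R^{0}_+(\eta_\star)=0$, and that this tangency point sweeps a nontrivial arc (indeed ``a circle of unit-circle points'') as $p$ varies. Neither claim holds. The equation $R^{0}_+(\eta)=0$ has a \emph{real} solution $\eta_\star$ only when $p$ satisfies the compatibility condition (\ref{WithoutApprox}); for generic $p$ the circle $\eta\mapsto R^{0}_+(\eta)$ does not pass through $0$, so $\mathscr{T}(A)$ lies strictly inside the open unit disk and has no tangency point at all. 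Moreover, restricted to the admissible positions (an almost periodic discrete set, by the proof of Proposition \ref{PropositionZeroRef}), the far-field value $T/(1+R_+e^{-2i\om p})$ is pinned by $\cos(2\om p-\theta_+)=-\rho$ and therefore takes essentially only \emph{two} values, $T/(1-\rho^2\mp i\rho\sqrt{1-\rho^2})$, up to exponentially small corrections --- it does not run over an arc. So the ``pencil of circles with a sweeping tangency point'' picture collapses, and the transversal-crossing argument you defer to a sign-change functional has no foundation as set up.

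The paper's route avoids all of this and needs no condition on $p$ beyond genericity. Since $\mathscr{T}(A)$ is a circle through $0$ contained in the closed unit disk, it meets $\mathscr{C}(1/2,1/2)\setminus\{0,1\}$ as soon as its center $\mathscr{O}_{\mathcal{C}}$ has nonzero imaginary part: the two circles then have distinct tangent lines at $0$ (the tangent of $\mathscr{C}(1/2,1/2)$ at $0$ is the imaginary axis), hence a second intersection point, and that point cannot be $1$ because a circle inside the unit disk passing through $1$ is internally tangent to the unit circle there and must have a real center. From (\ref{SolutionSystem}) and (\ref{Mobius}) one computes
\[
\mathscr{O}_{\mathcal{C}}=T-\cfrac{W^{+}(A)W^{-}(A)}{|W^+(A)|^2+|W^-(A)|^2}\approx\cfrac{T}{2}\left(1-\cfrac{i\,\Im m\,(R_+e^{-2i\om p})}{1+\Re e\,(R_+e^{-2i\om p})}\right)
\]
in the far-field regime, whose imaginary part vanishes for at most a discrete set of $p$. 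If you want to salvage your approach, you would have to restrict to the admissible discrete set of $p$, distinguish the two branches of tangency points, and still argue that one of the two resulting circles crosses $\mathscr{C}(1/2,1/2)$ away from $0$ and $1$; the center computation is both shorter and stronger.
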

\begin{proof}
The analysis of \S\ref{ParagZeroT} ensures that $\mathcal{C}:=\{T^{0}(\eta)\,|\,\eta\in\overline{\R}\}$ is a circle passing through zero and $T$. Moreover, due to conservation of energy (see relation (\ref{RelConsNRJ})), $\mathcal{C}$ must be inside the unit disk. Therefore to establish that $\mathcal{C}\cap \{\mathscr{C}(1/2,1/2)\setminus\{0,1\}\}\ne\emptyset$, it is sufficient to show that the center $\mathscr{O}_\mathcal{C}$ of $\mathcal{C}$ has a non zero imaginary part. Using (\ref{SolutionSystem}), (\ref{Mobius}), we find that 
\[
\mathscr{O}_\mathcal{C}=T-\cfrac{W^{+}(A)W^{-}(A)}{|W^+(A)|^2+|W^-(A)|^2}\,.
\]
If the resonator is located quite far on the left of the obstacle, we have $W^+(A)\approx e^{i\om p}+R_+\,e^{-i\om p}$ as well as $W^-(A)\approx T\,e^{-i\om p}$. This yields
\begin{equation}\label{PhaseVanishes}
\mathscr{O}_c\approx T-\cfrac{T(1+R_+\,e^{-2i\om p})}{2+2\Re e\,(R_+\,e^{-2i\om p})}=\cfrac{T}{2}\,\left(1-\cfrac{i\,\Im m\,(R_+\,e^{-2i\om p})}{1+\Re e\,(R_+\,e^{-2i\om p})}\right).
\end{equation}
The imaginary part of the right hand side of (\ref{PhaseVanishes}) can vanish for a set of values of $p$  which is at most discrete. This is enough to conclude to the existence of some $A=(p,1)$ such that $\Im m\,\mathscr{O}_c\ne0$.
\end{proof}

\noindent This yields the following strategy to do cloaking with two resonators. 
\begin{procedure}\label{ProceCloakingTwo}
Let $\Om$ be a given waveguide as described before (\ref{MainPb}). Assume that $R_\pm=\rho e^{i\theta_\pm}$ with $\rho\in(0;1)$, $\theta_\pm\in [0;2\pi)$ and that $T\notin\mathscr{C}(1/2,1/2)$.\\
1) Add one resonator at the position $A=(p,1)$ and tune its length so that the transmission coefficient in the new geometry belongs to $\mathscr{C}(1/2,1/2)\setminus\{0,1\}$. This is possible according to Proposition \ref{PropositionCloakingOneLig}.\\ 
2) Then add a second resonator following Procedure \ref{ProceCloakingOne} to get a transmission coefficient approximately equal to one.
\end{procedure}

\noindent In Figures \ref{FigCloakElephant}--\ref{FigCloakPenetrable}, we use Procedure \ref{ProceCloakingTwo} to approximately cloak different obstacles. In each of the situations, we indeed observe that the scattered field $u^\eps-\mrm{w}^+$ is approximately exponentially decaying at infinity. For Figures \ref{FigCloakElephant}--\ref{FigCloakMario}, the geometry is described explicitly by the pictures. Let us mention that the configuration of Figure \ref{FigCloakTuyau} is a difficult one because the initial transmission coefficient is very close to zero (see the top picture). In this situation, the length of the resonators must be tuned very precisely. One possible option to facilitate the approach, as explained in Remark \ref{RmkIntermLig}, is to work with one or several intermediate resonators to get a transmission coefficient with a larger modulus and then to apply Procedure \ref{ProceCloakingTwo}. Finally in Figure \ref{FigCloakPenetrable}, we consider the case of a penetrable obstacle coinciding with the fish of Figures \ref{FigFish01}, \ref{FigFish001}. More precisely, we replace the first equation of (\ref{MainPb}) by $\Delta u+\om^2nu=0$ in $\Om$ with an index material $n$ equal to $1$ outside of the inclusion and equal to $6$ inside. This does not enter strictly the framework introduced in Section \ref{SectionSetting} but can be dealt with in a completely similar way.

\begin{figure}[!ht]
\centering
\includegraphics[width=0.92\textwidth]{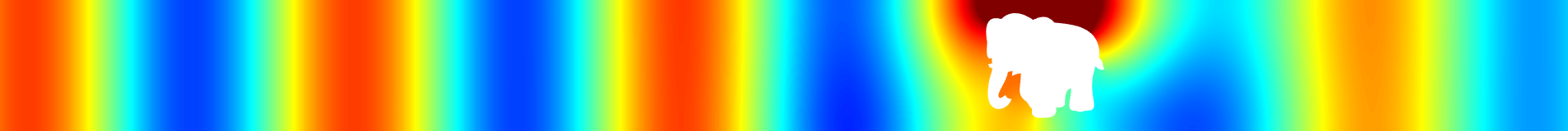}\\[5pt]
\includegraphics[width=0.92\textwidth]{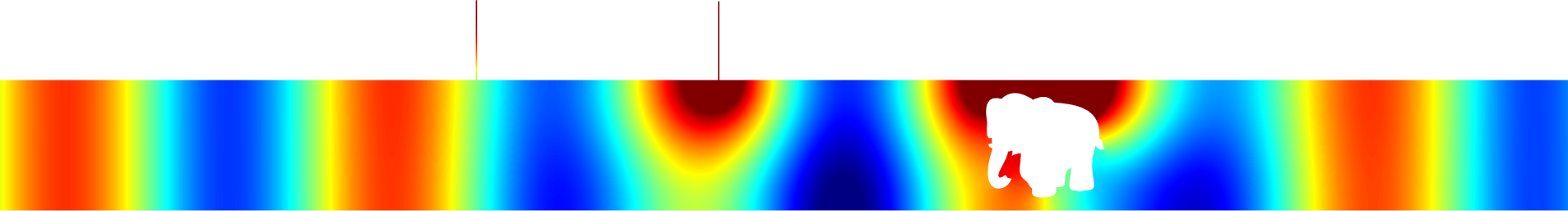}\\[5pt]
\includegraphics[width=0.92\textwidth]{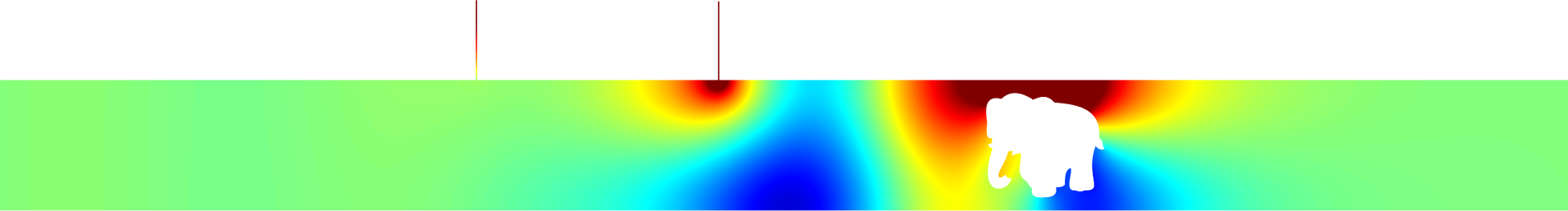}
\caption{Real parts of $W^+$ (top), $u^\eps$ (middle) and $u^\eps-\mrm{w}^+$ (bottom). The resonators are tuned to get approximately $T^{\eps}$ equal to one (Procedure \ref{ProceCloakingTwo}). Here $\eps=0.01$.\label{FigCloakElephant}}
\end{figure}

\begin{figure}[!ht]
\centering
\includegraphics[trim={5cm 0cm 6cm 0cm},clip,width=0.92\textwidth]{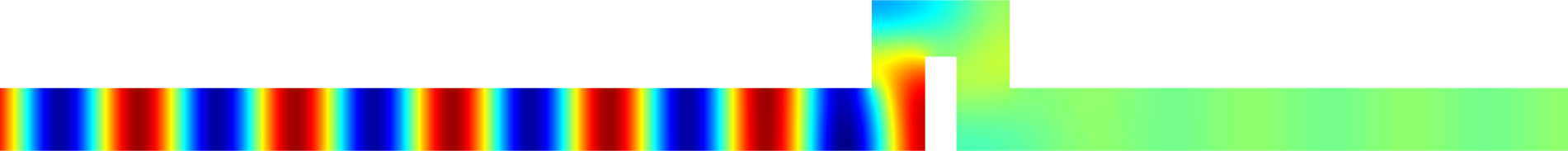}\\[5pt]
\includegraphics[trim={5cm 0cm 6cm 0cm},clip,width=0.92\textwidth]{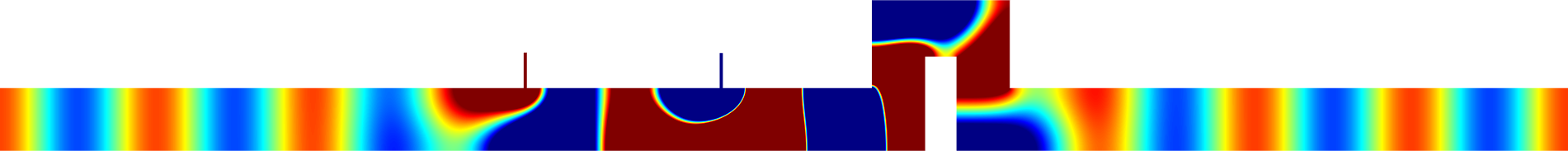}\\[5pt]
\includegraphics[trim={5cm 0cm 6cm 0cm},clip,width=0.92\textwidth]{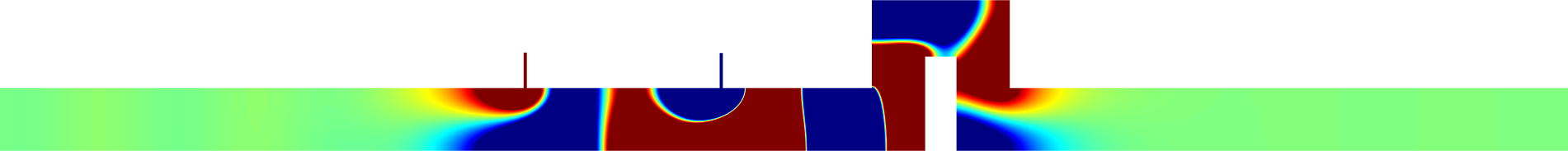}
\caption{Real parts of $W^+$ (top), $u^\eps$ (middle) and $u^\eps-\mrm{w}^+$ (bottom). The resonators are tuned to get approximately $T^{\eps}$ equal to one (Procedure \ref{ProceCloakingTwo}). Here $\eps=0.05$.\label{FigCloakTuyau}}
\end{figure}

\begin{figure}[!ht]
\centering
\includegraphics[width=0.92\textwidth]{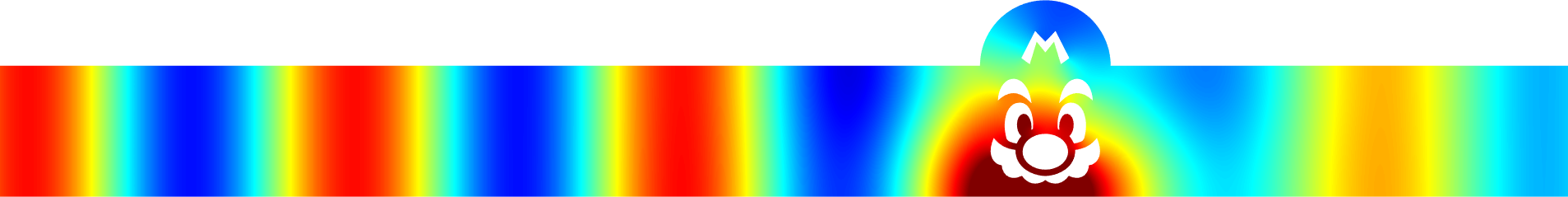}\\[5pt]
\includegraphics[width=0.92\textwidth]{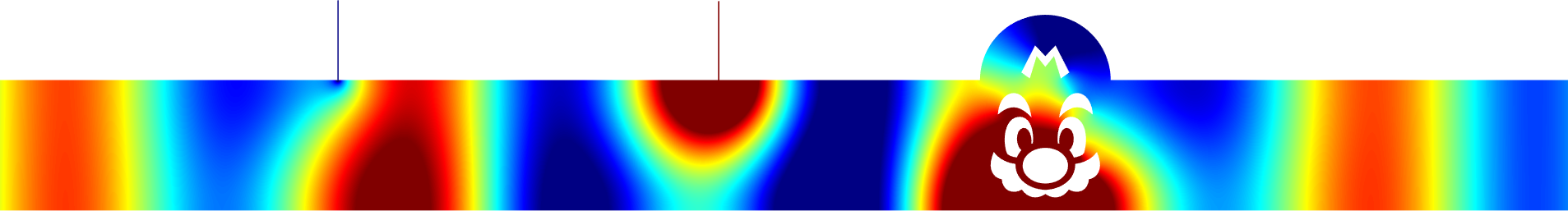}\\[5pt]
\includegraphics[width=0.92\textwidth]{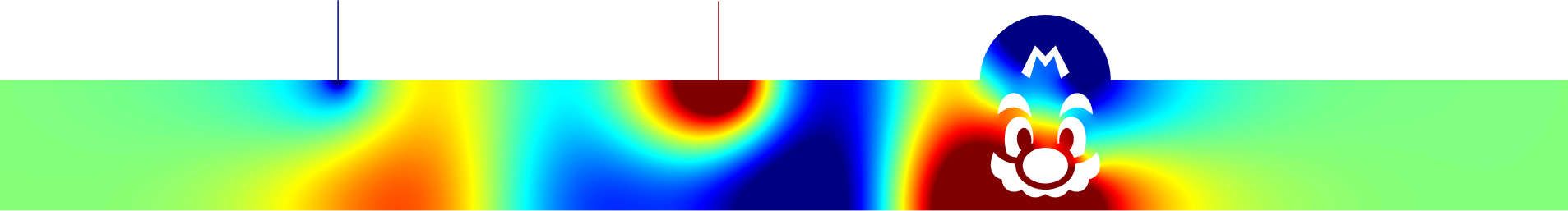}
\caption{Real parts of $W^+$ (top), $u^\eps$ (middle) and $u^\eps-\mrm{w}^+$ (bottom). The resonators are tuned to get approximately $T^{\eps}$ equal to one (Procedure \ref{ProceCloakingTwo}). Here $\eps=0.01$.\label{FigCloakMario}}
\end{figure}

\begin{figure}[!ht]
\centering
\includegraphics[width=0.92\textwidth]{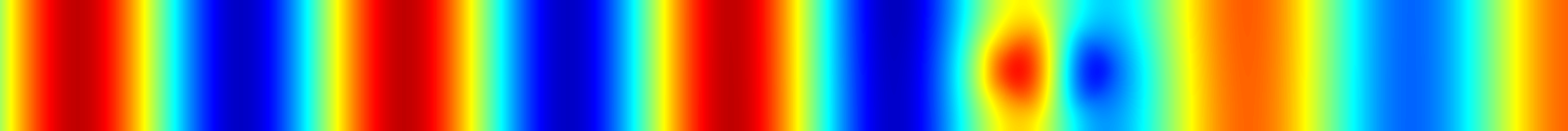}\\[5pt]
\includegraphics[width=0.92\textwidth]{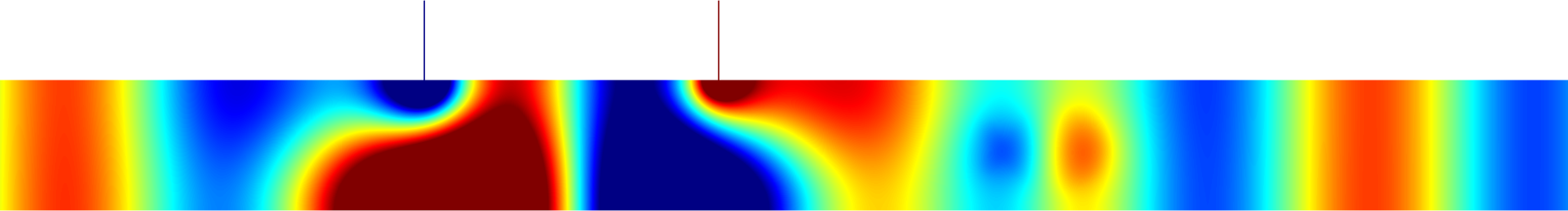}\\[5pt]
\includegraphics[width=0.92\textwidth]{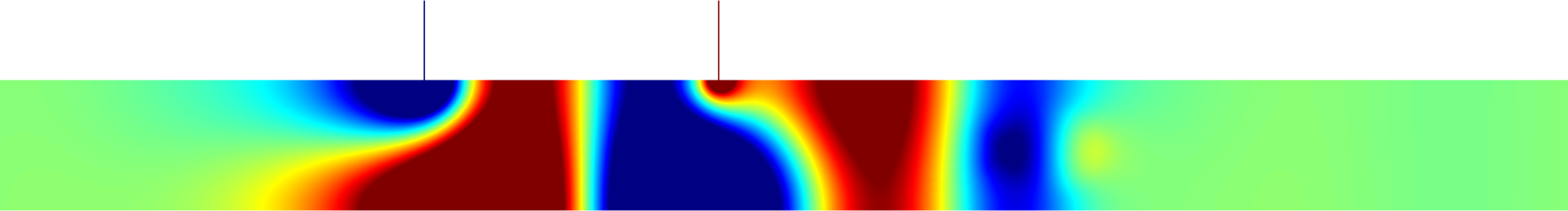}
\caption{Real parts of $W^+$ (top), $u^\eps$ (middle) and $u^\eps-\mrm{w}^+$ (bottom). The resonators are tuned to get approximately $T^{\eps}$ equal to one (Procedure \ref{ProceCloakingTwo}). The obstacle has the same shape as in Figures \ref{FigFish01}, \ref{FigFish001} but here it is penetrable. The index material is set to $n=6$ and $\eps=0.01$.\label{FigCloakPenetrable}}
\end{figure}

\newpage ~\\[-40pt]

\section{Concluding remarks}\label{SectionConclusion}

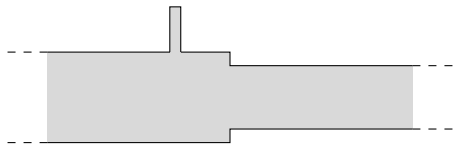
\begin{figure}[!ht]
\centering
\begin{tikzpicture}[scale=1.2]
\draw[fill=gray!30,draw=none](-2,0) rectangle (0,1);
\draw[fill=gray!30,draw=none](0,0.15) rectangle (2,0.85);
\draw (-2,0)--(0,0)--(0,0.15)--(2,0.15);
\draw (-2,1)--(0,1)--(0,0.85)--(2,0.85);
\draw[fill=gray!30,draw=none](-0.66,0.9) rectangle (-0.54,1.5);
\draw (-0.66,1)--(-0.66,1.5)--(-0.54,1.5)-- (-0.54,1);
\draw[dashed] (-2,1)--(-2.5,1);
\draw[dashed] (-2,0)--(-2.5,0);
\draw[dashed] (2,0.15)--(2.5,0.15);
\draw[dashed] (2,0.85)--(2.5,0.85);
\end{tikzpicture}
\caption{A device where the conclusions drawn in this article do not apply directly. \label{InoperantDomain}} 
\end{figure}

\noindent $i)$ We considered straight vertical thin resonators to simplify the presentation and to limit the complexity of the notation. We could have worked similarly  with resonators coinciding at the limit $\eps\to0$ with some smooth curves, we would have observed the same phenomena. What matters is the lengths of the thin resonators. Additionally, the orientation of the resonators (if they are not vertical) plays no major role at first order in the asymptotic (see more details in \cite{ChNaTa20}).\\ 
\newline
$ii)$ What was done in 2D here could be adapted in 3D. The asymptotic expansion would be different, in particular because the equivalent of the function $Y^1$ introduced in (\ref{PolyGrowth}) has a different behaviour at infinity in 3D, but the methodology would be the same (see \cite{NaCh21a,NaCh21b} for related works). \\
\newline
$iii)$ On the contrary, the approach proposed in this article is very specific to Neumann Boundary Conditions (BCs) and cannot be adapted for Dirichlet BCs. Indeed, with Dirichlet BCs nothing enters into the thin resonators and almost all of the incoming energy is backscattered. Therefore, to do cloaking with Dirichlet BCs, it is necessary to find a different idea.\\
\newline
$iv)$ In Figure \ref{FigZeroT}, we have seen that we can work in geometries which, outside of the obstacle, do not necessarily coincide with the reference strip. In particular, bended waveguides can be considered. However the incoming branch and the outgoing branch must have the same width. The situation where this condition is not met (see Figure \ref{InoperantDomain} and for example the articles \cite{CaNR12,CaNR13}) can be studied with a completely similar approach but the results will be different. For example, nothing guarantee that almost zero reflection can be obtained with a single resonator.\\
\newline
$v)$ The question of working at higher wavenumber, so that several modes can propagate, remains largely open. In this case, to cloak the obstacle, it is necessary to control more than two scattering coefficients. A natural idea is to increase the number of thin resonators to get more means of action. In this situation, the asymptotic analysis can be adapted quite directly from what has been done in this work. But then we  are led to work on an algebraic system which seems at first glance hard to handle (extrapolate from (\ref{system1Two})). In particular, coupling effects do not facilitate the analysis.\\
\newline
$vi)$ The methodology proposed in this article allows us to do approximate cloaking, that is to achieve $T^{\eps}=1+o(\eps)$ as $\eps$ tends to zero. A natural question, at least from a theoretical point of view, is ``can we get $T^{\eps}=1$ exactly?''. This may be doable by adapting some arguments presented in \cite{BoCN18}. Let us try to summarize the idea. By adding three non resonant thin chimneys, as shown in \cite{BoCN18}, we could be able to cancel the term at order $\eps$ to obtain $T^{\eps}=1+o(\eps^2)$. To get $T^{\eps}=1$, we need to kill all the terms in the asymptotic and this is not simple because at order $p\ge2$, the dependence on the geometrical parameters of the thin chimneys becomes non linear. But it was shown in \cite{BoCN18} (see also \cite{na546} for a similar approach) that one can perturb the length of the thin chimneys, with a perturbation determined by solving a fixed point problem, which permits to tackle the problem of the non linearity, to achieve $T^{\eps}=1$. The context of the present article however is a bit different from \cite{BoCN18} and the complete justification of the scheme must be studied carefully. Results of \cite{BeBC21} to prove the invertibility of some differentials involved in the process may be useful.\\
\newline
$vii)$ The question of mimicking (see \cite{BeBC21} in the context of thermal cloaking) the scattering properties at infinity, that is to perturb the geometry so that the scattering coefficients become approximately equal to those of a given obstacle, may be studied with the tools presented in this work. \\
\newline
$viii)$ Assume that the thin resonators have been tuned to be resonant at a certain wavenumber $\om_1\in(0;\pi)$. Now if we work at $\om_2\ne\om_1$, in general the thin resonators will only perturb the field $u^{\eps}$, and so the scattering coefficients, by a term of order $\eps$. This is interesting because it allows us to decouple the action of the thin resonators at order $1$. As a consequence, by working with $2p$ thin resonators, we can approximately cloak any obstacle at the discrete collection of wavenumbers $\{\om_1,\om_2,\dots,\om_p\}\subset(0;\pi)^{p}$. Of course imposing invisibility for a continuum of wavenumbers is another question, which may be impossible to solve, see the related works \cite{SoGK07,Norr15,MoAl16,CaMi17,Norr18}.

\section*{Appendix: auxiliary results}

\textbf{Proof of Proposition \ref{propositionStructure}}. We reproduce the material of \cite[Proposition 3.4]{BeBC21}. Looking at the behaviour of $\mathbb{S}\overline{W}-W$ for $|x|>d$ and using that $\mathbb{S}$ is unitary (see relations (\ref{RelConsNRJ})), one finds that $\mathbb{S}\overline{W}-W$ is a vector of functions which solve the homogeneous problem (\ref{MainPb}) and which are exponentially decaying at infinity. In other words, $\mathbb{S}\overline{W}-W$ is a vector of trapped modes. But since by definition the $W^{\pm}$ are orthogonal to trapped modes for the $\mL^2(\Om)$ inner product, we deduce that $\mathbb{S}\overline{W}=W$.

\begin{lemma}\label{LemmaCReal}
The constant $C_{\Xi}$ appearing in the decomposition (\ref{PolyGrowth}) of the function $Y^1$ is real.
\end{lemma}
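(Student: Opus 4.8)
The plan is to exploit the uniqueness characterisation of $Y^1$. Recall that $Y^1$ is defined as the unique function solving the Neumann problem (\ref{PbBoundaryLayer}) that admits a representation of the shape (\ref{PolyGrowth}), namely behaving like $\xi_y+(\text{const})+O(e^{-\pi\xi_y})$ in the semi-strip $\Xi^+$ and like $\tfrac{1}{\pi}\ln(1/|\xi|)+O(1/|\xi|)$ in the half-plane $\Xi^-$; this is \cite[Lemma 4.1]{BoCN18}, quoted in the text. I would first observe that $\overline{Y^1}$ solves exactly the same problem: the Laplacian and the Neumann condition $\partial_\nu Y=0$ have real coefficients and vanishing right-hand side, so complex conjugation maps solutions of (\ref{PbBoundaryLayer}) to solutions.

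Next I would check that $\overline{Y^1}$ still has a representation of the form (\ref{PolyGrowth}). Indeed, the growing term $\xi_y$ and the logarithmic term $\tfrac{1}{\pi}\ln(1/|\xi|)$ (together with its coefficient) are real, whereas the remainders transform as $\overline{O(e^{-\pi\xi_y})}=O(e^{-\pi\xi_y})$ and $\overline{O(1/|\xi|)}=O(1/|\xi|)$. Hence $\overline{Y^1}$ satisfies (\ref{PbBoundaryLayer}) and has a representation of the shape (\ref{PolyGrowth}), but with $\overline{C_\Xi}$ in place of $C_\Xi$. By the uniqueness part of the definition of $Y^1$, this forces $\overline{Y^1}=Y^1$, and reading off the semi-strip expansion gives $\overline{C_\Xi}=C_\Xi$, i.e. $C_\Xi\in\mathbb{R}$.

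There is really no obstacle here; the only point requiring a word of care is that the invoked uniqueness genuinely pins down the additive constant and not merely $Y^1$ modulo the constant solution $Y^0=1$. This is clear: two solutions of (\ref{PbBoundaryLayer}) with representation (\ref{PolyGrowth}) differ by a harmonic function on $\Xi$ satisfying the Neumann condition that is bounded in $\Xi^+$ and tends to $0$ in $\Xi^-$ (the remainder there is $o(1)$, so no constant is allowed), hence is identically zero. If one instead prefers an argument in the spirit of the flux computations used later for $\Gamma$ (see around (\ref{IntegRho1})), one can apply Green's identity to $Y^1$ and $\overline{Y^1}$ on the truncated domain $\Xi_\Lambda:=\Xi\cap\{|\xi|<\Lambda\}$: the contributions of $\partial\Xi$ vanish by the Neumann condition, the contribution of the artificial arc inside $\Xi^-$ tends to $0$ as $\Lambda\to+\infty$ (the leading logarithmic terms are real and cancel, and the cross terms are $o(1)$ after integrating against the arc length), while the artificial segment $\{\xi_y=\Lambda,\ |\xi_x|<1/2\}$ in $\Xi^+$ contributes $\overline{C_\Xi}-C_\Xi+o(1)$; letting $\Lambda\to+\infty$ yields $\overline{C_\Xi}=C_\Xi$.
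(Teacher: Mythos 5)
Your proposal is correct. Your primary argument — conjugate $Y^1$, observe that $\overline{Y^1}$ solves the same real-coefficient Neumann problem with the same normalised expansion (real growing terms $\xi_y$ and $\pi^{-1}\ln(1/|\xi|)$, no additive constant in the half-plane), and invoke the uniqueness of $Y^1$ to conclude $\overline{Y^1}=Y^1$ and hence $\overline{C_\Xi}=C_\Xi$ — is genuinely different from the paper's proof, which is exactly the Green's-identity computation you relegate to your final sentences: the authors integrate $(Y^1-\overline{Y^1})\Delta Y^1-\Delta(Y^1-\overline{Y^1})Y^1$ over a truncated domain $\Xi^\kappa$, let $\kappa\to+\infty$, and read off $C_\Xi-\overline{C_\Xi}=0$ from the flux through the cross-section of the semi-strip. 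The trade-off is clear: your conjugation argument is computation-free but leans on the uniqueness statement of \cite[Lemma 4.1]{BoCN18}, and specifically on the point you rightly flag — that the normalisation (remainder $O(1/|\xi|)$, hence no free additive constant in $\Xi^-$) pins down $Y^1$ exactly and not merely modulo $Y^0=1$; your justification of that point (the difference of two such solutions is a bounded Neumann-harmonic function decaying in the half-plane, hence a constant, hence zero) is what makes the argument complete. The paper's flux argument needs only the existence of the expansion (\ref{PolyGrowth}), not its uniqueness, and has the stylistic advantage of being the same computation reused immediately afterwards to prove $\Im m\,(\om\Gamma)=(|W^+(A)|^2+|W^-(A)|^2)/4$ in Lemma \ref{lemmaRelConstants}, where no conjugation symmetry is available. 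Your estimate of the boundary contributions in the flux version (the real logarithmic leading terms cancel on the arc, the cross terms are $o(1)$, and the segment $\{\xi_y=\Lambda\}$ contributes $\overline{C_\Xi}-C_\Xi+o(1)$) is accurate.
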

\begin{proof}
Since there holds $\Delta Y^1=0$ in $\Xi$, for all $\kappa>0$, we have 
\[
0=\int_{\Xi^\kappa} (Y^1-\overline{Y^1})\Delta Y^1-\Delta(Y^1-\overline{Y^1})Y^1\,d\xi_xd\xi_y
\]
with $\Xi^\kappa:=\{(\xi_x,\xi_y)\in\Xi,\,\xi_y<0\mbox{ and }|\xi|<\kappa\}\cup \{(\xi_x,\xi_y)\in(-1/2;1/2)\times[0;\kappa)\}$. Integrating by parts and taking the limit $\kappa\to+\infty$, we get $C_\Xi-\overline{C_\Xi}=0$. This shows that $C_\Xi$ is real.
\end{proof}

\begin{lemma}\label{lemmaRelConstants}
The constant $\Gamma$ corresponding to the constant behaviour of $\gamma$ at $A$ (see (\ref{DefGammaM})) is such that 
\[
\Im m\,(\om\Gamma)=(|W^+(A)|^2+|W^-(A)|^2)/4. 
\]
\end{lemma}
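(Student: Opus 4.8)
The plan is to extract the identity $\Im m\,(\om\Gamma)=(|W^+(A)|^2+|W^-(A)|^2)/4$ from a Green-type integral identity, exploiting the fact that $\gamma$ is an outgoing solution with a logarithmic singularity at $A$. The natural pairing is $\gamma$ against its own complex conjugate $\overline{\gamma}$ on a truncated domain, so that the interior terms cancel by the Helmholtz equation and only boundary contributions remain: one coming from the artificial truncation at $|x|=\kappa$ (a far-field contribution) and one coming from a small circle $r^A=1/\kappa$ around the singular point $A$ (a near-field contribution involving $\Gamma$). This is exactly the technique already used in the excerpt for (\ref{IntegRho1}) and in the proof of Lemma \ref{LemmaCReal}.

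First I would fix the punctured truncated domain $\Om^\kappa:=\{(x,y)\in\Om\,:\,|x|<\kappa\ \text{and}\ r^A>1/\kappa\}$ and write
\[
0=\int_{\Om^\kappa}\big(\overline{\gamma}\,(\Delta\gamma+\om^2\gamma)-\gamma\,(\Delta\overline{\gamma}+\om^2\overline{\gamma})\big)\,dz
=\int_{\partial\Om^\kappa}\big(\overline{\gamma}\,\partial_\nu\gamma-\gamma\,\partial_\nu\overline{\gamma}\big)\,d\sigma,
\]
using that $\Delta\gamma+\om^2\gamma=0$ in $\Om$ (the source $\delta_A$ on $\partial\Om$ is handled below). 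The boundary $\partial\Om^\kappa$ splits into three pieces: the part of $\partial\Om$ where $\partial_\nu\gamma=0$ (except near $A$), which contributes nothing away from $A$; the two vertical cross-sections at $x=\pm\kappa$; and the small arc $\{r^A=1/\kappa\}$. On the small arc, inserting the expansion (\ref{DefGammaM}), namely $\gamma=\pi^{-1}\ln(1/r^A)+\Gamma+O(r^A)$ with $\partial_{r^A}\gamma=-1/(\pi r^A)+O(1)$, a standard computation (length of arc $\sim \pi/\kappa$ on the half-disk side, times integrand $\sim$ terms in $\ln\kappa$ and $\Gamma$) gives, in the limit $\kappa\to+\infty$,
\[
\lim_{\kappa\to+\infty}\int_{\{r^A=1/\kappa\}}\big(\overline{\gamma}\,\partial_\nu\gamma-\gamma\,\partial_\nu\overline{\gamma}\big)\,d\sigma
= \Gamma-\overline{\Gamma}=2i\,\Im m\,\Gamma,
\]
the logarithmic terms cancelling because they are real and the self-interaction $\ln\cdot\ln$ terms being symmetric. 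On the cross-sections $x=\pm\kappa$, since $\gamma$ is outgoing it behaves like a multiple of $\mrm{w}^{\mp}=e^{\mp i\om x}$ plus exponentially small terms; writing $\gamma\sim s_\pm e^{\pm i\om x}$ as $x\to\pm\infty$ and computing $\int_0^1(\overline{\gamma}\,\partial_x\gamma-\gamma\,\partial_x\overline{\gamma})\,dy$ at each end gives $2i\om(|s_+|^2+|s_-|^2)$ in the limit. The key remaining point is to identify the far-field amplitudes $s_\pm$ of $\gamma$ with $W^\mp(A)$ (up to the right constant): this follows from a reciprocity/Green identity pairing $\gamma$ against $W^+$ and $W^-$ — precisely the computation already carried out in the excerpt between (\ref{IntegRho1}) and (\ref{equation1}), which shows that the radiated amplitude of $\gamma$ in the $\mrm{w}^\pm$ direction is proportional to $W^\pm(A)$. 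Concretely, from (\ref{SecondDecompo})–(\ref{equation1}) with $C^A=a\om$ one reads off that $\gamma$ contributes $iW^+(A)/2$ to the reflection and $iW^-(A)/2$ to the transmission, i.e. $s_- = iW^+(A)/(2\om)\cdot(\text{const})$ and similarly for $s_+$; tracking the normalisation $2i\om|s_\pm|^2$ converts $2i\om(|s_+|^2+|s_-|^2)$ into $i(|W^+(A)|^2+|W^-(A)|^2)/(2\om)$.

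Combining the two limits, the surface identity becomes
\[
0 = 2i\,\Im m\,\Gamma \;+\; \frac{i}{2\om}\big(|W^+(A)|^2+|W^-(A)|^2\big),
\]
wait — the sign must be checked carefully: the outward normal on $\{r^A=1/\kappa\}$ points toward $A$, which flips a sign relative to the naive computation, and this is exactly what produces $\Im m\,(\om\Gamma)=+(|W^+(A)|^2+|W^-(A)|^2)/4$ rather than a minus sign. Multiplying through by $\om/(2i)$ and rearranging yields the claimed relation $\Im m\,(\om\Gamma)=(|W^+(A)|^2+|W^-(A)|^2)/4$. I expect the main obstacle to be precisely this bookkeeping of signs and normalisation constants: getting the orientation of the small-circle normal right, keeping the factor $1/\pi$ from the fundamental-solution-type singularity consistent with the $2i\om$ weights from the propagating modes, and correctly identifying the proportionality between the far-field amplitudes of $\gamma$ and the values $W^\pm(A)$. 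Everything else — the Green identity, the cancellation of real logarithmic terms, the vanishing of the Neumann part of $\partial\Om$ — is routine and already modelled on the arguments in the excerpt.
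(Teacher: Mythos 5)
Your proposal is correct and follows essentially the same route as the paper's proof: a Green identity pairing $\gamma$ with $\overline{\gamma}$ on the truncated domain to relate $\Im m\,\Gamma$ to the far-field amplitudes $s_\pm$, combined with the reciprocity identity (pairing $\gamma$ with $W^{\pm}$) giving $s_{\pm}=iW^{\mp}(A)/(2\om)$. The sign and normalisation bookkeeping you flag does work out exactly as you assert, so no further changes are needed.
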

\begin{proof}
Since the function $\gamma$ is outgoing, we have the expansion $\gamma=s_{\pm}\mrm{w}^\pm+\tilde{\gamma}$ for $\pm x >d$ where $s_{\pm}\in\Cplx$ and where $\tilde{\gamma}$ is exponentially decaying at infinity. Integrating by parts in
\[
0=\int_{\Om^\kappa}(\Delta \gamma +\omega^2\gamma)W^{\pm}-\gamma\,(\Delta W^{\pm} +\omega^2W^{\pm})\,dz,
\]
and taking the limit $\kappa\to+\infty$ as in (\ref{IntegRho1}), we obtain 
\begin{equation}\label{relationCoefSca}
s_{\pm}=iW^{\mp}(A)/(2\om).
\end{equation}
On the other hand, integrating by parts in
\[
0=\int_{\Om^\kappa}(\Delta \gamma +\omega^2\gamma)\overline{\gamma}-\gamma\,(\Delta \overline{\gamma} +\omega^2\overline{\gamma})\,dz,
\]
and taking again the limit $\kappa\to+\infty$, we obtain $2\om(|s_{-}|^2+|s_{+}|^2)-2\Im m\,\Gamma=0$. From (\ref{relationCoefSca}), this yields the desired result.
\end{proof}

\begin{lemma}\label{lemmaRelConstantsTwo}
Let $\gamma_j$, $j=1,2$, be the functions introduced in (\ref{DefGammai}). We have $\gamma_1(A_2)=\gamma_2(A_1)$. 
\end{lemma}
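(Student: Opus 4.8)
The statement to prove is Lemma~\ref{lemmaRelConstantsTwo}: the symmetry $\gamma_1(A_2)=\gamma_2(A_1)$, where $\gamma_j$ solves $(\Delta+\om^2)\gamma_j=0$ in $\Om$ with $\partial_\nu\gamma_j=\delta_{A_j}$ on $\partial\Om$ and is outgoing. This is a reciprocity-type identity, so the plan is a Green's formula argument.

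The plan is to integrate by parts in the truncated domain $\Om^\kappa:=\{(x,y)\in\Om\,,\ |x|<\kappa\ \text{and}\ r^{A_1}>1/\kappa\ \text{and}\ r^{A_2}>1/\kappa\}$, starting from
\[
0=\int_{\Om^\kappa}(\Delta\gamma_1+\om^2\gamma_1)\gamma_2-\gamma_1(\Delta\gamma_2+\om^2\gamma_2)\,dz,
\]
which holds because both integrands vanish pointwise away from $A_1,A_2$. Green's formula turns the left side into boundary terms over $\partial\Om^\kappa$, which splits into three pieces: the vertical truncation segments at $x=\pm\kappa$, the small circle $\{r^{A_1}=1/\kappa\}$ around $A_1$, the small circle $\{r^{A_2}=1/\kappa\}$ around $A_2$, and the part of $\partial\Om$ that remains. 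On the remaining part of $\partial\Om$, the normal derivatives $\partial_\nu\gamma_j$ are supported only at $A_1,A_2$, which have been excised, so that contribution is zero.

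Next I would let $\kappa\to+\infty$ and track the three surviving contributions. On the vertical cuts $x=\pm\kappa$, write $\gamma_j=s^j_\pm \mrm{w}^\pm+\tilde\gamma_j$ with $\tilde\gamma_j$ exponentially decaying (as in the proof of Lemma~\ref{lemmaRelConstants}); the propagating parts $s^j_\pm e^{\pm i\om x}$ are purely outgoing for both functions, so the Wronskian-type boundary term $\gamma_1\partial_x\gamma_2-\gamma_2\partial_x\gamma_1$ built from two outgoing waves cancels in the limit (the cross terms $\mrm{w}^+\partial_x\mrm{w}^+$ contributions from $\gamma_1$ and $\gamma_2$ are equal and subtract to zero), and the evanescent remainders kill anything else. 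On the small circle around $A_1$, using the expansion $\gamma_1=\pi^{-1}\ln(1/r^{A_1})+\Gamma_1+O(r^{A_1})$ and the fact that $\gamma_2$ is smooth near $A_1$ with value $\gamma_2(A_1)$, the standard computation of $\int_{r^{A_1}=1/\kappa}(\gamma_1\partial_\nu\gamma_2-\gamma_2\partial_\nu\gamma_1)$ yields, in the limit, $\gamma_2(A_1)$ (the logarithmic singularity against the normal derivative of the regular function produces the constant, with the sign fixed by the orientation of the boundary of the excised disk). Symmetrically, the circle around $A_2$ contributes $-\gamma_1(A_2)$. Adding up gives $\gamma_2(A_1)-\gamma_1(A_2)=0$, which is the claim.

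The main obstacle is bookkeeping the small-circle limits with the correct signs and constants: one must carefully use the normalization $\frac1\pi\ln\frac1{r}$ in $(\ref{DefGammaMi})$, the fact that $\int_{\partial B_\delta}\partial_\nu(\ln\frac1r)\,ds$ equals a fixed constant independent of $\delta$ while $\int_{\partial B_\delta}\ln\frac1r\,\partial_\nu(\text{smooth})\,ds\to0$, and keep straight that the normal on $\partial\Om^\kappa$ points \emph{into} the excised disks. A secondary point requiring a line of justification is that the $x=\pm\kappa$ terms vanish: this is exactly the same radiation-condition cancellation already used in Lemmas~\ref{lemmaRelConstants} and in $(\ref{IntegRho1})$, so it can be invoked by analogy rather than redone in detail.
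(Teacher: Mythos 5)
Your proposal is correct and follows exactly the paper's argument: the authors also apply Green's formula to $0=\int_{\Om^\kappa}(\Delta \gamma_1 +\omega^2\gamma_1)\gamma_2-\gamma_1(\Delta \gamma_2 +\omega^2\gamma_2)\,dz$ and let $\kappa\to+\infty$, leaving the small-half-disk and radiation-condition bookkeeping implicit. Your write-up simply fills in those details (the outgoing far-field cancellation and the unit flux of the $\pi^{-1}\ln(1/r^{A_j})$ singularity against the regular value of the other function), and does so with the correct signs.
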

\begin{proof}
Integrating by parts in
\[
0=\int_{\Om^\kappa}(\Delta \gamma_1 +\omega^2\gamma_1)\gamma_2-\gamma_1\,(\Delta \gamma_2 +\omega^2\gamma_2)\,dz,
\]
and taking the limit $\kappa\to+\infty$, we find $\gamma_1(A_2)=\gamma_2(A_1)$. 
\end{proof}

\section*{Acknowledgments} 
The work of S.A. Nazarov was supported by the Ministry of Science and Higher Education of Russian Federation within the framework of the Russian State Assignment under contract No.  FFNF-2021-0006.
The authors wish to thank Lorenzo Scaglione who worked on parts of this article during its internship, in particular on the result leading to Proposition \ref{PropositionZeroRefSym}.

\bibliography{Bibli}
\bibliographystyle{plain}
\end{document}